
\documentclass[leqno,10pt]{article}

\usepackage{amssymb, amsmath, amsthm, txfonts, mathrsfs}

\usepackage[all]{xy}

\theoremstyle{plain}
\newtheorem{theorem}{Theorem}[section]
\newtheorem*{theorem*}{Theorem}

\newtheorem{proposition}[theorem]{Proposition}

\newtheorem{lemma}[theorem]{Lemma}
\newtheorem{corollary}[theorem]{Corollary}

\theoremstyle{remark}

\theoremstyle{definition}
\newtheorem{definition}[theorem]{Definition}

\setlength{\oddsidemargin}{0mm}
\setlength{\evensidemargin}{0mm}
\setlength{\oddsidemargin}{0mm}
\setlength{\evensidemargin}{0mm}
\setlength{\topmargin}{0mm}
\setlength{\topskip}{0mm}
\setlength{\headsep}{0mm}
\setlength{\headheight}{0mm}
\setlength{\textwidth}{160mm}
\setlength{\textheight}{240mm}


\newcommand{\Att}{A(\!(t)\!)}
\newcommand{\cA}{\mathscr{A}}
\newcommand{\cCH}{\mathscr{C}\!H}
\newcommand{\Cf}{\textit{cf.}\;}
\newcommand{\CH}{\operatorname{CH}}
\newcommand{\Char}{\operatorname{char}}
\newcommand{\Coker}{\operatorname{Coker}}
\renewcommand{\d}{\operatorname{\partial}}
\newcommand{\dlog}{\operatorname{d\!\log}}
\newcommand{\dP}{\d_P}

\newcommand{\Ga}{\mathbb{G}_{a}}

\newcommand{\Gm}{\mathbb{G}_{m}}
\newcommand{\gh}{\operatorname{gh}}
\newcommand{\Gh}{\operatorname{Gh}}

\newcommand{\Id}{\operatorname{Id}}

\renewcommand{\Im}{\operatorname{Im}}
\newcommand{\inj}{\hookrightarrow}
\newcommand{\isomto}{\stackrel{\simeq}{\longrightarrow}}
\newcommand{\kbar}{\overline{k}}
\newcommand{\Ker}{\operatorname{Ker}}
\newcommand{\kP}{k(P)}
\newcommand{\kt}{k^{\times}}
\newcommand{\ktt}{k(\!(t)\!)}
\newcommand{\kPtt}{\kP(\!(t)\!)}
\newcommand{\Res}{\operatorname{Res}}
\newcommand{\KP}{K_P}
\newcommand{\Kt}{K^{\times}}
\newcommand{\KPt}{K_P^{\times}}

\newcommand{\llr}{\longleftrightarrow}
\newcommand{\M}{\mathscr{M}}
\renewcommand{\O}{\mathcal{O}}

\newcommand{\onto}[1]{\stackrel{#1}{\to}}
\newcommand{\onlong}[1]{\stackrel{#1}{\longrightarrow}}
\newcommand{\ol}[1]{\overline{#1}}
\newcommand{\Ohat}{\widehat{\O}}
\newcommand{\otimesM}{\!\stackrel{M}{\otimes}\!}
\newcommand{\otimesZ}{\otimes_{\Z}}

\newcommand{\Spec}{\operatorname{Spec}}
\newcommand{\surj}{\twoheadrightarrow}
\newcommand{\s}[1]{\{#1\}}

\newcommand{\TCH}{\operatorname{TCH}}
\newcommand{\Tr}{\operatorname{Tr}}

\newcommand{\ur}{\mathrm{ur}}
\newcommand{\wt}[1]{\widetilde{#1}}
\newcommand{\wh}[1]{\widehat{#1}}
\newcommand{\Wm}{\mathbb{W}_{m}}
\newcommand{\W}{\mathbb{W}}

\newcommand{\WmA}{\Wm(A)}

\newcommand{\WS}{\W_{S}}

\newcommand{\WSA}{\WS(A)}
\newcommand{\WSB}{\WS(B)}

\newcommand{\WSE}{\WS(E)}
\newcommand{\WSF}{\WS(F)}

\newcommand{\WSOmega}{\WS\Omega}
\newcommand{\WSOmegak}{\WS\Omega_k}
\newcommand{\WSOmegaE}{\WS\Omega_E}
\newcommand{\WSOmegaF}{\WS\Omega_F}

\newcommand{\WSOmegaEq}{\WSOmegaE^{q}}
\newcommand{\WSOmegaFq}{\WSOmegaF^{q}}
\newcommand{\WSOmegaEb}{\WSOmegaE^{\bullet}}
\newcommand{\WSOmegaFb}{\WSOmegaF^{\bullet}}

\newcommand{\Z}{\mathbb{Z}}

\def\sn{\smallskip\noindent}

\def\bn{\bigskip\noindent}
\def\ssm{\smallsetminus}

\title{An additive variant of Somekawa's $K$-groups and K\"ahler differentials}
\author{Toshiro Hiranouchi}

\begin{document}
\maketitle

\begin{abstract}
We introduce a Milnor type $K$-group 
associated to commutative algebraic groups over a perfect field. 
It is an additive variant of Somekawa's $K$-group. 
We show that the $K$-group associated to the additive group and 
$q$ multiplicative groups of a field is isomorphic to 
the space of absolute K\"ahler differentials of degree $q$ of the field, 
thus giving us a geometric interpretation 
of the space of absolute K\"ahler differentials. 
We also show that the $K$-group associated to 
the additive group and Jacobian variety of 
a curve is isomorphic to 
the homology group of a certain complex. 

\noindent
{\bf Keywords:} Milnor $K$-theory; Chow groups; K\"ahler differentials

\noindent
{\bf Mathematics subject classification 2010:} 19D45; 14C15

\end{abstract}


\section{Introduction}

S.\ Bloch and H.\ Esnault introduced in \cite{BE03a} 
the additive higher Chow groups $\TCH^p(k,q) = \TCH^p(k,q;1)$  
of a field $k$ in an attempt 
to provide a cycle-theoretic description of the motivic cohomology 
theory of $X\otimes_k k[x]/(x^2)$ for a smooth $k$-scheme $X$.
They showed that when $q=p$, $\TCH^q(k,q) \simeq \Omega_k^{q-1}$ 
where $\Omega_k^{q-1}$ is the space of absolute K\"ahler differentials of $k$. 
This theorem is an analogue of the theorem 
of Nesterenko-Suslin/Totaro that $\CH^q(k,q) \simeq K_q^{M}(k)$, 
where $\CH^q(k,p)$ is the higher Chow group  
and $K_q^M(k)$ is the Milnor $K$-group. 
A reason that the space of K\"ahler differentials appears 
can be found in the calculation of $K_2(k[t]/(t^2))$ 
(\Cf \cite{BE03a}, Sect.\ 2). 
The aim of this note is to develop the analogy 
\[ 
    K_q^{M}(k)\quad \llr\quad \Omega_k^{q-1}
\]
by defining an additive variant of Somekawa's $K$-group \cite{Som90}, 
which corresponds loosely to the additive higher Chow groups above.  
The Milnor $K$-group $K_q^M(k)$ of a field $k$ is defined by the Steinberg relation: 
$\{x_1,\ldots ,x_q\} = 0$ if $x_i + x_j = 0\ (i\neq j)$. 
Although the definition of this relation is simple, 
but in general its geometric meaning is not at all clear. 
K.\ Kato and M.~Somekawa in \cite{Som90} considered to replace this Steinberg relation 
by the following classical theorems in Milnor $K$-theory ({\it e.g.,} \cite{BT73}). 

\sn
{\it The projection formula}:\  
For any finite field extension $k\subset E$, if $x_{i_0} \in E^{\times}$ 
and $x_i \in k^{\times}$ for all $i\neq i_0$, then 
\[
    N_{E/k}(\{x_1,\ldots , x_{i_0} ,\ldots, x_n\}) = \{x_1,\ldots , N_{E/k}(x_{i_0}),\ldots ,x_n\}
    \quad \mbox{in $K_q^M(k)$,}
\]
where $N_{E/k}$ is the norm map. 

\sn
{\it The Weil reciprocity law}:\ 
For the function field $k(C)$ of a projective smooth curve $C$ over $k$, the following sequence 
is a complex: 
\[
    K_{q+1}^M(k(C)) \xrightarrow{\oplus_P \d_P} 
    \bigoplus_{P\in C_0} K_q^M(k(P)) \xrightarrow{\sum_P N_{k(P)/k}} K_q^M(k),
\]
where 
$\d_P$ is the boundary map, $C_0$ is the set of closed points in $C$ and  
$k(P)$ is the residue field at $P$. 

\bn
We define 
\[
    K(k; \overbrace{\Gm,\ldots ,\Gm}^q) := 
    \left(\bigoplus_{E/k:\, \mathrm{finite}} \overbrace{E^{\times} \otimesZ \cdots \otimesZ E^{\times}}^q \right) /R. 
\]
Here, $R$ is the subgroup given by the relations 
of which mimic the statements of the theorems above (see \cite{Som90} for the precise definition). 
This group is universal with respect to these theorems, that is,    
the natural map 
\begin{equation}
    \label{eq:KS}
    K(k; \overbrace{\Gm,\ldots ,\Gm}^q) \to K_q^M(k) 
\end{equation}
annihilates the subgroup $R$ by the projection formula and the Weil reciprocity law,  
and one can show that the map is bijective (\cite{Som90}, Thm.\ 1.4). 
This group can be extended to semi-abelian varieties $G_1,\ldots ,G_q$ over $k$, 
the extension being denoted by $K(k;G_1,\ldots ,G_q)$. 
On the other hand, 
under the assumption that the field $k$ is perfect, 
the residue map  
\[
    \Res_P: \Omega_{k(C)}^{q} \to \Omega_{k}^{q-1}
\]
can take values in $\Omega_{k(P)}^{q-1}$ at a closed point $P$ of $C$, 
rather than $\Omega_k^{q-1}$, 
just as the boundary map on the Milnor $K$-group 
$\dP:K_{q+1}^M(k(C)) \to K_q^M(\kP)$ does. 
From the residue theorem (Thm.\ \ref{thm:res}), 
it follows that 
the refined residue map $\dP$ and the trace maps induce a complex   
\begin{equation}
    \label{eq:rt}
    \Omega_{k(C)}^q \xrightarrow{\oplus_P \d_P} \bigoplus_{P\in C_0} \Omega_{k(P)}^{q-1} 
    \xrightarrow{\sum_P \Tr_{k(P)/k}} \Omega^{q-1}_{k}. 
\end{equation}
We consider this as an additive version of the Weil reciprocity law cited above. 
This observation leads us to define an additive variant of the Somekawa $K$-group which 
allows an additive group $\Ga$ over $k$ as a coefficient. 
More precisely, let 
$\Wm$  be the big Witt group of length $m$ and 
$G_1,\ldots ,G_q$ split semi-abelian varieties over a perfect field $k$. 
Here, a split semi-abelian variety is a product  
of an abelian variety and a torus. 
We define the group 
\[
    K(k;\Wm, G_1,\ldots,G_q) := 
    \left(\bigoplus_{E/k:\,\mathrm{finite}} 
    \Wm(E) \otimes_{\Z} G_1(E) \otimes_{\Z} \cdots \otimes_{\Z} G_q(E)\right)/R
\] 
which we also call the \textit{Somekawa $K$-group}, 
where $R$ is the subgroup 
which produces ``the projection formula'' and 
``the Weil reciprocity law'' 
(= the residue theorem as in (\ref{eq:rt})) on $K(k;\Wm,G_1,\ldots,G_q)$ 
in a way similar to that in Kato and Somekawa's original $K$-groups, 
\Cf Definition \ref{def:Somekawa}.
%
However, the projection formula above implies that 
our Somekawa $K$-group becomes trivial when $k$ 
has positive characteristic and $q\ge1$ (Lem.\ \ref{lem:trivial}). 
Thus our main interest here is in the case of characteristic $0$. 
Let $\Ga$ denote the additive group and $\Gm$ the multiplicative group. 
For $\W_{1} =\Ga$ and $G_i = \Gm$ 
for $i\ge 1$, 
the Somekawa $K$-group has the following description:

\begin{theorem}[Thm.\ \ref{thm:main}]
    As $k$-vector spaces, we have 
    \[
        K(k;\Ga,\overbrace{\Gm,\ldots,\Gm}^{q-1}) \isomto \Omega_k^{q-1}.
    \]
\end{theorem}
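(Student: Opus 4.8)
The plan is to construct maps in both directions between $K(k;\Ga,\Gm^{\,q-1})$ and $\Omega_k^{q-1}$ and show they are mutually inverse. The target $\Omega_k^{q-1}$ is generated by symbols $a\,\dlog b_1 \wedge \cdots \wedge \dlog b_{q-1}$ with $a,b_i \in k$ (using that $\dlog b = db/b$ generates $\Omega_k^1$ multiplicatively over the relevant field), so the natural candidate for the map $K(k;\Ga,\Gm^{\,q-1}) \to \Omega_k^{q-1}$ sends a generating symbol $\{a, b_1, \ldots, b_{q-1}\}_{E/k}$ supported on a finite extension $E/k$ to $\Tr_{E/k}(a\,\dlog b_1 \wedge \cdots \wedge \dlog b_{q-1})$, where $\Tr_{E/k}\colon \Omega_E^{q-1}\to\Omega_k^{q-1}$ is the trace on Kähler differentials. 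The first task is to check this is well-defined, \emph{i.e.}, that it kills the relation subgroup $R$.

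First I would verify compatibility with the two families of relations defining $R$. The projection-formula relations go to zero because the trace is functorial and $\Ga(E)=E$ together with $\Gm(E)=E^\times$ interact with $\Tr_{E/k}$ exactly via the projection formula for Kähler differentials (the trace is $\Omega_k^{\bullet}$-linear). The Weil-reciprocity (residue-theorem) relations are precisely annihilated by the complex (\ref{eq:rt}): a relation coming from a function field $k(C)$ maps, under $\sum_P \Tr_{k(P)/k}\circ\Res_P$, to zero by Theorem~\ref{thm:res}. This gives a well-defined homomorphism $\Phi\colon K(k;\Ga,\Gm^{\,q-1}) \to \Omega_k^{q-1}$, which is clearly surjective since every generator $a\,\dlog b_1\wedge\cdots\wedge\dlog b_{q-1}$ of $\Omega_k^{q-1}$ is hit by the symbol $\{a,b_1,\ldots,b_{q-1}\}_{k/k}$.

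For the reverse map $\Psi\colon \Omega_k^{q-1}\to K(k;\Ga,\Gm^{\,q-1})$, I would send $a\,\dlog b_1\wedge\cdots\wedge\dlog b_{q-1}$ to the symbol $\{a,b_1,\ldots,b_{q-1}\}_{k/k}$, and check this respects the defining relations of $\Omega_k^{q-1}$: the Leibniz rule $d(xy)=x\,dy+y\,dx$, additivity, and graded-commutativity/alternating property of the wedge. Each of these should follow from the corresponding bilinearity and the relations built into $R$; the alternating property in particular should come from the analogue of the Steinberg/antisymmetry relations encoded in the Somekawa construction for the $\Gm$-slots. Once both maps are defined, $\Phi\circ\Psi=\mathrm{id}$ is immediate on generators, and $\Psi\circ\Phi=\mathrm{id}$ reduces to showing every symbol $\{a,b_1,\ldots,b_{q-1}\}_{E/k}$ can be rewritten, using the projection formula, as a sum of symbols defined over $k$ itself — \emph{i.e.}, that $\Psi$ is surjective.

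The main obstacle I expect is well-definedness of $\Psi$, specifically verifying that the Leibniz and alternating relations of $\Omega_k^{q-1}$ are genuinely consequences of the relations in $R$ rather than extra relations one must impose by hand. The additive slot $\Ga$ behaves differently from the multiplicative slots, so showing that $d(xy)$ decomposes correctly — which in the differential world is the Leibniz rule but on the $K$-group side must emerge from a combination of bilinearity in the $\Gm$-factors and a reciprocity relation — is the delicate point. I would expect to handle this by testing the reciprocity relations against carefully chosen rational functions on $\P^1$ (typically $t$ and $1-t$, or $t$ and linear functions), transporting the classical derivation of the Steinberg relation from rational functions into this additive setting; controlling these computations so that exactly the Kähler relations (and no more) are produced is where the real work lies.
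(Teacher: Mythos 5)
Your overall architecture is the same as the paper's (a trace map on symbols in one direction, symbols over $k$ in the other, then one composite is the identity and the other follows from surjectivity via (PF)), but you have misplaced the difficulty, and the step you dismiss as formal is exactly where the proof can break. The (WR) relators are \emph{not} ``precisely annihilated by Theorem \ref{thm:res}'' for free: a relator has the form $\sum_P\{g_0(P),\ldots,\dP(g_{i(P)},f),\ldots,g_{q-1}(P)\}_{\kP/k}$, in which every entry except the $i(P)$-th has already been specialized at $P$. Your map $\Phi$ sends it to $\sum_P \Tr_{\kP/k}$ of a form assembled from these specializations, whereas the residue theorem only kills $\sum_P\Tr_{\kP/k}\circ\dP(\omega)$ for a \emph{single global} form $\omega = g_0\,\dlog g_1\cdots\dlog g_{q-1}\,\dlog f\in\Omega^q_{k(C)}$. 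So you must prove, point by point, that each specialized term equals $\Tr_{\kP/k}\circ\dP(\omega)$. When $i(P)\neq 0$ this requires the compatibility of the tame symbol with the residue map (the paper's Lemma \ref{lem:tameres}). When $i(P)=0$, i.e.\ the pole sits in the additive slot, the required identity (the paper's equation (\ref{eq:res})) is false for arbitrary $g_0,g_i,f$; it holds only because of the modulus condition (\ref{eq:modulus}) built into the definition of (WR), which forces both sides to vanish whenever $v_P(g_0)<0$. Your proposal never invokes the modulus condition at all, so the well-definedness of $\Phi$ cannot be completed as written; this direction, not the reverse one, is where the bulk of the paper's proof is spent.

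Your plan for $\Psi$ is essentially the paper's and is workable, but to carry it out you need a precise presentation of $\Omega_k^{q-1}$ in $\dlog$ coordinates: the kernel of $k\otimes_{\Z}(k^{\times})^{\otimes(q-1)}\to\Omega_k^{q-1}$ is generated by repeated-entry tensors and by the Cathelineau elements $x\otimes x\otimes\cdots+(1-x)\otimes(1-x)\otimes\cdots$ (the paper's (\ref{eq:str}), citing Bloch--Esnault and Bloch--Kato). In these coordinates ``Leibniz'' is not a separate relation, since multiplicativity of $\dlog$ is automatic from $\Z$-multilinearity of symbols; the real content is Cathelineau, i.e.\ $x\,\dlog x+(1-x)\,\dlog(1-x)=0$, which the paper derives from a (WR) relation on $\P^1$ with $f=t^{-1}$, $g=t$, $g_1=(t-x)(t-(1-x))/(t(t-1))$ --- exactly the kind of ``carefully chosen rational functions'' you anticipate --- while the repeated-entry relations follow from the Somekawa isomorphism with Milnor $K$-theory together with unique divisibility in characteristic $0$. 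Two smaller omissions: the case $\Char(k)=p>0$, where both sides vanish (Lemma \ref{lem:trivial} for the $K$-group, perfectness of $k$ for $\Omega_k^{q-1}$), and, for the surjectivity of $\Psi$, the identification $\Omega_E^{q-1}\simeq E\otimes_k\Omega_k^{q-1}$ for finite $E/k$, which is what allows a symbol over $E$ to be rewritten with $\Gm$-entries in $k^{\times}$ before applying (PF).
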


\noindent
This isomorphism should be compared with Kato and Somekawa's theorem (\ref{eq:KS}).

For a projective smooth and geometrically connected curve $X$ 
over a perfect field $k$, 
the sequence  
\[
    \Omega_{k(X)}^1 \xrightarrow{\d} \bigoplus_{P \in X_0} \kP \xrightarrow{\Tr} k
\]
becomes a complex by the residue theorem, 
where $X_0$ is the set of closed points in $X$, 
$\d := \oplus_P \dP$ is the collection of the refined residue maps  
and  
$\Tr := \sum_P \Tr_{\kP/k}$ is the sum of the trace maps. 
Taking $\Ga$ and the Jacobian variety $J_X$ 
of the curve $X$, 
we relate the Somekawa $K$-group $K(k; \Ga, J_X)$ and the complex above 
as follows:

\begin{theorem}[Cor.\ \ref{cor:SK1+}]
    We assume that the curve $X$ has a $k$-rational point. 
    Then there is a canonical isomorphism  
    \[
      K(k; \Ga, J_X) \isomto \Ker(\Tr)/\Im(\d).
    \]
\end{theorem}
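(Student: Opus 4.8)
The plan is to exhibit explicit maps in both directions between $\bigoplus_{P\in X_0}\kP$ and $K(k;\Ga,J_X)$ and to show they induce mutually inverse isomorphisms on homology. Fix the rational point $O\in X(k)$ and let $\iota\colon X\to J_X$ be the Abel--Jacobi embedding $P\mapsto[P]-[O]$; for a closed point $P\in X_0$ it produces a point $\iota(P)\in J_X(\kP)$. Since $\W_1=\Ga$ gives $\Ga(\kP)=\kP$, I would define
\[
    \Psi\colon\bigoplus_{P\in X_0}\kP\to K(k;\Ga,J_X),\qquad (a_P)_P\mapsto\sum_P\{a_P,\iota(P)\}_{\kP/k}.
\]
First I would check that $\Psi$ annihilates $\Im(\d)$: for $\omega\in\Omega^1_{k(X)}$ the family $(\Res_P\omega)_P$ of refined residues is precisely the boundary attached to a function-field symbol, so $\Psi(\oplus_P\dP\,\omega)=0$ is an instance of the ``Weil reciprocity'' relation imposed on $R$ through the residue theorem (Thm.\ \ref{thm:res}). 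Hence $\Psi$ descends to $\bar\Psi\colon(\bigoplus_P\kP)/\Im(\d)\to K(k;\Ga,J_X)$.

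Next I would prove that $\bar\Psi$ is surjective and that its source may be taken to be $\Ker(\Tr)/\Im(\d)$. As $J_X(E)=\operatorname{Pic}^0(X_E)$ is generated by Abel--Jacobi images of closed points of $X_E$, the projection formula built into $R$ lets me rewrite every generator $\{a,x\}_{E/k}$ as a sum of symbols $\{b_P,\iota(P)\}_{\kP/k}$, so $\Psi$ is surjective. Moreover $\iota(O)=0$, so any contribution supported at $O$ vanishes; adjusting the $O$-component of a family by $-\sum_P\Tr_{\kP/k}(a_P)\in\kappa(O)=k$ alters neither $\Psi$ nor the class of the family and forces it into $\Ker(\Tr)$. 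Therefore $\Psi(\Ker(\Tr))=K(k;\Ga,J_X)$, which is the additive reflection of the fact that $J_X=\operatorname{Pic}^0$ is the degree-zero part of $\operatorname{Pic}$.

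For injectivity I would construct a left inverse $\Theta$. Representing a class in $J_X(E)=\operatorname{Pic}^0(X_E)$ by a degree-zero divisor $D=\sum_Q n_Q[Q]$ with $Q$ closed points of $X_E$ lying over $P(Q)\in X_0$, I set
\[
    \Theta(\{a,D\}_{E/k}):=\sum_Q n_Q\,\bigl(\Tr_{\kappa(Q)/k(P(Q))}(a)\ \text{in the $P(Q)$-component}\bigr).
\]
Transitivity of traces gives total trace $\Tr_{E/k}(a\cdot\deg D)=0$, so $\Theta$ lands in $\Ker(\Tr)$. The composites are then direct: $\Theta\circ\Psi=\mathrm{id}$ on $\Ker(\Tr)$ because $\Tr_{\kP/\kP}(a_P)=a_P$ and the $O$-correction vanishes on the kernel, while $\Psi\circ\Theta=\mathrm{id}$ follows from two applications of the projection formula, which identify $\{\Tr_{\kappa(Q)/\kP}(a),\iota(P)\}_{\kP/k}$ with $\{a,\iota(Q)\}_{\kappa(Q)/k}$ and then with $\{a,\operatorname{cores}_{\kappa(Q)/E}\iota(Q)\}_{E/k}$, using $\sum_Q n_Q\operatorname{cores}_{\kappa(Q)/E}\iota(Q)=D$ in $\operatorname{Pic}^0(X_E)$.

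The main obstacle is the well-definedness of $\Theta$: that it respects the defining relations $R$ and, above all, that it is independent of the divisor chosen in a class of $\operatorname{Pic}^0(X_E)$. Concretely, for $D=\operatorname{div}(f)$ with $f\in k(X_E)^\times$ I must show $\Theta(\{a,\operatorname{div}(f)\}_{E/k})\in\Im(\d)$. Here I would use that the refined residue of the absolute differential $a\,\dlog f$ at $Q$ equals $\operatorname{ord}_Q(f)\cdot a\in\kappa(Q)$, together with the compatibility of the refined residue map with the trace along $k(X_E)/k(X)$ supplied by the residue theorem (Thm.\ \ref{thm:res}); pushing $a\,\dlog f$ forward to an absolute differential on $k(X)$ then exhibits $\Theta(\{a,\operatorname{div}(f)\}_{E/k})$ as $\oplus_P\dP$ of that differential. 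Verifying this transfer compatibility of refined residues, and the analogous compatibility of $\Theta$ with the projection formula via transitivity of traces, is the technical heart of the argument; once it is in place, $\Theta$ descends to $K(k;\Ga,J_X)\to\Ker(\Tr)/\Im(\d)$ and is inverse to $\bar\Psi$.
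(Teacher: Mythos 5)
Your route is genuinely different from the paper's: the paper first proves the general isomorphism $\Coker(\d)\isomto K(k;\Ga,\cCH_0(X))$ for an arbitrary smooth projective variety (Thm.\ \ref{thm:SK1+}), and then deduces the corollary formally, comparing the split exact sequence induced by $\cCH_0(X)\simeq \cA_0(X)\oplus \Z$ and $\cA_0(X)\simeq J_X$ (as in Lem.\ \ref{lem:mix}) with the sequence $0\to \Ker(\Tr)/\Im(\d)\to \Coker(\d)\xrightarrow{\Tr}\Ga(k)\to 0$, whereas you work with $J_X$ directly. The parts of your argument concerning $\Psi$ are correct: $\Psi(\Im(\d))=0$ is indeed an instance of (WR), applied to $C=X$ with second entry the tautological point of $J_X(k(X))$ given by the Abel--Jacobi map (this point lies in $J_X(\Ohat_{X,P})$ for every $P$ because $J_X$ is proper, and the modulus condition (\ref{eq:modulus}) is vacuous because $m_P=\infty$ on an abelian variety); the surjectivity of $\Psi|_{\Ker(\Tr)}$ via two applications of (PF) together with $\iota(O)=0$ is also fine, as is the computation $\Theta\circ\Psi=\Id$ on $\Ker(\Tr)$.

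The gap is the well-definedness of $\Theta$. To descend to $K(k;\Ga,J_X)$, the map $\Theta$ must annihilate three families of relations: (i) divisorial equivalence in $J_X(E)=\operatorname{Pic}^0(X_E)$, (ii) the projection formula (PF), and (iii) the reciprocity relations (WR) of Definition \ref{def:Somekawa}. You address (i) --- note, though, that the trace--residue compatibility you invoke is Lemma \ref{lem:trres}, not the residue theorem (Thm.\ \ref{thm:res}) --- and (ii), and then assert that ``once it is in place, $\Theta$ descends.'' That assertion is false: (iii) is an independent requirement, and it is the principal one. Concretely, for every auxiliary smooth projective curve $C$ over $k$ with function field $K$, every $g\in K$, $h\in J_X(K)$ and $f\in K^{\times}$ as in (WR), you must show that $\sum_{P'\in C_0}\Theta(\{\d_{P'}(g,f),h(P')\}_{k(P')/k})$ lies in $\Im(\d)$, together with the variants in which $i(P')=1$, which involve Somekawa's local symbol for $J_X$. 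This forces you to express the reductions $h(P')$ as specializations of a divisor representing $h$, extend the closed points of that divisor over $C$ by the valuative criterion, distinguish the cases where their images in $X$ are closed points (where the residue theorem over $k(P)$ enters) or the generic point (where Lemma \ref{lem:trres} enters), and handle nontrivial residue field extensions via ramification indices and Lemma \ref{lem:rescan}. This is precisely the content of the claim (\ref{eq:claim1}) inside the paper's proof of Theorem \ref{thm:SK1+}, and it occupies the bulk of that proof; nothing in your proposal engages with it. As it stands, $\Theta$ is only defined on the Mackey product $\Ga\otimesM J_X(k)$, so the left-inverse argument does not yield injectivity of $\bar{\Psi}$, and bijectivity is not established.
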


\sn
In the analogy we are pursuing, 
the homology group $\Ker(\Tr)/\Im(\d)$ of the above complex 
is an additive version of the group $V(X)$ 
which is used in the proof of the higher dimensional class field theory 
\cite{Bloch81}, \cite{SSaito85b}.

\sn
\textit{Outline of this paper.}\ 
In Section \ref{sec:Witt}, 
we recall some properties of 
the generalized Witt group $\WS(k)$ for a field $k$, 
which is a generalization of 
both 
the ordinary Witt group and the big Witt group 
by \cite{HM01}, Section 1.  
We also recall  
the generalized de Rham-Witt complex $\WSOmega_k^{\bullet}$, 
following \cite{HM01}. 
After recalling R\"ulling's trace map and the residue map 
of the generalized de Rham-Witt complex (\cite{Rue07}),  
we study some functorial properties of 
the refined residue map 
$\dP: \WSOmega_{k(C)}^q \to \WSOmega_{k(P)}^{q-1}$ 
under the assumption that the base field $k$ is perfect. 
In Section \ref{sec:Somekawa}, 
we recall the definition and some properties 
of Mackey functors. 
Then combining Somekawa's local symbol 
and the residue map, 
we introduce the Somekawa $K$-group 
of the form 
$K(k;\WS, G_1,\ldots ,G_q)$  
for split semi-abelian varieties $G_1,\ldots, G_q$ 
over a perfect field $k$ 
as a quotient of the Mackey product 
$\WS \otimesM G_1 \otimesM \cdots \otimesM G_q (k)$ (Def.\ 2.10). 
Finally, 
we show that this group has the following 
descriptions: 
\begin{align*}
&K(k;\WS) \isomto \WS(k)\quad \mathrm{(Lem.\ \ref{lem:isom})}, \\
&K(k;\WS, G_1,\ldots ,G_q) = 0,\ 
  \mathrm{if}\ \Char(k)>0,\ q>1\quad \mathrm{(Lem.\ \ref{lem:trivial})},\\
& K(k;\Ga, \Gm,\ldots ,\Gm) \isomto \Omega_k^{q-1}\quad 
\mathrm{(Thm.\ \ref{thm:main})}.
\end{align*}
In Section \ref{sec:mix}, 
we study the additive counterpart of Section 2 in \cite{Som90}.  
Following Akhtar \cite{Akhtar04a}, 
the Somekawa $K$-group will be extended to 
$K(k;\WS, G_1,\ldots ,G_q, \cCH_0(X))$ 
where $\cCH_0(X)$ 
is the Mackey functor given by 
the Chow group of $0$-cycles  
on a variety $X$ over $k$
(Def.\ \ref{def:mix}). 
We will show that, 
for a smooth projective and geometrically connected curve $X$, 
\begin{align*}
  K(k;\WS, \cCH_0(X)) &\isomto 
    \WS(k),\ \mbox{if\ }\Char(k)>0\quad \mathrm{(Lem.\ \ref{lem:mix}\ (ii)),\ and}\\
  K(k;\Ga, \cCH_0(X)) &\isomto 
    \Coker\left(\d:\Omega^1_{k(X)} \to \bigoplus_{P\in X_0}\kP\right)\quad 
\mathrm{(Thm.\ \ref{thm:SK1+}\ (ii)).}
\end{align*}

Throughout this note, 
we mean 
by an \textit{algebraic group} 
a smooth connected and commutative group scheme over a field. 
For a field $F$, we denote by $\Char(F)$ the characteristic of $F$. 

Recently  
F.~Ivorra and K.~R\"ulling  
defined a  
Milnor-type $K$-group 
$T(\M_1,\ldots ,\M_q)(\Spec k)$ 
for Nisnevich sheaves with transfers 
$\M_1,\ldots , \M_q$ 
on the category of regular schemes over a field $k$ 
with dimension $\le 1$ satisfying several axioms (\cite{IR}). 
They show also 
\[
    T(\Ga, \overbrace{\Gm,\ldots , \Gm}^{q-1})(\Spec k) \isomto \Omega_{k}^{q-1}
\] 
when $k$ is perfect. 

\medskip\noindent
\textit{Acknowledgments.} 
A part of this note was written 
during a stay of the author at the Duisburg-Essen university. 
He thanks the institute for its hospitality. 
The author is indebted to Henrik Russell 
for his encouragement during the author's stay in Essen 
and helpful comments on this manuscript. 
He thanks also Kay R\"ulling for kindly informing 
us the results with Florian Ivorra \cite{IR}. 
Finally, thanks are due to the referee for 
valuable advice which enabled 
simplifying the original paper 
and to Professor Anthony Bak 
for reading the manuscript and making a number of helpful suggestions. 
This work was supported by KAKENHI 30532551.

\section{Witt groups and local symbols}
\label{sec:Witt}
Let $A$ be a (commutative) ring. 
A subset $S \subset \mathbb{N}$ of the set of positive integers 
$\mathbb{N}:= \Z_{>0}$ 
is called a \textit{truncation set} 
if $S\neq \emptyset$ and if for any $n\in S$, 
all its divisors are in $S$. 
The \textit{generalized Witt ring} $\WSA$ is defined to be $A^S$ as a set 
whose ring structure is determined by the condition that 
the ghost map 
\begin{equation}
    \label{eq:ghost}  
    \gh = (\gh_s)_{s\in S}: \WSA \to A^S
\end{equation}
defined by 
$\gh_s((a_t)_{t\in S}) := \sum_{d\mid s} da_d^{s/d}$,  
is a natural transformation of functors of rings (\cite{HM01}, Sect.\ 1). 
We denote by $[-]:A \to \WSA$ 
the Teichm\"uller map defined by 
$[a]:= (a,0, 0, \ldots )$. 

\begin{lemma}
    $\mathrm{(i)}$ The Teichm\"uller map is multiplicative, 
    that is, $[ab] = [a][b]$ for any $a,b\in A$. 
    The unit and zero in $\WSA$ are $[1]$ and $[0]$ 
    respectively. 

    \sn
    $\mathrm{(ii)}$ Any ring homomorphism $\phi:A\to B$ 
    induces a ring homomorphism $\WSA \to \WSB$ 
    defined by $(a_s)_{s\in S} \mapsto (\phi(a_s))_{s\in S}$. 
    If $\phi$ is surjective (resp.\ injective), then the same holds 
    on Witt rings.  


    \sn 
    $\mathrm{(iii)}$ For two truncation sets $T\subset S$, 
    the restriction map 
    $\WS(A) \to \W_T(A)$ 
    defined by $(a_s)_{s\in S} \mapsto (a_{s})_{s\in T}$ 
    is surjective. 
\end{lemma}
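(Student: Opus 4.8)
The plan is to reduce every assertion to the behaviour of the ghost map, exploiting that the ring operations on $\WSA$ are given by \emph{universal} polynomials $S_s, P_s$ with integer coefficients---those forced by the requirement that $\gh$ be a natural transformation of ring-valued functors. First I would record the ghost components of a Teichm\"uller element: since $[a] = (a,0,0,\ldots)$, only the divisor $d=1$ contributes to $\gh_s([a]) = \sum_{d\mid s} d\,a_d^{s/d}$, so that $\gh_s([a]) = a^s$ for every $s\in S$; in the same way $\gh_s([0]) = 0$ and $\gh_s([1]) = 1$ for all $s$.

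For part (ii) I would observe that, because $S_s$ and $P_s$ have integer coefficients and do not depend on $A$, any ring homomorphism $\phi:A\to B$ satisfies $\phi(S_s(x_t,y_t)) = S_s(\phi(x_t),\phi(y_t))$ and the analogue for $P_s$. Hence the componentwise map $\WS(\phi):(a_s)\mapsto(\phi(a_s))$ respects both operations and fixes $[1]$, so it is a ring homomorphism; as it is merely $\phi$ applied in each coordinate, it is injective (resp.\ surjective) as soon as $\phi$ is.

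Part (i) then follows by the standard torsion-free reduction. Over $R = \Z[X,Y]$ the ghost map is injective, and the computation above gives $\gh_s([X][Y]) = X^sY^s = \gh_s([XY])$ for all $s$, whence $[X][Y]=[XY]$ in $\WS(R)$; specializing along $R\to A$, $X\mapsto a$, $Y\mapsto b$, and invoking (ii) yields $[a][b]=[ab]$ in $\WSA$. The same comparison identifies $[0]$ and $[1]$ with the additive and multiplicative identities, their ghost vectors being the zero and unit of $A^S$ (equivalently, one checks the polynomial identities $S_s(x_t,0)=x_s$ and $P_s((1,0,\ldots),x_t)=x_s$ over $R$ and transports them by (ii)). For part (iii), surjectivity of $(a_s)_{s\in S}\mapsto(a_s)_{s\in T}$ is clear on underlying sets---lift $(b_t)_{t\in T}$ by setting the missing coordinates to $0$---while the fact that restriction is a ring homomorphism comes from $T$ being a truncation set: for $t\in T$ all divisors of $t$ lie in $T\subset S$, so $\gh_t$ over $S$ factors through the $T$-coordinates, making restriction compatible with both ghost maps and the projection $A^S\to A^T$; the torsion-free argument again upgrades this to a ring map.

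The only genuinely delicate point is the legitimacy of the torsion-free reduction---the injectivity of $\gh$ when $A$ has no $\Z$-torsion, together with the descent of universally verified identities to an arbitrary $A$ via (ii). Every other step is a direct computation with the explicit formula for $\gh_s$.
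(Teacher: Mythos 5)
Your proof is correct: the ghost-component computation $\gh_s([a])=a^s$, the universal integer polynomials giving functoriality in (ii), the reduction of the identities in (i) to the torsion-free ring $\Z[X,Y]$ (where $\gh$ is injective) followed by descent along $\Z[X,Y]\to A$ via (ii), and the zero-filling lift in (iii) are all sound, and the order (ii) before (i) avoids any circularity. The paper itself states this lemma without proof, treating it as standard material from the construction of $\WS(A)$ in \cite{HM01}, Section 1, and your argument is precisely the standard one used there, so there is nothing to contrast.
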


\noindent
For each $n\in \mathbb{N}$, 
$S/n:= \{ d\in \mathbb{N}\ |\ nd\in S\}$ 
is a truncation set. 
The Frobenius and the Verschiebung 
\[
    F_n:\WSA \to \W_{S/n}(A),\quad V_n:\W_{S/n}(A) \to \WSA 
\]
are defined by the rules $\mathrm{gh}_s\circ F_n = \mathrm{gh}_{sn}$ and 
\[
    (V_n(a))_s := 
        \begin{cases}
            a_{s/n}, &\mathrm{if}\ n\mid s,\\ 
            0,& \mathrm{otherwise}, 
        \end{cases}               
\]
respectively.

\begin{lemma}
    \label{lem:Witt}
    $\mathrm{(i)}$ If $F_1 = \Id$, $F_m\circ F_n = F_{mn}$ 
    and $V_1 = \Id$, then $V_m\circ V_n = V_{mn}$, 
    where $\Id$ is the identity map.

    \sn
    $\mathrm{(ii)}$ For $w = (w_s)_{s\in S} \in \WSA$, 
    \[
      w = \sum_{s\in S} V_s([w_s]).
    \]

    \sn
    $\mathrm{(iii)}$ If $(m,n) = 1$, then $F_m \circ V_n = V_n\circ F_m$.

    \sn
    $\mathrm{(iv)}$ $F_n\circ V_n = n$.

    \sn
    $\mathrm{(v)}$ $F_m\circ V_n ([a]) = (m,n)V_{n/(m,n)}([a]^{m/(m,n)})$ 
    for all $a \in A$.

    \sn
    $\mathrm{(vi)}$ $V_n([a])V_r([b]) = (n,r)V_{nr/(n,r)}([a]^{r/(n,r)}[b]^{n/(n,r)})$ 
    for all $a,b \in A$. 

    \sn
    $\mathrm{(vii)}$ $[a]V_n(w) = V_n([a]^nw)$ 
    for all $a\in A$ and $w \in \WSA$.
\end{lemma}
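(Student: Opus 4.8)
The plan is to prove every identity by passing to ghost coordinates. Recall that the ghost map $\gh = (\gh_s)_{s\in S}\colon \WSA \to A^S$ is a ring homomorphism (this is precisely how the ring structure on $\WSA$ is defined), and that it is injective as soon as $A$ is torsion free. So the first move is to record the three auxiliary ghost formulas on which everything rests. From the definition we have $\gh_s\circ F_n = \gh_{sn}$. Next, the Verschiebung satisfies $\gh_s\circ V_n = n\,\gh_{s/n}$ when $n\mid s$ and $\gh_s\circ V_n = 0$ otherwise; this follows by a direct computation from the componentwise definition of $V_n$ together with $\gh_s((a_t)) = \sum_{d\mid s} d\,a_d^{s/d}$, after substituting $d = nd'$ and writing $s = ns'$. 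Finally $\gh_s([a]) = a^s$, because the Teichmüller vector $[a] = (a,0,0,\dots)$ has only its first entry nonzero, so only the divisor $d=1$ survives. I will also use $[a]^n = [a^n]$, i.e.\ the multiplicativity of the Teichmüller map.

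With these formulas in hand, each of (i)--(vii) reduces to a one-line check on ghost components over a torsion-free ring. For (iv) one computes $\gh_s(F_nV_n(x)) = \gh_{sn}(V_n(x)) = n\,\gh_s(x) = \gh_s(nx)$; for (iii), when $(m,n)=1$ the condition $n\mid sm$ is equivalent to $n\mid s$, and on that locus both sides give $n\,\gh_{s/n}(F_m x)$ while both vanish off it; for (vii) one expands $\gh_s([a]V_n(w)) = a^s\cdot n\,\gh_{s/n}(w)$ and $\gh_s(V_n([a]^n w)) = n\,\gh_{s/n}([a^n]w) = n\,a^s\,\gh_{s/n}(w)$, using $[a]^n=[a^n]$ and that $\gh$ is multiplicative. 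The multiplicative identities (v) and (vi) are the only ones requiring a little bookkeeping with $d=(m,n)$, respectively $e = (n,r)$: after writing $m=dm'$, $n=dn'$ (resp.\ setting $L = nr/e$) one checks that the support conditions match and that the surviving ghost values agree, the key simplifications being $(r/e)(s/L)=s/n$ and $(n/e)(s/L)=s/r$. For (ii), comparing $\gh_t\bigl(\sum_s V_s([w_s])\bigr) = \sum_{s\mid t} s\,w_s^{t/s}$ with $\gh_t(w) = \sum_{d\mid t} d\,w_d^{t/d}$ gives equality on the nose. The parts of (i) concerning $V_1=\Id$ and $V_m\circ V_n=V_{mn}$ can alternatively be read off immediately from the explicit componentwise definition of $V_n$, bypassing ghosts, while $F_1=\Id$ and $F_m\circ F_n=F_{mn}$ follow from $\gh_s\circ F_m\circ F_n = \gh_{smn} = \gh_s\circ F_{mn}$.

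The one genuine subtlety, and the step I expect to be the main obstacle, is that the ghost map is injective only when $A$ is torsion free, whereas the lemma is asserted for an arbitrary commutative ring. The standard remedy is a universality argument: each identity is an equality of expressions built naturally from $F_n$, $V_n$, $[-]$ and the ring operations, all of which are natural transformations in the ring argument. Hence it suffices to verify each identity in a torsion-free ``universal'' ring — a polynomial ring $\Z[x_1,x_2,\dots]$ with enough indeterminates to name the elements appearing — and then to transport the resulting equality to an arbitrary $A$ by applying $\WS(\phi)$ for a ring homomorphism $\phi$ sending the indeterminates to the prescribed elements of $A$. Because $F_n$, $V_n$ and $[-]$ all commute with $\WS(\phi)$ by functoriality, the identity descends to $A$. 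This reduces all seven statements to the torsion-free ghost computations sketched above.
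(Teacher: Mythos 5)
Your proof is correct. The paper itself gives no argument for this lemma at all: it is stated as a recollection of standard facts about generalized Witt vectors, with the construction cited from Hesselholt--Madsen (\cite{HM01}, Sect.\ 1). The route you take --- computing both sides of each identity on ghost components, using $\gh_s\circ F_n=\gh_{sn}$, $\gh_s\circ V_n = n\,\gh_{s/n}$ (zero when $n\nmid s$) and $\gh_s([a])=a^s$, and then removing the torsion-free hypothesis by naturality of $F_n$, $V_n$, $[-]$ and the ring operations together with a surjection from a polynomial ring over $\Z$ --- is precisely the standard argument in the literature, and your ghost computations (including the bookkeeping $(r/e)(s/L)=s/n$, $(n/e)(s/L)=s/r$ in (vi)) check out. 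So there is nothing to compare against in the paper; you have simply supplied the omitted standard proof, and correctly.
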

We define 
$\WmA := \W_{\{1,2,\ldots,m\}}(A)$ which is called the \textit{big Witt ring} of length $m$. 
For a fixed prime $p$,  
\[
    W(A):= \W_{\{1,p,p^2,\ldots \}}(A)\quad \mbox{and}\quad W_m(A):= \W_{\{1,p,\ldots ,p^{m-1}\}}(A).
\] 

Let $p$ be an odd prime or $0$. 
For a $\Z_{(p)}$-algebra $A$ 
and a truncation set $S$, 
the \textit{de Rham-Witt complex} $\WSOmega_A^{\bullet}$ 
is a differential graded algebra over $A$ 
which generalizes 
the de Rham complex $\Omega_A^{\bullet} := \Omega_{A/\Z}^{\bullet}$ 
and comes 
equipped with the Verschiebung  
$V_n:\W_{S/n}\Omega_{A}^{\bullet} \to \WSOmega_A^{\bullet}$ 
and the Frobenius 
$F_n: \WSOmega_A^{\bullet} \to \W_{S/n}\Omega_{A}^{\bullet}$ 
for each $n > 0$. 
The construction of $\WSOmega_A^{\bullet}$ 
and some properties of it are given in \cite{HM04} and \cite{Ill79}.
If $S$ is finite, 
there is a surjective map of differential graded rings 
$\Omega_{\WS(A)}^{\bullet} \to \WSOmega_A^{\bullet}$ 
and the map becomes 
bijective when $S=\{1\}$ or in degree $0$, 
namely, 
$\W_{\{1\}}\Omega_A^{\bullet} \simeq \Omega_{A/\Z}^{\bullet}$ 
is the absolute de Rham complex and 
$\WS\Omega_A^0 \simeq \WS(A)$ is the Witt ring.  

\begin{proposition}[\cite{HM01}, Sect.\ 1.2, \cite{Rue07}, Prop.\ 1.19]
    \label{prop:dW}
    Set 
    $P := \{1,p,p^2,\ldots \}$ 
    and 
    for a differential graded algebra $(\Omega,d)$ 
    and a positive integer $j$ 
    we denote 
    $\Omega(j^{-1}) :=(\Omega, j^{-1}d)$. 
    Let $A$ be a $\Z_{(p)}$-algebra and $S$ a finite truncation set.  

    \sn
    $\mathrm{(i)}$  
    Then, there is a canonical isomorphism 
    \[
        \WSOmega_A^{\bullet} \isomto 
        \prod_{j \in \mathbb{N},\, (j,p) = 1}\W_{P\cap S/j}\Omega_A^{\bullet}(j^{-1})
    \]
    as differential graded algebras 
    given by $\omega \mapsto F_j(\omega)|_{P\cap S/j}.$

    \sn
    $\mathrm{(ii)}$ 
    If $p$ is nilpotent on $A$, then
    for an \'etale homomorphism $A\to B$ of $\Z_{(p)}$-algebras, 
    there is an isomorphism of $\WS(B)$-modules 
    \[
        \WS(B)\otimes_{\WS(A)} \WSOmega_A^q \simeq \WSOmega_B^q
    \] 
    for any $q$.
\end{proposition}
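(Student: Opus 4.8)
The plan is to prove the two parts by reducing everything to the interaction of the Frobenius operators $F_j$, for $j$ prime to $p$ and hence invertible in the $\Z_{(p)}$-algebra $A$, with the differential. For (i), the combinatorial input is the unique factorization $s = jp^{v}$ with $(j,p)=1$ of each $s\in S$, which gives a bijection of truncation sets $S \isomto \bigsqcup_{(j,p)=1}(P\cap S/j)$, $jp^{v}\mapsto (j,p^{v})$, using that $p^{v}\in S/j$ exactly when $jp^{v}\in S$. First I would treat the degree-zero statement, the decomposition of Witt rings $\WS(A)\isomto \prod_{(j,p)=1}\W_{P\cap S/j}(A)$ whose $j$-th component is $w\mapsto F_j(w)|_{P\cap S/j}$. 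This is the standard product decomposition of (big) Witt rings into $p$-typical factors: one checks that each $F_j$ is a ring homomorphism (Lemma \ref{lem:Witt}), that the components match the bijection above through the ghost map, and that the map is bijective with inverse assembled from the Verschiebungen $V_j$, the projections being furnished by the idempotents $\tfrac1j V_jF_j$.

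Next I would promote this to all degrees. The essential relation is $d\circ F_j = j\,(F_j\circ d)$, which for $j=p$ is the classical $dF_p = pF_pd$ and extends multiplicatively in $j$ (using $F_j d[a]=[a]^{j-1}d[a]$ on Teichm\"uller elements together with Lemma \ref{lem:Witt}(ii)); since $j$ is invertible in $A$ this reads $F_j\circ d = j^{-1}\,d\circ F_j$, which is exactly the twist recorded by the factor $\W_{P\cap S/j}\Omega_A^{\bullet}(j^{-1})$. Thus each $F_j$ is a homomorphism of graded algebras intertwining $d$ with $j^{-1}d$, so $\omega\mapsto (F_j(\omega)|_{P\cap S/j})_j$ is a morphism of differential graded algebras. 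To see that it is an isomorphism I would appeal to the universal property of the de Rham-Witt complex from the cited construction: $\WSOmega_A^{\bullet}$ is initial among Witt complexes over $A$, and the right-hand product, equipped with the Frobenii and Verschiebungen assembled from the factors under the reindexing, is again such a Witt complex whose degree-zero part is $\WS(A)$ by the ring decomposition; initiality then forces the comparison map to be invertible. The main obstacle in (i) is precisely this last point: verifying that the product of $p$-typical de Rham-Witt complexes, with the differential on the $j$-th factor twisted by $j^{-1}$, genuinely carries a Witt-complex structure compatible with all the operators, so that the reindexing is an equivalence and not merely a degreewise bijection.

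For (ii) the strategy is to reduce to a base-change statement for the K\"ahler differentials of the Witt rings. The one genuinely nontrivial ingredient is that, when $p$ is nilpotent on $A$, an \'etale homomorphism $A\to B$ induces an \'etale homomorphism $\WS(A)\to \WS(B)$; this is the unique functorial lifting of \'etale algebras to Witt vectors (van der Kallen, Langer--Zink), and I would take it as the crux of the argument. Granting it, and using that $S$ is finite so that there are surjections $\Omega_{\WS(A)}^{\bullet}\surj \WSOmega_A^{\bullet}$ and $\Omega_{\WS(B)}^{\bullet}\surj \WSOmega_B^{\bullet}$, \'etale base change for K\"ahler differentials yields $\WS(B)\otimes_{\WS(A)}\Omega_{\WS(A)}^{q}\isomto \Omega_{\WS(B)}^{q}$.

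It then remains to see that the relations cutting out $\WSOmega^{q}$ inside $\Omega_{\WS}^{q}$ are compatible with this base change, i.e.\ that the submodule of relations over $B$ is generated by the image of the one over $A$. Since $\WS(B)$ is \'etale, hence flat, over $\WS(A)$, tensoring the defining presentation of $\WSOmega_A^{q}$ by $\WS(B)$ preserves exactness and identifies the result with the presentation of $\WSOmega_B^{q}$; combined with the differential identities and part (i) (to reduce, if one prefers, to the $p$-typical factors, where the standard \'etale invariance of the de Rham-Witt complex is available) this gives the claimed isomorphism of $\WS(B)$-modules. As already indicated, the main obstacle here is the \'etale-lifting property of Witt rings; once that is in hand the remainder is formal flat base change.
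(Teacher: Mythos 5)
The paper offers no proof of this proposition: it is imported wholesale from Hesselholt--Madsen (\cite{HM01}, Sect.~1.2) and R\"ulling (\cite{Rue07}, Prop.~1.19), so there is no internal argument to compare yours against; what can be judged is whether your blind sketch reconstructs the arguments of those sources, and in outline it does. Two points deserve sharpening. In (i), initiality of $\WSOmega_A^{\bullet}$ by itself only produces a (unique) map to the product, not its invertibility; the actual mechanism is that for a $\Z_{(p)}$-algebra the idempotents $\tfrac{1}{j}V_jF_j$ act on \emph{every} Witt complex $E$ over $A$ (through the ring map $\WS(A)\to E^0_S$), so the entire category of Witt complexes over $A$ decomposes into a product of categories of $p$-typical ones with twisted differentials, and the initial objects then correspond factor by factor. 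This is precisely the verification you flag as the ``main obstacle,'' so you have located the real work correctly, but the conclusion should be phrased as an equivalence of categories of Witt complexes rather than as initiality forcing invertibility. In (ii), your flat-base-change treatment of the presentation $\Omega^{\bullet}_{\WS(A)}\surj\WSOmega_A^{\bullet}$ leaves open exactly the claim you concede: that the relation submodule over $B$ is generated by the image of the one over $A$, and this is not formal. The clean way to close it is the route you mention only parenthetically: use (i) to reduce to the $p$-typical factors and invoke Langer--Zink's \'etale base change for the $p$-typical de Rham--Witt complex, which in turn rests on the van der Kallen/Langer--Zink theorem that $\WS(A)\to\WS(B)$ is \'etale when $p$ is nilpotent --- the ingredient you rightly identify as the crux. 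With those two adjustments your proposal coincides with the proofs in the literature that the paper cites.
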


\begin{theorem}[\cite{Rue07}, Thm.\ 2.6]
    \label{thm:trace}
    Let $F$ be a field with $\Char(F) \neq 2$, 
    and $S$ a finite truncation set.
    For a finite field extension  $E/F$, 
    there is a map of differential graded 
    $\WSOmega_F^{\bullet}$-modules
    \[
      \Tr_{E/F}= \Tr^{\bullet}_{E/F}: \WSOmegaEb \to \WSOmegaFb
    \] 
    satisfying the following properties:

    \sn
    $\mathrm{(a)}$ $\Tr_{E/F}^0:\WSE\to \WSF$ is the trace map 
    on Witt rings and coincides with 
    the one on the de Rham complexes 
    $\Tr_{E/F}:\Omega_{E}^{\bullet} \to \Omega_F^{\bullet}$ {\rm (\cite{Kunz})} 
    if $S = \{1\}$.

    \sn
    $\mathrm{(b)}$ If $E/F$ is separable, then 
    we identify $\WSE\otimes_{\WSF} \WSOmegaFq \simeq \WSOmegaEq$ 
    and the map $\Tr_{E/F}^q$ is given by 
    $\Tr^0_{E/F} \otimes \Id$, where $\Id$ is the identity map.

    \sn
    $\mathrm{(c)}$ For finite field extensions $F \subset E_1 \subset E_2$, 
    we have $\Tr_{E_2/F} = \Tr_{E_1/F}\circ \Tr_{E_2/E_1}$.

    \sn
    $\mathrm{(d)}$ The trace map commutes with $V_n,F_n$ and restriction maps.
\end{theorem}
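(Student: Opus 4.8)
The plan is to construct $\Tr_{E/F}$ by reducing an arbitrary finite extension to the separable and the purely inseparable cases and then gluing them together. Writing $F \subseteq F_s \subseteq E$ with $F_s$ the separable closure of $F$ in $E$, the extension $F_s/F$ is separable and $E/F_s$ is purely inseparable (the latter is trivial when $\Char F = 0$). Once a trace is available for each factor, one sets $\Tr_{E/F} := \Tr_{F_s/F}\circ\Tr_{E/F_s}$, with independence of the factorization and the general transitivity (c) following from the uniqueness noted at the end. Since $F$ is a $\Z_{(p)}$-algebra, only $\Char F = 0$ (every finite extension separable) and $\Char F = p$ arise.

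For the separable case, $E/F$ is finite \'etale, hence so is $\WSE/\WSF$; in particular it is finite free and carries the Witt-ring trace $\Tr^0_{E/F}:\WSE\to\WSF$. Using the \'etale base-change isomorphism $\WSOmegaEq \simeq \WSE\otimes_{\WSF}\WSOmegaFq$ of Proposition \ref{prop:dW}(ii) (in characteristic $0$ one obtains the same identification from the decomposition of Proposition \ref{prop:dW}(i) together with \'etale base change for the de Rham complex), I would define $\Tr^q_{E/F} := \Tr^0_{E/F}\otimes\Id$. As relative differentials vanish for an \'etale map, $d$ acts only on the $\WSOmegaFq$-factor, so this map commutes with $d$ and is a morphism of differential graded $\WSOmegaFb$-modules. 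This is precisely property (b), and (a) in degree $0$ holds by construction.

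The purely inseparable case is the main obstacle, because \'etale base change is no longer available. Here I would first pass, via the product decomposition of Proposition \ref{prop:dW}(i), to the $p$-typical complexes $\W_{P\cap S/j}\Omega$, and then reduce to a simple extension $E = F(a)$ with $a^p = b\in F$. For such an extension $\Tr_{E/F}$ must be written down explicitly on the standard generators of the de Rham-Witt complex (products of terms $V_n([x])$, $dV_n([x])$ and $\dlog[y]$), its normalization being forced by compatibility with the truncation $\WSOmega\to\Omega$ and with Kunz's residue-theoretic trace $\Omega_E^{\bullet}\to\Omega_F^{\bullet}$. The technical heart is to check that this map is well defined (independent of the chosen presentation of an element), is $\WSOmegaFb$-linear, and commutes with $d$; the last point requires an explicit computation exploiting $da^p = 0$ and the Witt-vector identities of Lemma \ref{lem:Witt}.

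It remains to deduce properties (a)--(d) in general. Compatibility with $F_n$, $V_n$ and the restriction maps (property (d)) is verified on the generators $V_n([x])\,\dlog[y_1]\wedge\cdots\wedge\dlog[y_q]$ and their $dV_n$-analogues using parts (iii)--(vii) of Lemma \ref{lem:Witt}; transitivity (c) is checked on the separable and purely inseparable factors and then composed. Finally, the uniqueness of a morphism satisfying (a)--(d) --- it is pinned down by its values on generators together with compatibility with the truncations and with Kunz's de Rham trace --- shows that $\Tr_{E/F} = \Tr_{F_s/F}\circ\Tr_{E/F_s}$ is independent of the factorization and that the degree-$0$, $S=\{1\}$ normalization in (a) agrees with the classical de Rham trace. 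I expect the well-definedness and the $d$-compatibility in the purely inseparable case to account for most of the work.
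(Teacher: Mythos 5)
First, a point of order: the paper contains no proof of this statement at all --- Theorem \ref{thm:trace} is imported verbatim from R\"ulling (\cite{Rue07}, Thm.\ 2.6) and is used as a black box --- so there is no internal argument to compare yours against; your proposal has to stand as a proof of R\"ulling's theorem itself. At the level of strategy it has the right shape, and indeed essentially the shape of R\"ulling's construction: d\'evissage $F\subset F_s\subset E$ into a separable part and a purely inseparable part, the separable trace defined as $\Tr^0_{E/F}\otimes\Id$ via \'etale base change (Prop.\ \ref{prop:dW} (ii) in characteristic $p$; the idempotent decomposition of Prop.\ \ref{prop:dW} (i) plus classical \'etale base change for $\Omega^{\bullet}$ in characteristic $0$), towers of degree-$p$ extensions in the purely inseparable case, and transitivity to glue.

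The genuine gap is that the purely inseparable case --- the only place where the theorem says anything beyond classical results --- is never actually carried out. Note that for $E=F(a)$, $a^p=b\in F$, nothing here is formal: the field trace $E\to F$ is identically zero, yet the de Rham trace satisfies $\Tr(a^{p-1}da)=db\neq 0$, and even in degree $0$ the Witt-vector trace involves terms like $V_p([b])$; so the map cannot be bootstrapped from degree-$0$ data. Saying that the trace ``must be written down explicitly on the standard generators'' with normalization ``forced'' by compatibility with truncation and with Kunz's trace, and that ``the technical heart is to check well-definedness, linearity and $d$-compatibility,'' is naming the theorem, not proving it: no formula on generators is proposed, and the existence of a well-defined $\WSOmegaFb$-linear, $d$-compatible map with those forced values is precisely R\"ulling's theorem, which in \cite{Rue07} takes a lengthy generator-and-relation analysis of $\WSOmegaEb$ as a $\WSOmegaFb$-module. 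The uniqueness claim you invoke to get independence of the factorization and transitivity is likewise unproven and not obvious: the restriction $\WSOmegaE^{q}\to\Omega_E^{q}$ has a large kernel (spanned by $V_n$- and $dV_n$-terms), and $\WSOmegaFb$-linearity together with commutation with $F_n$, $V_n$ and restriction does not formally pin down a map on all of $\WSOmegaE^{q}$ without exactly the analysis you skipped. Finally, your separable-case argument that the map commutes with $d$ because ``$d$ acts only on the $\WSOmegaFq$-factor'' is wrong as stated: under $\WSE\otimes_{\WSF}\WSOmegaFq\simeq\WSOmegaEq$ one has $d(x\otimes\omega)=dx\cdot(1\otimes\omega)+x\otimes d\omega$ with $dx\in\WSOmegaE^{1}$, which does not lie in the second factor; one needs $\Tr\circ d=d\circ\Tr$ on $\WSE$, proved for instance by faithfully flat base change to the split case.
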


\begin{proposition}[\cite{Rue07}, Prop.\ 2.12]
    \label{prop:res}
    Let $A$ be a $\Z_{(p)}$-algebra 
    and $S$ a finite truncation set. 
    There is a map  
    $\Res = \Res_{t,S}^q : \WS\Omega_{\Att}^{q} \to \WS\Omega_{A}^{q-1}$
    satisfying the following properties:

    \sn
    $\mathrm{(a)}$ $\Res^q(\alpha\omega) = \alpha \Res_t^{q-i}(\omega)$ 
    for $\alpha \in \WS\Omega_{A}^i$ and $\omega \in \WSOmega_{\Att}^{q-i}$. 

    \sn
    $\mathrm{(b)}$ $\Res$ 
    is a natural transformation with respect to $A$.

    \sn
    $\mathrm{(c)}$ $\Res$ commutes with $d$, $V_n,F_n$ and restriction. 

    \sn
    $\mathrm{(d)}$ If $u\in (A[\![t]\!])^{\times}$ and $\tau = tu$, then 
    $\Res_t = \Res_{\tau}$.

    \sn
    $\mathrm{(e)}$ $\Res(\omega) = 0$ 
    for $\omega \in \WSOmega^q_{A[\![t]\!]}$ or $\omega \in \WSOmega_{A[1/t]}^q$. 

    \sn
    $\mathrm{(f)}$ If $\omega \in \WSOmega_{A[\![t]\!]}^{q-1}$, 
    then $\Res(\omega \dlog[t]) = \omega(0)$, 
    where 
    $[t]:= (t,0,\ldots ,0)$ is the Teichm\"uller lift of $t$, 
    $\dlog [t] = d[t]/[t]$ and 
    $\omega(0)$ is the image of $\omega$ by the natural map 
    $\WSOmega_{A[\![t]\!]}^{q-1} \to \WSOmega_{A}^{q-1}$. 

    \sn
    $\mathrm{(g)}$
    For $a,b\in A$ and $i,j \in \Z$, 
    \begin{align*}
        &\Res(V_n([at^j])dV_m([bt^i])) \\ 
        &\qquad = 
        \begin{cases}
            \mathrm{sgn}(i)(i,j)V_{mn/(m,n)}([a]^{m/(m,n)}[b]^{n/(m,n)}), 
            &\mathit{if}\ jm+in = 0\ \mathit{and}\ i\neq 0\\
            0,& \mathit{otherwise},
        \end{cases}\notag
    \end{align*} 
    where $\mathrm{sgn}(i) = i/|i|$.
\end{proposition}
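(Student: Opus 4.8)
This is \cite{Rue07}, Proposition~2.12; I sketch the construction and indicate where the work lies. The guiding idea is to manufacture the de Rham--Witt residue out of the classical residue on Laurent series by means of the structural decomposition of Proposition~\ref{prop:dW}(i). With $P = \{1,p,p^2,\ldots\}$ as in that proposition, the isomorphism there is $\omega \mapsto (F_j(\omega)|_{P\cap S/j})_j$; since $F_j$ and the restriction maps are natural in $A$, the identical recipe gives a natural isomorphism
\[
\WSOmega_{\Att}^{\bullet} \isomto \prod_{(j,p)=1}\W_{P\cap S/j}\Omega_{\Att}^{\bullet}(j^{-1}),
\]
compatible with the decomposition of $\WSOmega_A^{\bullet}$ under the inclusion $A \hookrightarrow \Att$. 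It therefore suffices to treat the $p$-typical complexes factorwise: when $p=0$ each factor is an honest (rescaled) de Rham complex $\Omega^{\bullet}_{\Att}(j^{-1})$ and the residue is the classical one, while for $p>0$ the factor $\W_{P\cap S/j}\Omega$ is handled by the same pattern together with the ghost map, the classical residue serving as the model onto which everything maps.

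\emph{Construction on generators.} By Lemma~\ref{lem:Witt}(ii) every Witt vector over $\Att$ is $\sum_s V_s([w_s])$ with each $w_s$ a finite Laurent tail, so, using the differential graded algebra structure, $\WSOmega_{\Att}^{q}$ is generated as a $\WSOmega_A^{\bullet}$-module by $\WSOmega_{A[\![t]\!]}^{q}$, by $\WSOmega_{A[1/t]}^{q}$, and by forms $\beta\,dV_n([at^i])$ with $\beta \in \WSOmega_{A[\![t]\!]}^{q-1}$. I would \emph{define} $\Res$ to be $\WSOmega_A^{\bullet}$-linear (property~(a)), to vanish on the first two families (property~(e)), and to be given on the third by the explicit value in~(g)---equivalently by the normalization $\Res(\beta\,\dlog[t]) = \beta(0)$ of~(f). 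Properties~(a), (e), (f) then hold by fiat, and~(g) is the local computation performed with Lemma~\ref{lem:Witt}(v)--(vii); commutation with $d$ (part of~(c)) follows from Leibniz once checked on generators, naturality in $A$ (property~(b)) is inherited since each ingredient is natural, and commutation with restriction is formal.

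\emph{The main obstacle.} Everything hinges on well-definedness: the prescription above must be independent of the chosen presentation of a form, i.e.\ it must respect all the relations in $\WSOmega_{\Att}^{\bullet}$. This is precisely what the decomposition buys us---on each factor one identifies the prescription with the (known) classical residue through the ghost map and transports the conclusion back---but carrying this out cleanly is the delicate point. The two places I expect real difficulty are, first, compatibility with $V_n$ and $F_n$ (the rest of~(c)): $F_n$ sends $S$ to $S/n$ and permutes and rescales the factors of Proposition~\ref{prop:dW}(i), so one must bookkeep the index $j$ and the twist $(j^{-1})$ with care; and second, invariance under change of uniformizer (property~(d)), which I would derive from (a), (c), (e), (f) by writing $\tau = tu$ with $u$ a unit, expanding $\dlog[\tau] = \dlog[t] + \dlog[u]$, and absorbing the resulting regular correction terms into the kernel described by~(e).
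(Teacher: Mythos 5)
First, a point of order: the paper never proves this proposition. It is imported verbatim from R\"ulling --- hence the bracketed ``\cite{Rue07}, Prop.\ 2.12'' in its statement --- and is used downstream as a black box (in Lemmas \ref{lem:tameres}, \ref{lem:rescan}, Theorem \ref{thm:main}, Corollary \ref{cor:WS}). So your attempt can only be compared with the construction in \cite{Rue07}, not with any argument internal to this paper.

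Against that benchmark, your opening reduction to the $p$-typical factors via Proposition \ref{prop:dW} (i) is the right first step, but the two ingredients your sketch rests on both give way exactly where the proposition is non-trivial. First, the generating system you use for $\WSOmega^{q}_{\Att}$ (power-series forms, forms over $A[1/t]$, and $\beta\,dV_n([at^i])$) is extrapolated from the field case: the expansion (\ref{eq:exp}) is R\"ulling's Lemma 2.9 for $K_P\simeq \kPtt$ with a perfect residue field, its sums are infinite (so at best one has topological generation plus a continuity argument for $\Res$), and over an arbitrary $\Z_{(p)}$-algebra $A$ neither the existence nor --- what a ``definition by fiat'' actually requires --- the \emph{uniqueness} of such expansions comes for free; note also that the Teichm\"uller lift of a Laurent series is not an additive combination of terms $[at^j]$. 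Second, the only mechanism you offer for well-definedness, namely identifying the prescription with the classical residue ``through the ghost map'' and transporting back, fails for precisely those $A$ that make the statement hard: the ghost map is injective only when $A$ is $p$-torsion free, and for an $\mathbb{F}_p$-algebra (a case the paper genuinely needs, e.g.\ in Lemma \ref{lem:rescan}) it identifies nothing. What closes the gap in the literature is one of two things absent from your sketch: either the Hesselholt--Madsen structure theorem giving a canonical, unique decomposition of $\WSOmega^{\bullet}_{A[t,1/t]}$ over an \emph{arbitrary} $\Z_{(p)}$-algebra $A$, after which ``$\Res$ $=$ coefficient of $\dlog[t]$'' is well-posed and (a)--(g) become checks; or a descent argument: define $\Res$ for $\mathbb{Q}$-algebras, where $\WSOmega^{\bullet}$ really is a finite product of rescaled de Rham complexes, prove the integrality statement that for a torsion-free smooth $R$ (e.g.\ a polynomial ring) $\Res$ carries $\WSOmega^q_{R(\!(t)\!)}$ into $\WSOmega^{q-1}_{R}$ inside the rationalized complexes (this is where the computation (g) really lives), and then push the construction along a presentation $R\surj A$ using naturality and the surjectivity of $\WSOmega^q_{R(\!(t)\!)}\to\WSOmega^q_{\Att}$. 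The paper's own proof of Lemma \ref{lem:rescan} runs this lifting pattern in miniature, so the mechanism is visible in the text you were given; without one of these inputs your map is never actually constructed, and properties (a)--(g) are being asserted of an object whose existence is the whole content of the proposition.
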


Henceforth, and until the end of this section, 
we suppose that $k$ is a \textit{perfect} field 
with $\Char(k) = p$, 
where $p$ is an odd prime or $0$. 
For a projective smooth curve $C$ over $k$, 
let $K = k(C)$ be the function field of $C$. 
For each closed point $P$ of $C$, 
let $\KP$ be the fraction field of the completion 
$\Ohat_{C,P}$ of $\O_{C,P}$ by 
the normalized valuation $v_P$ 
and $\kP$ the residue field of $\KP$. 
The complete discrete valuation field $\KP$ 
is canonically isomorphic to $\kPtt$. 
This allows us to define the refined residue map 
\begin{equation}
    \label{def:res}
    \dP= \dP^q: 
    \WS\Omega_{K}^q \to \WS\Omega_{K_P}^{q} \stackrel{\Res_t^q}{\to} \WS\Omega_{\kP}^{q-1}.
\end{equation}
By the above proposition, 
$\dP$ is independent on the choice of $t$. 
The ordinary residue map in \cite{Rue07} (and also \cite{Kunz})  
is the composition $\Res_P:= \Tr_{\kP/k}\circ \dP$. 

\begin{theorem}[Residue theorem, \cite{Rue07}, Thm.\ 2.19]
    \label{thm:res}
    Let $S$ be a finite truncation set. 
    The residue map 
    $\dP : \WS\Omega_{K_P}^{q} \to \WS\Omega_{\kP}^{q-1}$
    satisfies 
    \[
      \sum_{P\in C} \Tr_{\kP/k}\circ \dP(\omega) = 0
    \] 
    for any $\omega \in \WSOmega_K^q$.
\end{theorem}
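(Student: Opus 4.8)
The plan is to reduce the statement to the case of the projective line $C=\P^1_k$ and then to verify it there directly. Choose a non-constant rational function on $C$, giving a finite morphism $\pi\colon C\to\P^1_k$; at the level of function fields this is a finite extension $K/k(t)$, where $t$ is the coordinate on $\P^1$. Granting the $\P^1$ case (proved below), the general case follows from the reciprocity
\[
    \sum_{P\mid Q}\Tr_{\kP/k}\circ\dP(\omega)
    = \Tr_{k(Q)/k}\circ\d_Q\bigl(\Tr_{K/k(t)}(\omega)\bigr)
\]
for each closed point $Q$ of $\P^1$, the left sum being over the points $P$ of $C$ above $Q$. Indeed, summing over all $Q$ and applying the $\P^1$ case to the traced form $\Tr_{K/k(t)}(\omega)\in\WSOmega_{k(t)}^q$ annihilates the right-hand side, while the left-hand side collects to $\sum_{P\in C}\Tr_{\kP/k}\circ\dP(\omega)$ after invoking transitivity of the trace (Theorem \ref{thm:trace}(c)).

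The heart of the reduction is this reciprocity, which is a purely local statement at the completion of $k(t)$ at $Q$. Write $K_Q:=k(Q)(\!(s)\!)$ for that completion; then $K\otimes_{k(t)}K_Q\simeq\prod_{P\mid Q}\KP$, and the global trace $\Tr_{K/k(t)}$ localizes to $\sum_{P\mid Q}\Tr_{\KP/K_Q}$. It therefore suffices to check, for each $P\mid Q$, that the residue over the complete field commutes with the local trace, namely
\[
    \Res_{s}\circ\Tr_{\KP/K_Q}
    = \Tr_{\kP/k(Q)}\circ\Res_{t}
\]
as maps $\WSOmega_{\KP}^q\to\WSOmega_{k(Q)}^{q-1}$. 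When $\KP/K_Q$ is unramified or tamely ramified one may pick compatible uniformizers and deduce this from the base-change description of the trace (Theorem \ref{thm:trace}(b)) together with the explicit residue formulas (Proposition \ref{prop:res}(f),(g)); the compatibility of both sides with $V_n$, $F_n$ and restriction (Theorem \ref{thm:trace}(d) and Proposition \ref{prop:res}(c)) then propagates the identity from Teichm\"uller generators to all of $\WSOmega_{\KP}^q$.

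It remains to prove the theorem for $\P^1_k$ itself, where $K=k(t)$. First, by the product decomposition of Proposition \ref{prop:dW}(i)---which is compatible with both $\dP$ and $\Tr$ since these commute with $F_j$ and restriction---we may replace $\WS$ by a single factor $\W_{P\cap S/j}$, reducing to the $p$-typical situation, or, when $\Char(k)=0$, to the ordinary de Rham complex. Next, writing $w=\sum_s V_s([w_s])$ (Lemma \ref{lem:Witt}(ii)) and using Proposition \ref{prop:res}(a), every form is a sum of expressions $V_{n_0}([f_0])\,dV_{n_1}([f_1])\cdots dV_{n_q}([f_q])$ with $f_i\in k(t)$, and a partial-fraction decomposition of the $f_i$ reduces the check to monomial forms supported at a single finite closed point together with the point at infinity. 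For these, the residues at the finite points and at infinity are computed by Proposition \ref{prop:res}(f),(g) and are seen to cancel; in the ordinary case $S=\{1\}$ this is precisely the classical residue theorem for $k(t)/k$.

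The main obstacle is the local trace--residue compatibility $\Res_{s}\circ\Tr_{\KP/K_Q}=\Tr_{\kP/k(Q)}\circ\Res_{t}$ at a point $P$ where $\KP/K_Q$ is inseparable or wildly ramified. In those cases neither the clean base-change formula of Theorem \ref{thm:trace}(b) nor a naive choice of uniformizer applies, and one must fall back on the defining construction and compatibilities of R\"ulling's trace on the de Rham--Witt complex. The hypothesis that $k$ is perfect guarantees that each residue field $\kP$ is separable over $k$, which controls the horizontal direction; it is the vertical (ramification) direction where the genuine difficulty lies.
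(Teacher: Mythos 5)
The first thing to note is that the paper does not prove this statement at all: Theorem \ref{thm:res} is quoted from R\"ulling (\cite{Rue07}, Thm.\ 2.19) and used as a black box. So the comparison is really between your proposal and the proof in \emph{loc.\,cit.} Your architecture --- push forward along a finite morphism $C\to\P^1_k$ via the trace, reduce to an explicit verification for $k(t)$, with the reduction resting on a local trace--residue compatibility --- is the same architecture as the proof in the literature (and as the classical argument for K\"ahler differentials in \cite{Kunz}).

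However, your proposal has a genuine gap at exactly the point where the real content lies. The local identity
\[
  \Res_{s}\circ\Tr_{\KP/K_Q} \;=\; \Tr_{\kP/k(Q)}\circ\Res_{t}
\]
is only argued when $\KP/K_Q$ is unramified or tamely ramified; for the wild case you say one ``must fall back on the defining construction and compatibilities of R\"ulling's trace,'' which is not an argument but a restatement of the problem. Since $\Char(k)=p$ may divide ramification indices, the wild case cannot be avoided, and it is precisely this case that makes the theorem nontrivial: the base-change description of Theorem \ref{thm:trace}(b) is unavailable there, and propagating the identity ``from Teichm\"uller generators'' via $V_n$, $F_n$ and restriction does not work, because the trace of a wildly ramified extension does not act diagonally on any such generating set. (The paper's own Lemmas \ref{lem:rescan} and \ref{lem:trres} do not fill this hole either: the first concerns pullback rather than trace, and the second is restricted to $S=\{1\}$ and reduces to the algebraically closed residue field case, where wildness disappears.) A second, smaller gap: your identification $K\otimes_{k(t)}K_Q\simeq\prod_{P\mid Q}\KP$ requires $K/k(t)$ to be separable, so the non-constant function must be chosen to be a separating element --- possible since $k$ is perfect, but it must be said, as an arbitrary choice in characteristic $p$ can produce nilpotents in the tensor product, breaking the localization of the trace. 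In short, your outline correctly reproduces the known strategy, but the step you defer is the theorem's actual content, so the proposal does not constitute a proof.
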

The residue map is calculated as follows: 
The de Rham-Witt group $\WS\Omega_{K_P}^q$ 
decomposes into the \textit{$p$-typical} de Rham-Witt groups 
$W_m\Omega_{K_P}^q := \W_{\{1,p, \ldots ,p^{m-1}\}}\Omega_{K_P}^q$ 
(Prop.\ \ref{prop:dW}). 
Any element $\omega \in W_m\Omega_{K_P}^q$ can be 
written uniquely as 
\begin{equation}
    \label{eq:exp}
      \omega = \sum_{j\in \Z}a_{0,j}[t]^j + b_{0,j}[t]^{j-1}d[t] 
    + \sum_{j \in \Z\ssm p\Z,\, 1\le s<m} V^s(a_{s,j}[t]^j) + d V^s(b_{s,j}[t]^j).
\end{equation}
for some 
$a_{i,j}\in W_{m-i}\Omega_{\kP}^{q}$, 
$b_{i,j} \in W_{m-i}\Omega_{\kP}^{q-1}$ 
($a_{i,j} = b_{i,j} = 0$ for $j<<0$), 
where 
$V^s := V_{p^s}:W_{m-s}\Omega_{K_P}^q \to W_m\Omega_{K_P}^q$ 
is the Verschiebung 
(\cite{Rue07}, Lem.\ 2.9, see also Rem.\ 2.10). 
The residue map is now given by $\dP(\omega) := b_{0,0}$.  

\begin{definition}
    \label{def:ls} 
    The natural inclusion $K \inj \KP$ 
    gives the \textit{local symbol} 
    \[
        \dP:\WS(K) \otimes_{\Z}K^{\times} \to \WS(\kP);\ 
        w \otimes f \mapsto \dP(w \dlog [f]) 
    \] 
    where $\dlog[f] := d[f]/[f]$. 
    The image of $w\otimes f \in \WS(K) \otimes_{\Z}K^{\times}$ 
    by the local symbol map is denoted by $\dP(w,f) := \dP(w \dlog [f])$. 
    This is essentially Witt's residue symbol (\Cf \cite{Rue07}, Rem.\ 2.13). 
\end{definition}
The local symbol $\dP$ satisfies the conditions of Serre's local symbol 
(\cite{AGCF}, Chap.\ 3, Sect.\ 1). 
In particular, we have 
\begin{equation}
    \label{eq:residue}
    \dP(w, f) = v_P(f)w(P)
\end{equation}
if $w$ is in $\WS(\Ohat_{C,P})$, 
where $w(P)$ is the image of $w$ by the canonical map $\WS(\Ohat_{C,P}) \to \WS(\kP)$. 
The boundary map $\dP:K_q^M(K) \to K_{q-1}^M(\kP)$ on 
Milnor $K$-groups (\cite{BT73}, Prop.\ 4.3) 
and 
the residue map $\dP:\WSOmega_K^q \to \WSOmega_{\KP}^{q-1}$ 
are compatible in the following sense: 
For some field $F$, we denote the image of 
$x_1 \otimes \cdots \otimes x_q 
\in F^{\times}\otimes_{\Z}\cdots \otimes_{\Z}F^{\times}$ in 
$K_q^M(F)$ by $\{x_1,\ldots ,x_q\}$ as usual. 
For $q>0$, define 
\[
  \dlog:K_q^M(F) \to \WSOmega_F^q
\] 
by 
$\{x_1,\ldots, x_q\} \mapsto \dlog [x_1] \cdots \dlog [x_q]$.  
When $q=0$, $\dlog:\Z \to \WS(F)$ is the canonical map. 

\begin{lemma}
    \label{lem:tameres} 
    Let $C$ be a projective smooth curve over $k$ 
    and $P$ a closed point of $C$. 
    Then the following diagram is commutative:
    \[
      \xymatrix@C=15mm{
        K_q^M(k(C)) \ar[r]^{\dlog}\ar[d]_{\dP}& \WSOmega_{k(C)}^q \ar[d]^{\dP}\\
        K_{q-1}^M(\kP) \ar[r]^{\dlog} & \WSOmega_{\kP}^{q-1}.
      }
    \]
\end{lemma}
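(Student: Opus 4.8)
\emph{Proof plan.} The plan is to check the identity $\dP(\dlog\{x_1,\ldots,x_q\}) = \dlog(\dP\{x_1,\ldots,x_q\})$ on symbols, which suffices since both composites are additive and the symbols generate $K_q^M(k(C))$. Write $K = k(C)$ and fix the uniformizer $t$ realizing the canonical isomorphism $\KP \cong \kPtt$, so that $\Ohat_{C,P} \cong \kP[\![t]\!]$. Because the Teichm\"uller map is multiplicative, $[x_1\cdots x_q] = [x_1]\cdots[x_q]$, whence $\dlog\{x_1,\ldots,x_q\} = \dlog[x_1]\cdots\dlog[x_q]$ in $\WSOmega_K^q$. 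Using the antisymmetry of Milnor symbols together with the relation $\{t,t\} = \{-1,t\}$ and the multilinearity $\{\ldots, t^m u,\ldots\} = m\{\ldots,t,\ldots\} + \{\ldots,u,\ldots\}$ for $u\in\Ohat_{C,P}^{\times}$, I would first reduce, as in \cite{BT73}, every symbol to a $\Z$-combination of two types: type (I) symbols $\{u_1,\ldots,u_q\}$ with all $u_i\in\Ohat_{C,P}^{\times}$, and type (II) symbols $\{u_1,\ldots,u_{q-1},t\}$ with $u_i\in\Ohat_{C,P}^{\times}$ and the uniformizer placed in the last slot.

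For a type (I) symbol the boundary map on Milnor $K$-theory vanishes, so the left-then-bottom route gives $0$. For the top-then-right route, each $[u_i]$ is a unit of $\WS(\kP[\![t]\!]) = \WS(\Ohat_{C,P})$, so $\dlog[u_i]\in\WSOmega_{\kP[\![t]\!]}^1$ and the product $\dlog[u_1]\cdots\dlog[u_q]$ lies in $\WSOmega_{\kP[\![t]\!]}^q$; its refined residue vanishes by Proposition \ref{prop:res}(e). Hence both composites are $0$ and the diagram commutes on type (I) symbols.

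For a type (II) symbol $\{u_1,\ldots,u_{q-1},t\}$, the boundary map gives $\dP\{u_1,\ldots,u_{q-1},t\} = \{\ol{u_1},\ldots,\ol{u_{q-1}}\}$, whose image under $\dlog$ is $\dlog[\ol{u_1}]\cdots\dlog[\ol{u_{q-1}}]\in\WSOmega_{\kP}^{q-1}$. For the other route, $\dlog[u_1]\cdots\dlog[u_{q-1}]\dlog[t] = \omega\,\dlog[t]$ with $\omega := \dlog[u_1]\cdots\dlog[u_{q-1}]\in\WSOmega_{\kP[\![t]\!]}^{q-1}$, and Proposition \ref{prop:res}(f) gives $\dP(\omega\,\dlog[t]) = \omega(0)$. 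Since the reduction map $\WSOmega_{\kP[\![t]\!]}^{q-1}\to\WSOmega_{\kP}^{q-1}$ sends $\dlog[u_i]\mapsto\dlog[\ol{u_i}]$, we obtain $\omega(0) = \dlog[\ol{u_1}]\cdots\dlog[\ol{u_{q-1}}]$, which matches. This proves commutativity on the generators, hence on all of $K_q^M(K)$.

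The step I expect to require the most care is the reduction to the two generator types and the accompanying sign bookkeeping. Placing the uniformizer in the last slot by antisymmetry is precisely what makes Proposition \ref{prop:res}(f) applicable without an extra sign; an equivalent computation starting from a general symbol $x_i = t^{v_P(x_i)}u_i$ would instead require matching the graded-commutativity signs produced when moving $\dlog[t]$ to the right against the sign conventions of the tame symbol in \cite{BT73}. Here the standing hypothesis $\Char(k) = p\neq 2$ enters: from $2[-1]\,d[-1] = d([-1]^2) = d[1] = 0$ and the invertibility of $2$ one gets $\dlog[-1] = 0$, so the sign factors $(-1)^{v_P(x_i)v_P(x_j)}$ occurring in the tame symbol are annihilated by $\dlog$ and do not obstruct the comparison.
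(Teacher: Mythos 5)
Your proof is correct and follows essentially the same route as the paper's: pass to the completion $\kPtt$, use the standard generation of $K_q^M$ by symbols whose first $q-1$ entries are units, compute the boundary map via the Bass--Tate formula, and compute the residue via Proposition \ref{prop:res} (e) and (f). The paper keeps the last entry as a general $f = ut^{v_P(f)}$ and splits it inside the computation rather than splitting the generators into your types (I) and (II) beforehand (which is also why it never needs the $\dlog[-1]=0$ sign discussion), but this is the same argument.
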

\begin{proof}
    The valuation map $v_P$ is the boundary map $\dP:K_1(K)\to K_0(\kP)$. 
    From the equality (\ref{eq:residue}), we may assume $q>1$. 
    By considering the completion $k(C)_P$ at $P$, 
    it is enough to show that 
    the following diagram is commutative
    \[
      \xymatrix@C=15mm{
        K_q^M(K) \ar[r]^{\dlog}\ar[d]_{\d}& \WSOmega_{K}^q \ar[d]^{\Res_t^q}\\
        K_{q-1}^M(k) \ar[r]^{\dlog} & \WSOmega_{k}^{q-1}.
      }
    \]
    for a local field $K = k(\!(t)\!)$. 
    It is known that the Milnor $K$-group 
    $K_q^M(K)$ is generated by symbols of the form 
    $\{u_1,\ldots, u_{q-1}, f\}$ with 
    $v_K(u_i) =0$ for all $i$ and $f\in \Kt$. 
    For an element $\{u_1,\ldots, u_{q-1}, f\}$ of this form, 
    we have 
    $\d(\{u_1,\ldots ,u_{q-1}, f\}) = v_K(f) \{\ol{u}_1,\ldots ,\ol{u}_{q-1}\}$  
    in $K_{q-1}^M(k)$, where 
    $\ol{u}_i$ is the image of $u_i$ in $k^{\times}$ (\cite{BT73}, Prop.\ 4.5). 
    On the other hand, 
    write 
    $f = ut^{v_K(f)}$ with $u\in \O_K^{\times}$. 
    \begin{align*}
      \Res_t \circ \dlog (\{u_1,\ldots ,u_{q-1},f\}) 
      & = \Res_t(\dlog [u_1] \cdots \dlog [u_{q-1}]\dlog[u]) \\ 
      & \quad + v_K(f)\Res_t(\dlog [u_1] \cdots \dlog [u_{q-1}]\dlog[t])\\
      & \stackrel{(*)}{=} v_K(f)\dlog [\ol{u}_1]\cdots \dlog [\ol{u}_{q-1}]. 
    \end{align*}
    Here,  
    the last equality $(*)$ follows from Proposition \ref{prop:res} (e) and (f).
\end{proof}

\begin{lemma}
    \label{lem:rescan}
    Let $C' \to C$ be a dominant morphism 
    of smooth projective curves over $k$. 
    For any closed point $P'$ in $C'$, 
    we denote by the point $P$ in $C$ the image of $P'$. 
    Then, we have the following commutative diagram
    \[
    \xymatrix@C=15mm{
      \WS\Omega_{k(C')}^q \ar[r]^{\d_{P'}} & \WS\Omega_{k(P')}^{q-1} \\
      \WS\Omega_{k(C)}^{q}\ar[r]^{\dP}\ar[u] & \WS\Omega_{\kP}^{q-1}\ar[u]_{e(P'/P)},
    }
    \]
    for $q\ge 1$, 
    where the left vertical map is the natural map and the right 
    one is the map multiplication by the ramification index $e(P'/P)$ 
    at $P'$ over $P$.
\end{lemma}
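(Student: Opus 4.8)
The plan is to reduce everything to a statement about the complete local fields and then to exploit the explicit form (\ref{eq:exp}) of the residue together with the fact that $\Res$ commutes with $d$, the Verschiebung and the Frobenius (Proposition \ref{prop:res}(c)). Since $\dP$ and $\d_{P'}$ are by definition (\ref{def:res}) computed after passing to the completions $K:=K_P$ and $L:=K_{P'}$, and since the composites $k(C)\to k(C')\to L$ and $k(C)\to K\to L$ agree (both are the completion at $P'$), it suffices to prove the corresponding local statement: for the finite extension $K\hookrightarrow L$ of complete discretely valued fields, with uniformizers $t$ of $K$ and $s$ of $L$, ramification index $e=e(P'/P)$ and residue fields $\kP\hookrightarrow k(P')$, one has $\Res_s(\omega_L)=e\cdot\iota(\Res_t(\omega))$ for every $\omega\in\WSOmega_K^q$, where $\omega_L$ is the image of $\omega$ and $\iota:\WSOmega_{\kP}^{q-1}\to\WSOmega_{k(P')}^{q-1}$ is the natural map. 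Because $k$ is perfect, $k(P')/\kP$ is separable, so $L/K$ has a maximal unramified subextension $K_1=k(P')(\!(t)\!)$ with $e(K_1/K)=1$, and $L/K_1$ is totally ramified of degree $e$; I would treat these two cases separately. The unramified case is immediate: $K\hookrightarrow K_1$ is induced coefficientwise (same uniformizer $t$) by $\kP\to k(P')$, so $\Res_t(\omega_{K_1})=\iota(\Res_t(\omega))$ is exactly the naturality of $\Res$ in the coefficient ring, Proposition \ref{prop:res}(b).

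For the totally ramified case $\kappa:=k(P')$ is the common residue field, $\iota=\Id$, and $t=us^e$ for a unit $u\in\O_L^{\times}$, so that $\dlog[t]=\dlog[u]+e\,\dlog[s]$. Using the decomposition of Proposition \ref{prop:dW}(i), and the fact that both sides commute with $F_j$ and restriction, I would first reduce to the $p$-typical groups and take $\omega\in W_m\Omega_K^q$, written in its canonical form (\ref{eq:exp}) so that $\Res_t(\omega)=b_{0,0}$. Applying Proposition \ref{prop:res}(a) summand by summand, the terms $a_{0,j}[t]^j$ and the Verschiebung terms $V^n(a_{n,j}[t]^j)$, $dV^n(b_{n,j}[t]^j)$ all reduce, after commuting $\Res_s$ past $d$ and the Verschiebung, to $\Res_s^0([t]^j)$, which vanishes for degree reasons (it lands in $\WSOmega_\kappa^{-1}=0$). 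Hence only the terms $b_{0,j}[t]^{j-1}d[t]$ survive, and by Proposition \ref{prop:res}(a) the whole claim comes down to showing
\[
\Res_s\bigl([t]^{j-1}d[t]\bigr)=e\cdot\delta_{j,0}\qquad\text{in }W_m(\kappa).
\]

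For $j=0$ this is clear: $\Res_s(\dlog[u])=0$ since $\dlog[u]\in\WSOmega_{\kappa[[s]]}^1$ (Proposition \ref{prop:res}(e)), while $\Res_s(e\,\dlog[s])=e$ by Proposition \ref{prop:res}(f). For $j\neq0$ with $p\nmid j$ one writes $[t]^{j-1}d[t]=j^{-1}d([t]^j)$ (legitimate since $\WS\Omega$ is a $\Z_{(p)}$-algebra) and uses that $\Res_s$ commutes with $d$: $\Res_s(d[t]^j)=d(\Res_s[t]^j)=0$.

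\textbf{The main obstacle} is the remaining case $j\neq0$ with $p\mid j$, where $j^{-1}$ is unavailable and the unit $[u]$ mixes with $[s]^{je}$ in a way that defeats the clean regular/polar dichotomy of Proposition \ref{prop:res}(e); Teichm\"uller lifts are multiplicative but not additive, so one cannot simply expand $[u]^j$. The plan is to descend on the $p$-adic valuation of $j$ using the de Rham--Witt identity $F_p\,d[x]=[x]^{p-1}d[x]$ (\cite{Ill79}): writing $j=pj'$, this identity together with $F_p[t]=[t]^p$ gives $[t]^{j-1}d[t]=F_p\bigl([t]^{j'-1}d[t]\bigr)$, whence $\Res_s\bigl([t]^{j-1}d[t]\bigr)=F_p\bigl(\Res_s([t]^{j'-1}d[t])\bigr)$ because $\Res_s$ commutes with $F_p$ (Proposition \ref{prop:res}(c)). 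Iterating until the exponent becomes prime to $p$ reduces to the case already treated and yields $0$. Collecting the three cases gives $\Res_s([t]^{j-1}d[t])=e\,\delta_{j,0}$, hence $\Res_s(\omega_L)=e\,b_{0,0}=e\,\Res_t(\omega)$; combined with the two reductions above this proves the lemma.
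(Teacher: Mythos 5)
Your argument is correct, and it reaches the conclusion by a genuinely different route in the one place where the proof has real content. Both you and the paper make the same preliminary reductions: pass to the completions, split into $p$-typical pieces via Proposition \ref{prop:dW} (i) and Proposition \ref{prop:res} (c), use the expansion (\ref{eq:exp}) together with Proposition \ref{prop:res} (a), (c) to kill every term except the $b_{0,j}[t]^{j-1}d[t]$, and dispose of the unramified part by naturality, so that everything rests on the value of $\Res_s([t]^{j-1}d[t])$ in the totally ramified case. There the paper proceeds by reduction: it first passes to an algebraically closed residue field, and when $\Char(k)=p>0$ it lifts the whole situation to characteristic zero along $W(k)(\!(t)\!)\hookrightarrow W(k)(\!(t')\!)$; in characteristic zero it extracts an $e$-th root of the unit $u$, normalizes the uniformizer so that $t=(t')^e$ exactly, re-expands $\omega$, and reads off the residue from the explicit formula of Proposition \ref{prop:res} (g). You instead stay in the given field and compute with $[t]=[u][s]^e$ directly: multiplicativity of the Teichm\"uller map gives $\dlog[t]=\dlog[u]+e\,\dlog[s]$, so Proposition \ref{prop:res} (e), (f) settle $j=0$; writing $[t]^{j-1}d[t]=j^{-1}d([t]^j)$ settles $j\neq 0$ prime to $p$ (legitimate, since such $j$ are units in $\Z_{(p)}$); and the Witt-complex identity $F_p\,d[x]=[x]^{p-1}d[x]$ together with $\Res\circ F_p=F_p\circ\Res$ settles $p\mid j$ by descent on the $p$-adic valuation of $j$. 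This trades the paper's lifting machinery (its most technical step) for the Frobenius identity, which is entirely within the paper's own toolkit---it is invoked verbatim in the proof of Corollary \ref{cor:WS}. One bookkeeping point you should make explicit: $F_p$ lowers the $p$-typical truncation level by one, so in $\Res_s([t]^{pj'-1}d[t])=F_p\bigl(\Res_s([t]^{j'-1}d[t])\bigr)$ the inner residue lives one level higher; starting the induction at a level raised by the $p$-adic valuation of $j$ (or working with a large truncation set and restricting at the end) makes the descent airtight.
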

\begin{proof}
    By considering the completions $k(C)_P$ and $k(C')_{P'}$, 
    it is enough to show, 
    for a finite extension $K'/K$ of local fields 
    with residue field extension $k'/k$ and $\Char(K)>0$,  
    \[
    \xymatrix@C=15mm{
      \WS\Omega_{K'}^q \ar[r]^{\d_{K'}} & \WS\Omega_{k'}^{q-1} \\
      \WS\Omega_{K}^{q}\ar[r]^{\d_K}\ar[u] & \WS\Omega_k^{q-1}\ar[u]_{e},
    }
    \]
    where $\d_K$ and $\d_{K'}$ are residue maps 
    and $e$ is the multiplication of the ramification index of $K'/K$. 
    The de Rham-Witt group $\WS\Omega_{K}^q$ 
    decomposes into the $p$-typical de Rham-Witt groups 
    $W_m\Omega_{K}^q$ (Prop.\ \ref{prop:dW}, (i)) 
    and considering the restriction maps 
    it is enough to show the equality 
    \begin{equation}
      \label{eq:dK}
     \d_{K'} = e \d_K. 
    \end{equation}
    First we assume that $K'/K$ is unramified.  
    If we fix a uniformizer  $t$ of $K$, 
    then $K' \simeq k'(\!(t)\!)$ and $K \simeq k(\!(t)\!)$.
    Since the residue map is natural (Prop.\ \ref{prop:res}), 
    we have 
    $\d_{K'}(\omega) = \d_K(\omega) \in W_m\Omega_{k'}^{q-1}$ for any $\omega\in W_m\Omega_{K}^q$. 
    Next we assume that the extension $K'/K$ is totally ramified 
    of degree $e$ 
    and thus $k = k'$. 
    Any $\omega\in W_m\Omega_{K}^q$
    can be written as in (\ref{eq:exp}), that is, 
    \[
      \omega 
    = \sum_{j\in \Z}a_{0,j}[t]^j + b_{0,j}[t]^{j-1}d[t] 
      + \sum_{\begin{smallmatrix}j \in \Z\ssm p\Z,\\ 1\le s<m\end{smallmatrix}} 
      V^s(a_{s,j}[t]^j) + d V^s(b_{s,j}[t]^j).
    \] 
    By Proposition \ref{prop:res}, we have 
    \begin{align*}
    \d_{K'}(\omega) = \sum_j b_{0,j}\d_{K'}([t]^{j-1}d[t]),\\
    \d_{K}(\omega) = \sum_j b_{0,j}\d_K([t]^{j-1}d[t]).
    \end{align*}
    Thus, we may assume $q=1$. 
    By Proposition \ref{prop:res} 
    and the injection $W_m(k) \inj W_m(\kbar)$, 
    where $\kbar$ is an algebraically closed field, 
    we can further assume that $k$ is algebraically closed. 
    If $\Char(k) = 0$, 
    then 
    for a uniformizer $t'$ of $K'$ 
    we have $t = (t')^eu$ with $u\in k[\![t']\!]^{\times}$. 
    There exists $v\in k[\![t']\!]^{\times}$ such that 
    $v^e = u$. 
    Replacing $t'$ by $t'v$ we have $t = (t')^e$. 
    From (\ref{eq:exp}), any element 
    $\omega \in W_m\Omega_K^1$ is written of the form
    \begin{align*}
    \omega 
    &= \sum_{j\in \Z}a_{0,j}[t]^j + b_{0,j}[t]^{j-1}d[t] 
      + \sum_{\begin{smallmatrix}j \in \Z\ssm p\Z,\\ 1\le s<m\end{smallmatrix}} 
      V^s(a_{s,j}[t]^j) + d V^s(b_{s,j}[t]^j)\\
    &= \sum_{j\in \Z}a_{0,j}[t']^{ej} + b_{0,j}[t']^{e(j-1)}d[(t')^e] 
      + \sum_{\begin{smallmatrix}j \in \Z\ssm p\Z,\\ 1\le s<m\end{smallmatrix}} 
      V^s(a_{s,j}[t']^{ej}) + d V^s(b_{s,j}[t']^{ej})\\
    &= \sum_{j\in \Z}a_{0,j}[t']^{ej} + e b_{0,j}[t']^{ej-1}d[t'] 
      + \sum_{\begin{smallmatrix}j \in \Z\ssm p\Z,\\ 1\le s<m\end{smallmatrix}} 
      V^s(a_{s,j}[t']^{ej}) +  d V^s(b_{s,j}[t']^{ej}). 
    \end{align*}
    Therefore $\d_{K'}(\omega) = eb_{0,0} = e\d_K(\omega)$ (Prop.\ \ref{prop:res} (g)). 
    Now we consider the case $\Char(k)=p>0$. 
    For a uniformizer $t'$ of $K'$ 
    we have 
    $t = (t')^eu$ with $u = u_0 + u_1 t' + \cdots \in k[\![t']\!]^{\times}$ ($u_0\neq 0$). 
    Put $\wt{u} := [u_0] + [u_1] t' + \cdots \in W(k)[\![t']\!]^{\times}$ 
    and is a lift under the natural map $W(k)[\![t']\!] \to k[\![t']\!]$. 
    There is an inclusion of $W(k)$-algebras $W(k)[\![t]\!] \inj W(k)[\![t']\!]$ 
    which sends $t$ to $(t')^e\wt{u}$.  
    It extends to $W(k)(\!(t)\!) \inj W(k)(\!(t')\!)$ and is also a lift of 
    the inclusion $K = k(\!(t)\!) \inj K' = k(\!(t')\!)$. 
    By the naturality given in Proposition \ref{prop:res} 
    and the natural map $W_m\Omega_{W(k)(\!(t)\!)}^1 \surj W_m\Omega_{k(\!(t)\!)}^1$ 
    is surjective, 
    it suffices to prove the equality (\ref{eq:dK}) 
    for $K,K',k$ replaced by 
    $W(k)(\!(t)\!), W(k)(\!(t')\!), W(k)$. 
    Now we denote by $E$ the fraction field of $W(k)$. 
    The inclusion $W(k)(\!(t)\!) \inj W(k)(\!(t')\!)$ 
    extends to $E(\!(t)\!) \inj E(\!(t')\!)$. 
    Since the natural map $W_m(W(k)) \inj W_m(E)$ 
    is injective, 
    the naturality of the residue map 
    reduces us to the case $\Char(k) = 0$. 
    Finally, 
    for arbitrary finite extension $K'/K$, 
    let $K^{\ur}$ be the maximal unramified subextension of $K'$ over $K$. 
    Then the required equality (\ref{eq:dK}) follows from 
    \[
      \d_{K'}(\omega) = e \d_{K^{\ur}}(\omega) = e \d_{K}(\omega)
    \]
    for any $\omega \in W_m\Omega_K^1$. 
\end{proof}

\section{Somekawa $K$-groups}
\label{sec:Somekawa}
Throughout this section, $k$ is a perfect field. 

\begin{definition}
    A \textit{Mackey functor} $A$ (over $k$)  
    is a contravariant 
    functor from the category of \'etale schemes over $k$ 
    to the category of abelian groups 
    equipped with a covariant structure 
    for finite morphisms 
    such that 
    $A(X_1 \sqcup X_2)  = A(X_1) \oplus A(X_2)$ 
    and if 
    \[
    \xymatrix@C=15mm{
      X' \ar[d]_{f'}\ar[r]^{g'} & X \ar[d]^{f} \\
      Y' \ar[r]^{g} & Y
    }
    \]
    is a Cartesian diagram, then the induced diagram 
    \[
    \xymatrix@C=15mm{
      A(X') \ar[r]^{{g'}_{\ast}} & A(X)\\
      A(Y') \ar[u]^{{f'}^{\ast}} \ar[r]^{{g}_{\ast}} & A(Y)\ar[u]_{f^{\ast}}
    }
    \]
    commutes. 
\end{definition}

For a Mackey functor $A$, 
we denote by $A(E)$ 
its value $A(\Spec(E))$  
for a field extension $E$ over $k$.

\begin{definition}
    For Mackey functors $A_1,\ldots ,A_q$, 
    their \textit{Mackey product} 
    $A_1\otimesM \cdots \otimesM A_q$ 
    is defined as follows: 
    For any finite field extension $k'/k$,    
    \[
        k' \mapsto 
        A_1\otimesM \cdots \otimesM A_q(k') 
         := \left(\bigoplus_{E/k':\, \mathrm{finite}} A_1(E) \otimes_{\Z}\cdots \otimes_{\Z}A_q(E)\right) /R, 
    \]
    where $R$ is the subgroup generated 
    by elements of the following form: 

    \sn
    (PF) 
    For any finite field extensions 
    $k' \subset E_1 \subset E_2$,  
    if $x_{i_0} \in A_{i_0}(E_2)$ and $x_i \in A_i(E_1)$ 
    for all $i\neq i_0$, then 
    \[
        j^{\ast}(x_1) \otimes \cdots \otimes x_{i_0} \otimes \cdots \otimes j^{\ast}(x_q) 
        - x_1\otimes \cdots \otimes j_{\ast}(x_{i_0})\otimes \cdots \otimes x_q,
    \]
    where $j:\Spec(E_2) \to \Spec(E_1)$ is the canonical map. 
\end{definition}

\sn
This product gives a tensor product in the abelian category 
of Mackey functors with unit $\Z:k' \mapsto \Z$. 
We write $\{x_1,\ldots,x_q\}_{E/k}$ 
for the image of 
$x_1 \otimes \cdots \otimes x_q \in 
A_1(E) \otimes_{\Z}\cdots \otimes_{\Z}A_q(E)$ in the product 
$A_1\otimesM \cdots \otimesM A_q(k)$. 
For any finite field extension $k'/k$ 
and the canonical map $j=j_{k'/k}:k\inj k'$, 
the pull-back 
\[
    \Res_{k'/k} := j^{\ast}: A_1\otimesM \cdots \otimesM A_q(k) \longrightarrow 
    A_1\otimesM \cdots \otimesM A_q(k')
\] 
is called the \textit{restriction map}. 
We recall here the construction of the restriction 
$\Res_{k'/k}$. 
For any finite extension $E/k$, 
we have $E\otimes_k k' = \bigoplus_{i=1}^n E_i$ 
where $E_i$ is a separable field extension over $k'$. 
Denoting $j_i:=j_{E_{i}/E}:E \inj E_{i}$, the restriction map is 
\[
  \Res_{k'/k}(\{x_1,\ldots ,x_q\}_{E/k}) := 
    \sum_{i=1}^n \{j_i^{\ast}(x_1),\ldots ,j_i^{\ast}(x_q)\}_{E_i/k'}. 
\]
On the other hand, the push-forward 
\[
  N_{k'/k} := j_{\ast}: A_1\otimesM \cdots \otimesM A_q(k') \longrightarrow 
    A_1\otimesM \cdots \otimesM A_q(k)
\] 
is given by $N_{k'/k}(\{x_1,\ldots ,x_q\}_{E'/k'}) = \{x_1,\ldots, x_q\}_{E'/k}$ 
and is called the \textit{norm map}. 

An algebraic group $G$ over $k$ 
forms a Mackey functor which is given by $E\mapsto G(E)$.  
For a field extension $E_2/E_1$, 
the pull-back 
$j^{\ast}= \Res_{E_2/E_1}:G(E_1)\inj G(E_2)$ 
is the canonical map given by $j:E_1 \inj E_2$. 
For simplicity, 
we sometimes identify elements in $G(E_1)$ 
with their images in $G(E_2)$ 
and 
the restriction map 
$\Res_{E_2/E_1}$ will be omitted. 
If the extension $E_2/E_1$ is finite, 
the push-forward is written as 
$j_{\ast} = N_{E_2/E_1}:G(E_2)\to G(E_1)$ 
and is referred to as the norm map.
For the function field $K = k(C)$ 
of a proper smooth curve $C$ over $k$ and 
a semi-abelian variety $G$ over $k$, 
the local symbol map 
\begin{equation}
    \label{eq:local_symbol sa}
    \dP:G(\KP) \otimes_{\Z}\KPt \to G(\kP)
\end{equation}
is defined in $\cite{Som90}$.  
Denoting by $\dP(g,f) := \dP(g\otimes f) \in G(\kP)$ 
it satisfies $\dP(g, f) = v_P(f)g(P)$ 
if $g \in G(\Ohat_{C,P})$,
where $g(P)\in G(\kP)$ denotes the image of $g$ under the canonical map 
$G(\Ohat_{C,P}) \to G(\kP)$, 
$v_P$ is the valuation associated to the point $P$.  
As a special case, for the multiplicative group $G = \Gm$, 
the local symbol map is the tame symbol map 
(= the boundary map $\d_P: K_2(K) \to K_1(\kP)$) 
which is given by
\begin{equation}
    \label{eq:tame}
    \dP:\Gm(K)\otimes_{\Z}K^{\times} \to \Gm(\kP);\ 
    g\otimes f \mapsto (-1)^{v_P(g)v_P(f)}\frac{g^{v_P(f)}}{f^{v_P(g)}}(P).
\end{equation}


\begin{definition}
    \label{def:mult}
    Let $K$ be a discrete valuation field 
    with residue field $k$ 
    and 
    $G = T\times A$ a split semi-abelian variety 
    with torus $T$ and an abelian variety $A$ over $k$. 
    For any $g = (t,x) \in T(K) \times A(K) = G(K)$, 
    let $L/K$ be a finite unramified extension 
    with residue field $E$ 
    such that $T_E \simeq ({\Gm})^{\oplus n}$. 
    We denote by $(g_i)_{1\le i\le n}\in \Gm(L)^{\oplus n}$ 
    the image of $t$ by 
    $T(K) \inj T(L) \simeq \Gm(L)^{\oplus n}$. 
    We define  
    \[
    m_K(g) := \begin{cases}
                  \infty,& \mbox{if $t = 0$},\\
                  \min\{v_L(1-g_i)\ |\  1\le i \le n\}, & \mbox{otherwise}.
                 \end{cases}
    \] 
    Here 
    $v_L$ is the valuation of $L$.
    \end{definition}

    \begin{definition}
    \label{def:Somekawa}
    Let $S$ be a finite truncation set and  
    $G_1,\ldots ,G_{q}$ split semi-abelian varieties over $k$. 
    The Milnor type $K$-group 
    $K(k;\WS, G_1,\ldots,G_{q})$ 
    which we also call the \textit{Somekawa $K$-group} 
    is given by the quotient 
    \[ 
      K(k;\WS, G_1,\ldots,G_{q}) 
     := \left(\WS \otimesM G_1\otimesM \cdots \otimesM G_{q}(k)\right)/R
    \]
    modulo the subgroup $R$ generated by elements of the following form:

    \sn
    (WR) Let $K = k(C)$ be the function field of 
    a projective smooth curve $C$ over $k$. 
    Put $G_0:= \WS$.  
    For $g_i\in G_i(K)$ and $f\in K^{\times}$,  
    assume that for each closed point $P$ in $C$ there exists 
    $i(P)$ ($0\le i(P) \le q$) 
    such that $g_i \in G_i(\Ohat_{C,P})$ for all $i\neq i(P)$. 
    If $g_0 \not \in \WS(\Ohat_{C,P})$ and $q\ge 1$, 
    then we further assume, for any $s\in S$ with $v_P(\gh_s(g_0)) <0$,   
    \begin{equation}
    \label{eq:modulus}
       (m+1)v_P(\gh_s(g_0)) + \sum_{i=1}^{q}m_P(g_i) + v_P(1-f) \ge 0,
    \end{equation}
    where $m_P(g_i) := m_{K_P}(g_i)$ (Def.\ \ref{def:mult}), 
    $\gh_s:\WS(K)\to K$ is the ghost map (\ref{eq:ghost}) 
    and $m := \max\{s \in S\}$. 
    Then 
    \[ 
      \sum_{P \in C_0}g_0(P)\otimes \cdots \otimes \dP(g_{i(P)}, f)\otimes \cdots \otimes g_{q}(P) \in R.
    \]
    Here $C_0$ is the set of closed points in $C$, 
    $g_i(P) \in G_i(k(P))$ 
    denotes the image of $g_i$ under the canonical map $G_i(\Ohat_{C,P}) \to G_i(\kP)$  
     and 
    $\dP:G_i(K_P)\otimes_{\Z} K^{\times}_P \to G_i(\kP)$ 
    is the local symbol (Def.\ \ref{def:ls} for $i=0$ and (\ref{eq:local_symbol sa}) for $i>0$). 
\end{definition}

Note that the expression above 
$g_0(P)\otimes \cdots \otimes \dP(g_{i(P)},f)\otimes \cdots \otimes g_{q}(P)$ 
is independent of the choice of $i(P)$ in the case 
$g_i \in G_i(\Ohat_{C,P})$ for all $i$. 
In fact, 
we have  
\[
g_0(P)\otimes \cdots \otimes \dP(g_{i(P)}, f)\otimes \cdots \otimes g_{q}(P)
  = v_P(f)g_0(P) \otimes \cdots \otimes g_{q-1}(P).
\]
We also write $\s{x,x_1,\ldots,x_q}_{E/k}$ for the image of 
$x\otimes x_1\otimes \cdots \otimes x_q 
\in \WS(E) \otimes_{\Z}G_1(E)\otimes_{\Z}\cdots \otimes_{\Z}G_q(E)$ in $K(k;\WS, G_1,\ldots ,G_q)$. 
By multiplication on the first argument, 
$K(k, \WS, G_1,\ldots ,G_q)$ has the structure of 
a $\WS(k)$-module. 
The various functorial properties of the Somekawa $K$-groups 
hold as stated in \cite{Som90}:  
For a finite field extension $k'/k$, 
set $K(k';\WS,G_1,\ldots,G_q) := K(k';\WS\otimes_k k', G_1\otimes_k k' ,\ldots ,G_q\otimes_kk')$. 
The restriction map on the Mackey product induces a canonical homomorphism 
\[
  \Res_{k'/k}:K(k;\WS, G_1,\ldots,G_q) \longrightarrow K(k';\WS, G_1,\ldots,G_q). 
\]
The norm map 
\[
  N_{k'/k}:K(k';\WS,G_1,\ldots,G_q) \longrightarrow K(k;\WS,G_1,\ldots,G_q)
\] 
is defined on symbols by $N_{k'/k}(\s{x,x_1,\ldots ,x_q}_{E'/k'}) = \s{x,x_1,\ldots ,x_q}_{E'/k}$. 

%
\begin{lemma}
    \label{lem:isom}
    $\mathrm{(i)}$ There exists an isomorphism 
    \[
      K(k;\WS) \isomto \WS(k). 
    \]

    \sn
    $\mathrm{(ii)}$ For split semi-abelian varieties $G$ and $G'$ over $k$, we have 
    \[
      K(k;\WS, G\oplus G') \simeq K(k; \WS,G) \oplus K(k;\WS,G').
    \]
    \end{lemma}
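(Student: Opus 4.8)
The plan is to handle the two parts by entirely different mechanisms: part (i) reduces to the residue theorem, while part (ii) is in essence the bilinearity of the Mackey product, the only genuinely non-formal point being that the modulus condition (\ref{eq:modulus}) respects a direct-sum decomposition.

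For part (i), I would first isolate the purely formal statement that for any \emph{single} Mackey functor $A$ the covariant (norm) maps assemble into an isomorphism $A\otimesM(k)\isomto A(k)$. Concretely, consider $\Phi\colon\bigoplus_{E/k}A(E)\to A(k)$ defined by $(x_E)_E\mapsto\sum_E N_{E/k}(x_E)$. Transitivity of the norm, $N_{E_2/k}=N_{E_1/k}\circ N_{E_2/E_1}$, shows that $\Phi$ annihilates every generator of the (PF)-subgroup, so it descends to $A\otimesM(k)$; conversely (PF) with $E_1=k$ identifies the class of $(x_E)_E$ with the class of $\bigl(\sum_E N_{E/k}(x_E)\bigr)$ placed in the $k$-component, which exhibits $\Phi$ as an isomorphism with inverse $x\mapsto\{x\}_{k/k}$. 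Applying this to $A=\WS$ gives $\WS\otimesM(k)\isomto\WS(k)$, the covariant structure on $\WS$ being the Witt-ring trace $\Tr_{E/k}$ (Theorem \ref{thm:trace}(a)).

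It then remains to check that passing to the further quotient by (WR) changes nothing. When $q=0$ the relation (WR) reads $\sum_{P}\dP(g_0,f)\in R$ for $g_0\in\WS(K)$ and $f\in K^{\times}$, with no modulus condition imposed. Under the isomorphism $\Phi$ above this element maps to $\sum_P\Tr_{\kP/k}\bigl(\dP(g_0\dlog[f])\bigr)$, which vanishes by the residue theorem (Theorem \ref{thm:res}) applied to $\omega=g_0\dlog[f]\in\WS\Omega_K^1$. Since $\Phi$ is injective, the (WR)-relations are already trivial in $\WS\otimesM(k)$, whence $K(k;\WS)\isomto\WS(k)$.

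For part (ii), I would work with the inclusions $\iota\colon G\inj G\oplus G'$, $\iota'\colon G'\inj G\oplus G'$ and the projections $\pi,\pi'$, which are homomorphisms of algebraic groups and hence morphisms of Mackey functors. The first task is to verify that each induces a well-defined map on $K$-groups, i.e.\ respects (WR). As local symbols commute with group homomorphisms and the integrality condition $g\in G(\Ohat_{C,P})$ is preserved, the only point to check is (\ref{eq:modulus}), which enters through the quantities $m_P(g_i)$ (Def.\ \ref{def:mult}). Writing $G=T\times A$ and $G'=T'\times A'$, the torus of $G\oplus G'$ is $T\times T'$; under $\iota$ the adjoined torus coordinates are the identity, contributing $v_L(1-1)=\infty$ to the minimum, so $m_P(\iota(h))=m_P(h)$, whereas under $\pi$ one has $m_P(\pi(g))\ge m_P(g)$ because $m_P$ is a minimum over torus coordinates. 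Thus inclusions preserve (\ref{eq:modulus}) exactly and projections only weaken it, so all four maps descend to the $K$-groups. Finally I would assemble $\alpha:=\iota_{\ast}\oplus\iota'_{\ast}$ and $\beta:=(\pi_{\ast},\pi'_{\ast})$ and check they are mutually inverse: the relations $\pi\iota=\Id$, $\pi'\iota'=\Id$, $\pi\iota'=0=\pi'\iota$ give $\beta\circ\alpha=\Id$, while for $\alpha\circ\beta=\Id$ it suffices to evaluate on a symbol $\{x,(h,h')\}_{E/k}$, where $\Z$-bilinearity of $\WS(E)\otimes_{\Z}(G\oplus G')(E)$ yields $\{x,(h,h')\}=\{x,(h,0)\}+\{x,(0,h')\}=\iota_{\ast}\{x,h\}+\iota'_{\ast}\{x,h'\}$, which is precisely $\alpha\beta\{x,(h,h')\}$. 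The main obstacle is the single non-formal step, namely confirming that the modulus condition (\ref{eq:modulus}) is compatible with the inclusions and projections; everything else is bookkeeping with the Mackey formalism and bilinearity.
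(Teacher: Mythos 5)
Your proof is correct; part (ii) is essentially the paper's argument, while part (i) reaches the conclusion by a genuinely different route. For (ii), the paper uses the same two maps $\{w,g\}_{E/k}\mapsto\{w,(g,1)\}_{E/k}$ and $\{w,(g,g')\}_{E/k}\mapsto(\{w,g\}_{E/k},\{w,g'\}_{E/k})$ and settles (WR) by the single identity $m_P((g,g'))=\min\{m_P(g),m_P(g')\}$, of which your two facts $m_P(\iota(h))=m_P(h)$ and $m_P(\pi(g))\ge m_P(g)$ are exactly the two halves; your verification that $\alpha$ and $\beta$ are mutually inverse is the same bookkeeping written out in more detail. For (i), the paper, like you, obtains surjectivity of $w\mapsto\{w\}_{k/k}$ from (PF), but it proves injectivity by base change: it embeds $\WS(k)\inj\WS(\kbar)$ and compares $K(k;\WS)$ with $K(\kbar;\WS)$; over $\kbar$ every finite extension is trivial, so the Mackey product collapses to $\WS(\kbar)$, and the (WR) relations vanish by the residue theorem (Thm.\ \ref{thm:res}), in which no trace maps appear because every residue field is $\kbar$. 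You instead construct the inverse over $k$ itself: the trace maps yield the formal isomorphism $\Phi$ from the one-factor Mackey product $\bigl(\bigoplus_{E/k}\WS(E)\bigr)/(\mathrm{PF})$ onto $\WS(k)$, and the residue theorem in its full trace form $\sum_P\Tr_{\kP/k}\circ\dP=0$ shows the (WR) generators already die in that Mackey product, so the quotient defining $K(k;\WS)$ is trivial. Your route avoids the passage to $\kbar$ entirely --- and hence avoids having to interpret $K(\kbar;\WS)$ and restriction along the infinite extension $\kbar/k$ --- and it records the slightly stronger intermediate fact that the (WR) relations are trivial in the Mackey product before any quotient is taken; the paper's route only ever needs the residue theorem over an algebraically closed field, where it is a bare statement about sums of residues. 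Both arguments rest on the same two ingredients, (PF) and Theorem \ref{thm:res}, so the difference is one of packaging rather than substance, but yours is the more self-contained of the two.
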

\begin{proof}
    (i) 
    We define a map $\phi:\WS(k) \to K(k;\WS)$ by 
    $\phi(w) := \s{w}_{k/k}$. 
    This is surjective from the property (PF). 
    For the algebraic closure $\kbar$ of $k$, 
    considering the commutative diagram 
    \[
    \xymatrix@C=15mm{
    \WS(k) \ar@{^{(}->}[d]\ar@{->>}[r]^{\phi} & K(k;\WS) \ar[d] \\
    \WS(\kbar) \ar@{->>}[r]^{\phi} & K(\kbar; \WS)
    }
    \]
    we may assume $k = \kbar$. 
    The Somekawa $K$-group 
    $K(k;\WS)$ is now the quotient of $\WS(k)$ by 
    the subgroup generated by the relations of the form (WR). 
    By the residue theorem (Thm.\ \ref{thm:res}), 
    the local symbol map satisfies 
    $\Sigma_P\d_P(w, f) = 0$ in $\WS(k)$ 
    for some function field $K = k(C)$ of one variable over $k$, 
    $w\in \WS(K)$ and $f\in \Gm(K)$. 
    The assertion follows from it. 

    \sn
    (ii) Since the Mackey product $\ \otimesM\ $ commutes with the direct sum, 
    it is enough to show that 
    the maps 
    \begin{align*}
      &K(k;\WS,G) \to K(k;\WS, G\oplus G'); \{w, g\}_{E/k} \mapsto \{w, (g,1)\}_{E/k}, \mbox{and}\\
      &K(k;\WS, G\oplus G') \to K(k;\WS, G)\oplus K(k;\WS, G'); 
        \{w, (g,g')\}_{E/k} \mapsto (\{w, g\}_{E/k}, \{w,g'\}_{E/k})
    \end{align*}
    are well-defined. 
    They follow from $m_P((g,g')) = \min\{m_P(g), m_P(g')\}$ 
    for $(g,g')\in (G\oplus G')(K)$.
\end{proof}

For a finite field $k$ and $q\ge 2$, 
it is known that 
$G_1\otimesM \cdots \otimesM G_q(k) = 0$ 
for semi-abelian varieties $G_i$ 
and hence $K(k;G_1,\ldots ,G_q) = 0$ (\cite{Kahn92}).  
In the following, 
we show that 
the Mackey product $\WS\otimesM G_1\otimesM \cdots \otimesM G_q(k)$ 
becomes trivial 
for arbitrary perfect field of positive characteristic.  
However, 
the Mackey product $\Ga\otimesM \Ga (k)$  
is not trivial even if $k$ is a finite field. 
In fact, we always have a surjective map 
$\Ga \otimesM \Ga(k) \surj k$
given by $\{x,y\}_{E/k} \mapsto \Tr_{E/k}(xy)$ (see also \cite{H12}). 

\begin{lemma}
    \label{lem:trivial}
    Let $k$ be a perfect field with $\Char(k) = p>0$. 
    Let $G_1,\ldots, G_q$ be semi-abelian varieties over $k$ 
    for $q\ge 1$, 
    and $S$ a finite truncation set. 
    Then $\WS\otimesM G_1\otimesM \cdots \otimesM G_q = 0$ 
    as a Mackey functor. 
    In particular, we have   
    \[
      K(k;\WS,G_1,\ldots ,G_q)= 0 
    \] 
    for split semi-abelian varieties $G_1,\ldots ,G_q$.
\end{lemma}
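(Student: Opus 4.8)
The plan is to peel the Witt coefficient off, reduce to a single additive factor, and then exploit that in characteristic $p$ the group $\Ga$ is killed by $p$ while the semi-abelian factors are $p$-divisible over a perfect field. First I would reduce the coefficient $\WS$ to $\Ga=\W_{\{1\}}$. By Lemma \ref{lem:Witt}~(ii) every $w\in\WS(E)$ is a sum $\sum_s V_s([w_s])$, so it suffices to treat symbols $\s{V_s([a]),g_1,\ldots,g_q}_{E/k}$. More structurally, the Verschiebung and restriction maps are morphisms of Mackey functors --- they commute with the trace and with restriction by Theorem \ref{thm:trace}~(d) and Proposition \ref{prop:res}~(c) --- so they assemble $\WS$ into a finite filtration of Mackey functors whose successive quotients are all isomorphic to $\Ga$. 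Since $\otimesM$ is a right-exact tensor product on the category of Mackey functors, it is then enough to prove $\Ga\otimesM G_1\otimesM\cdots\otimesM G_q=0$.

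Next I would settle the toric case, which carries the essential mechanism. Over a perfect field $k$ every finite extension $E$ is perfect, so every element of $E^{\times}$ is a $p$-th power; hence for a torus $T$ the group $T(E)$ is $p$-divisible, while $\Ga(E)=(E,+)$ is killed by $p$. Therefore, if some $G_i$ is a torus, then already $\Ga(E)\otimesZ G_i(E)=0$: writing $g_i=p\cdot h$ in the $\Z$-module $G_i(E)$, we get $a\otimes g_i=p(a\otimes h)=(pa)\otimes h=0$. This annihilates every generator, disposing of the multiplicative case $G_i=\Gm$ and, via Lemma \ref{lem:isom}~(ii), of the torus part of any split semi-abelian variety.

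It remains to kill an abelian-variety factor $A$, where the same divisibility principle must now be supplied by the isogeny ``multiplication by $p$'', which is surjective on $\kbar$-points. Given $\s{a,g}_{E/k}$ with $g\in A(E)$, I would pass to a finite extension $E'/E$ over which a $p$-division point $g'$ of $g$ exists and transport the relation downward by the projection formula (PF): from $p\cdot g'=g$ over $E'$ and $p\cdot a=0$ one extracts $[E':E]\,\s{a,g}_{E/k}=0$, while $p\,\s{a,g}_{E/k}=0$ holds automatically. Over $\kbar$, and over any $E$ for which $A(E)$ is $p$-divisible, this already yields the vanishing; perfectness even gives such divisibility whenever the kernel of the Verschiebung $V_A\colon A^{(p)}\to A$ is infinitesimal, since then $H^1(E,\ker V_A)=0$ because $H^1(E,\mu_p)=H^1(E,\alpha_p)=0$ for perfect $E$.

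The main obstacle will be the descent to fields over which $A(E)$ genuinely fails to be $p$-divisible --- typically finite fields with $A$ ordinary, where the étale part of the $p$-torsion contributes a non-zero $H^1$. There the $p$-division point $g'$ lives only on an extension of $p$-power degree, so the relations $p\cdot(-)=0$ and $[E':E]\cdot(-)=0$ carry the same $p$-primary information and do not combine to the identity. Resolving this is the crux: I expect either to invoke the vanishing of the corresponding Mackey product over finite fields in the spirit of \cite{Kahn92}, or, staying inside the present framework, to factor the isogeny as ``multiplication by $p$''$=V_A\circ F_A$ and use that the relative Frobenius $F_A$ is bijective on $E$-points for perfect $E$, pushing the residual étale Verschiebung cokernel through the norm maps furnished by (PF).
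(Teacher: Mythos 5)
Your reductions are essentially sound: the Verschiebung filtration of $\WS$ has graded pieces isomorphic to $\Ga$ as Mackey functors (this uses Theorem \ref{thm:trace} (d), as you say), the Mackey product is right exact, and the torus case follows from $p$-divisibility of $T(E)$ for perfect $E$ together with $p\cdot\Ga(E)=0$. (The paper reduces instead to the $p$-typical quotients $W_m$, killed by $p^m$; the two reductions play identical roles.) But the abelian-variety factor --- which you yourself call the crux --- is a genuine gap, and neither of your proposed repairs closes it. The result of \cite{Kahn92} concerns products $G_1\otimesM\cdots\otimesM G_q$ of semi-abelian varieties over a \emph{finite} field; it says nothing about $\Ga\otimesM A$ over a general perfect field, and mixed products with $\Ga$ behave differently (the paper notes that $\Ga\otimesM\Ga(k)\neq 0$ even for $k$ finite, so one cannot expect a purely formal vanishing principle). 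The factorization $p=V_A\circ F_A$ hits the same wall you started from: $F_A$ is bijective on $E$-points for perfect $E$, but the failure of surjectivity of $V_A$ on $E$-points is measured by a torsor under the \'etale part of $\Ker V_A$, which is a $p$-group, so it is trivialized only over extensions of $p$-power degree --- exactly the degrees that make your norm argument circular.

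The missing idea is to transport the additive element \emph{up}, not the division point \emph{down}, using surjectivity of the trace. Given $\s{a,g}_{k/k}$ with $a\in\Ga(k)$ and $g\in G(k)$, pick a finite extension $j:k\inj E$ with $j^{\ast}(g)=p\wt{g}$ for some $\wt{g}\in G(E)$ (possible because $G(\kbar)$ is divisible), and pick $\wt{a}\in E$ with $\Tr_{E/k}(\wt{a})=a$; such $\wt{a}$ exists because $k$ is perfect, so $E/k$ is separable and the trace is surjective. Then (PF) gives
\[
\s{a,g}_{k/k}=\s{\Tr_{E/k}(\wt{a}),g}_{k/k}=\s{\wt{a},j^{\ast}(g)}_{E/k}
=\s{\wt{a},p\wt{g}}_{E/k}=\s{p\wt{a},\wt{g}}_{E/k}=0,
\]
since $p\wt{a}=0$ in $E$. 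No degree $[E:k]$ ever appears because the norm map is never invoked; the surjective trace replaces it. This is exactly how the paper argues (with $W_m$ in place of $\Ga$: the trace $W_m(E)\to W_m(k)$ is surjective for any finite extension of a perfect field, and $p^mW_m(E)=0$), after first using the norm to reduce an arbitrary symbol $\s{w,g}_{E/k}$ to one of the form $\s{w,g}_{E/E}$ over the perfect field $E$. With this substitution your outline becomes a complete proof, and your case division between tori and abelian varieties becomes unnecessary: divisibility of $G(\kbar)$ handles every semi-abelian $G$ at once.
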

\begin{proof}
    It is enough to show $\WS\otimesM G(k) = 0$ 
    for a semi-abelian variety $G$ over a perfect field $k$. 
    Since the generalized Witt group is  
    a finite product of Witt groups $W_m$ (Prop.\ \ref{prop:dW}),   
    the assertion is reduced to showing $W_m\otimesM G(k) =0$. 
    Any symbol $\{w,g\}_{E/k}$ in $W_m\otimesM G(k)$ 
    is in the image of the norm map 
    $N_{E/k}:W_m\otimesM G(E) \to W_m\otimesM G(k)$, 
    without loss of generality, 
    we may assume $E = k$ and show $\{w,g\}_{k/k} = 0$. 
    Take a finite field extension $j:k \inj E$ 
    such that $j^{\ast}(g) = p^m\wt{g}$ for some $\wt{g} \in G(E)$. 
    Since the trace maps on the Witt groups are surjective,  
    there exists $\wt{w} \in W_m(E)$ such that $\Tr_{E/k}(\wt{w}) =w$. 
    The projection formula (PF) implies  
    \begin{align*}
      \{w,g\}_{k/k} &= \{\Tr_{E/k}(\wt{w}), g\}_{k/k}\\
                    &= \{\wt{w}, j^{\ast}(g) \}_{E/k}\\
                    &= \{\wt{w}, p^m\wt{g} \}_{E/k}\\
                    &= \{p^m\wt{w}, \wt{g}   \}_{E/k}\\
                    &= 0.
    \end{align*}
    The assertion follows. 
\end{proof}

Recall that the isomorphism 
\[
    \psi: K(k;\overbrace{\Gm,\ldots ,\Gm}^{q}) \isomto K^M_{q}(k)
\] 
is given by $\psi(\{x_1,\ldots ,x_{q}\}_{E/k})= N_{E/k}(\{x_1,\ldots ,x_{q}\})$, 
where $N_{E/k}$ 
is the norm map on Milnor $K$-groups (\cite{Som90}, Thm.\ 1.4).
The inverse map $\phi$ is  
$\phi(\{x_1, \ldots ,x_q\}) = \{x_1 ,\ldots ,x_q\}_{k/k}$. 
In the same manner 
here we show that 
the Somekawa $K$-group 
taking the additive group $\Ga = \W_{1}$ and the multiplicative 
groups $\Gm$ coincides with the space of K\"ahler differentials.

\begin{theorem}
    \label{thm:main}
    Let $k$ be a perfect field. 
    As $k$-vector spaces, we have a canonical isomorphism 
    \[
    \psi:K(k;\Ga,\overbrace{\Gm,\ldots,\Gm}^{q-1}) \isomto \Omega_k^{q-1}.
    \] 
\end{theorem}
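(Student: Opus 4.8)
Since $\Ga=\W_1$, throughout we take the truncation set $S=\{1\}$, for which $\WS\Omega_F^{\bullet}=\Omega_F^{\bullet}$, the Teichm\"uller lift $[\cdot]$ is the identity, and the local symbol of Definition \ref{def:ls} reduces to the ordinary residue $\dP(w,f)=\Res_t(w\,\dlog f)$. The plan is to construct $\psi$ together with an explicit inverse $\phi$. I would define $\psi$ on symbols by
\[
  \psi(\s{x_0,x_1,\ldots,x_{q-1}}_{E/k}):=\Tr_{E/k}\bigl(x_0\,\dlog x_1\cdots\dlog x_{q-1}\bigr),
\]
with $\Tr_{E/k}\colon\Omega_E^{q-1}\to\Omega_k^{q-1}$ the trace of Theorem \ref{thm:trace}. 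To see that $\psi$ is well defined I would check that it kills both families of relations: the projection-formula relation (PF) goes to $0$ because $\Tr_{E/k}$ is a map of $\Omega_k^{\bullet}$-modules (Theorem \ref{thm:trace}(b),(c)), together with the compatibility $\Tr_{E/k}(\dlog f)=\dlog N_{E/k}(f)$ in the multiplicative slots and $\Tr^0_{E/k}=N_{E/k}$ (the additive transfer) in the additive slot; the reciprocity relation (WR) goes to $0$ by applying the residue theorem (Theorem \ref{thm:res}) to $\omega=x_0\,\dlog x_1\cdots\dlog x_{q-1}\,\dlog f\in\Omega_{k(C)}^q$, after expanding $\dP(\omega)$ by the Leibniz rule and identifying the surviving term with the local symbol occurring in (WR)---the modulus condition (\ref{eq:modulus}) being exactly what forces the remaining terms to have vanishing residue. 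The map $\psi$ is $k$-linear and surjective, since in characteristic $0$ every generator $x_0\,dx_1\cdots dx_{q-1}$ of $\Omega_k^{q-1}$ can be rewritten as $x_0\,\dlog x_1\cdots\dlog x_{q-1}$ after replacing each $x_i$ by $x_i+c$ for a suitable constant $c$ so that $x_i+c\neq0$.

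Next I would define the candidate inverse $\phi\colon\Omega_k^{q-1}\to K(k;\Ga,\Gm,\ldots,\Gm)$ on generators by
\[
  \phi(x_0\,\dlog x_1\cdots\dlog x_{q-1}):=\s{x_0,x_1,\ldots,x_{q-1}}_{k/k},
\]
and verify that it respects the defining relations of the K\"ahler differentials. Additivity in $x_0$ and the multiplicativity $\dlog(x_iy_i)=\dlog x_i+\dlog y_i$ in each remaining slot are immediate from the biadditivity of the Mackey product and the group law of $\Gm$, and the Leibniz rule then follows formally. The one genuinely nontrivial relation is the additivity of $d$, namely
\[
  \s{ab,b,x_2,\ldots,x_{q-1}}_{k/k}+\s{ab',b',x_2,\ldots,x_{q-1}}_{k/k}=\s{a(b+b'),b+b',x_2,\ldots,x_{q-1}}_{k/k},
\]
together with the alternating vanishing $\s{x_0,x,x,x_3,\ldots}_{k/k}=0$. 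I would produce these identities inside the $K$-group from suitable instances of (WR) on $\P^1_k$, precisely as the Steinberg relation is produced from Weil reciprocity in Somekawa's Theorem 1.4; here the explicit residue computations of Proposition \ref{prop:res}(e),(f),(g) supply the necessary cancellations.

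Granting both maps, the composite $\psi\circ\phi$ is the identity on generators because $\Tr_{k/k}=\Id$. The substantive point is $\phi\circ\psi=\Id$, i.e.\ the statement that every symbol descends to $k$:
\[
  \s{x_0,x_1,\ldots,x_{q-1}}_{E/k}=\phi\bigl(\Tr_{E/k}(x_0\,\dlog x_1\cdots\dlog x_{q-1})\bigr).
\]
I would prove this by induction on $[E:k]$, following the Bass--Tate computation of norms adapted to the present setting. Realizing $E=k(\theta)$ as the residue field $k(P_0)$ of a closed point $P_0$ of $\A^1_k\subset\P^1_k$ with minimal polynomial $p(t)$, I lift $x_0,\ldots,x_{q-1}$ to a regular function $g_0$ and to units $g_1,\ldots,g_{q-1}$ near $P_0$ with the prescribed values, and apply (WR) to $g_0,\ldots,g_{q-1}$ and $f=p(t)$. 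Because $f$ has a simple zero at $P_0$ and is a unit at every other finite point, the relation $\dP(g,f)=v_P(f)\,g(P)$ of (\ref{eq:residue}) makes the local term at $P_0$ reproduce $\s{x_0,\ldots,x_{q-1}}_{E/k}$; the remaining contributions are supported at $\infty$ and at the finite points where the chosen lifts acquire zeros or poles, all of which are $k$-rational or of strictly smaller degree. By the inductive hypothesis these lie in the image of $\phi$, and the residue theorem (Theorem \ref{thm:res}) identifies their total with $\phi(\Tr_{E/k}(\cdots))$, as one sees by applying the already-established map $\psi$.

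The main obstacle is this last step $\phi\circ\psi=\Id$: the reciprocity relation (WR) only lets one trade a symbol at $P_0$ against symbols at the auxiliary closed points introduced by the lifts $g_i$, so the argument requires a careful inductive bookkeeping of those points and a verification that the residue theorem makes their aggregate match $\Tr_{E/k}$ exactly (and not merely up to the kernel of $\psi$). Everything else---the well-definedness of $\psi$, and of $\phi$ modulo the single additive relation---is a direct consequence of the module and residue properties recorded in Theorems \ref{thm:trace}, \ref{thm:res} and Proposition \ref{prop:res}.
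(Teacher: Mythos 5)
Your construction of $\psi$ matches the paper exactly (trace of $x_0\dlog x_1\cdots\dlog x_{q-1}$, with (PF) killed by the properties of R\"ulling's trace and (WR) killed via the residue theorem, the modulus condition (\ref{eq:modulus}) handling the poles of the additive entry), and your $\phi$ is the paper's $\phi$. The genuine gap is in the bijectivity step, and it is the one you yourself flag. You try to prove $\phi\circ\psi=\Id$ by a Bass--Tate induction: lift $x_0,\ldots,x_{q-1}$ to functions on $\P^1_k$ regular at $P_0$ and apply (WR) with $f=p(t)$ the minimal polynomial. But in this additive setting (WR) is not unconditional: it carries the side condition (\ref{eq:modulus}) wherever the additive entry $g_0$ has a pole. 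A nonconstant polynomial lift $g_0$ has a pole at infinity, where $v_\infty(1-f)=-\deg p<0$, so the required inequality $2v_\infty(g_0)+\sum_i m_\infty(g_i)+v_\infty(1-f)\ge 0$ fails for the naive lifts; the relation you want to invoke is simply not available, and repairing it (forcing the $g_i$ to be highly congruent to $1$ at infinity, tracking the extra points this creates) is exactly the unresolved ``bookkeeping'' you admit to. Moreover, closing the argument ``by applying the already-established $\psi$'' only identifies the two sides up to $\Ker\psi$, which is circular since injectivity of $\psi$ is what is being proved.

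The paper's route dissolves this obstacle: it proves $\psi\circ\phi=\Id$ (immediate) and then shows $\phi$ is \emph{surjective}, which already forces both maps to be inverse bijections --- no exact matching against $\Tr_{E/k}$, no reciprocity, no induction. Surjectivity uses a fact special to characteristic $0$ (the case $\Char(k)>0$ being dispatched separately by Lem.~\ref{lem:trivial}, which your write-up never invokes): for $E/k$ finite separable, $\Omega_E^{q-1}\simeq E\otimes_k\Omega_k^{q-1}$, so $x\dlog x_1\cdots\dlog x_{q-1}=\sum_i\xi_i\dlog z_{i,1}\cdots\dlog z_{i,q-1}$ with $\xi_i\in E$ and $z_{i,j}\in k^{\times}$; the projection formula (PF) alone then gives $\s{x,x_1,\ldots,x_{q-1}}_{E/k}=\sum_i\s{\Tr_{E/k}\xi_i,z_{i,1},\ldots,z_{i,q-1}}_{k/k}\in\Im\phi$. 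A secondary gap: your well-definedness of $\phi$ is only sketched, and your chosen presentation (additivity of $d$, Leibniz, alternating) would again require explicit (WR) instances respecting (\ref{eq:modulus}) that you never exhibit; the paper instead uses the Bloch--Esnault/Bloch--Kato presentation of $\Omega_k^{q-1}$ (repeated-entry elements plus the Cathelineau relation), kills the first family by a torsion argument through $K_{q-1}^M(k)$ inside the $k$-vector space $K(k;\Ga,\Gm,\ldots,\Gm)$, and the second by a single explicit (WR) relation on $\P^1_k$ with $g_1=(t-x)(t-(1-x))/(t(t-1))$.
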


As the Milnor $K$-group $K_2^{M}(F)$ of a field $F$
is defined by the quotient of  
$F^{\times} \otimes_{\Z}F^{\times}$ 
modulo the Steinberg relation $x \otimes (1-x)$ 
for $x\in F^{\times}$, 
the space of the absolute K\"ahler differential 
$\Omega_F^1 :=  \Omega_{F/\Z}^1 = \W_{\{1\}}\Omega_F^1$   
has the so called \textit{Cathelineau relation} 
$x\dlog x + (1-x)\dlog (1-x) = 0$. 
Precisely, a surjective homomorphism of $F$-vector spaces
\begin{equation}
\label{eq:str}
  F\otimes_{\Z}\overbrace{F^{\times}\otimes_{\Z}\cdots \otimes_{\Z}F^{\times}}^{q-1} \to \Omega_F^{q-1}
\end{equation}
is defined by 
$x\otimes x_1\otimes \cdots \otimes x_{q-1} \mapsto x\dlog x_1 \cdots  \dlog x_{q-1} 
  = x\dlog x_1 \cdots \dlog x_{q-1}$ 
where $\dlog x := dx/x$ 
and its kernel is generated by elements of the forms (\cite{BE03b}, Lem.\ 4.1, \cite{BK86}, Lem.\ 4.2):
\begin{gather*}
 x\otimes x_1 \otimes \cdots \otimes x_{q-1}, \quad 
    \mathrm{with}\ x_i = x_j\ \mathrm{for\ some}\ i< j,\quad \mathrm{or}\\
 x\otimes x\otimes x_2 \otimes \cdots \otimes x_{q-1} 
    + (1-x)\otimes (1-x) \otimes x_2\otimes \cdots \otimes x_{q-1}.
\end{gather*}

\begin{proof}[Proof of Thm.\ \ref{thm:main}]
    From Lemma \ref{lem:isom}, we may assume $q>1$. 
    Since we are assuming the field $k$ is perfect, 
    the both sides are trivial if $\Char(k)>0$ (Lem.\ \ref{lem:trivial}).  
    Hence we assume $\Char(k)=0$.   

    \sn
    \textit{Definition of $\phi$}:
    Define a map 
    \[\phi:\Omega_k^{q-1} \to K(k;\Ga,\Gm,\ldots ,\Gm)\]  
    by $\phi(x\dlog x_1 \cdots \dlog x_{q-1}) := \s{x,x_1,\ldots ,x_{q-1}}_{k/k}$. 
    To show that the map $\phi$ is well-defined, 
    by (\ref{eq:str}) we need to show
    \begin{equation}
    \label{eq:relation}
    \begin{split}
    \{x,x_1,\ldots ,x_{q-1}\}_{k/k} = 0\quad 
      \mathrm{if}\ x_i = x_j \ \mathrm{for\ some}\ i<j,\quad \mathrm{and}\\
    \{x, x, x_2,\ldots,  x_{q-1}\}_{k/k} + 
    \{(1-x), (1-x), x_2,\ldots, x_{q-1}\}_{k/k} = 0.
    \end{split}
    \end{equation}
    Recall that the structure of a $k$-vector space 
    on the Somekawa $K$-group is defined by 
    $a\{x,x_1,\ldots ,x_{q-1}\}_{E/k} = \{ax, x_1,\ldots ,x_{q-1}\}_{E/k}$ 
    for $a\in k$. 
    For the first equality in (\ref{eq:relation}), 
    we have to show $\{1,x_1,\ldots ,x_{q-1}\}_{k/k} = 0$ if $x_i = x_j$  
    for some $i<j$.  
    The element $\{1,x_1,\ldots ,x_{q-1}\}_{k/k}$ 
    is in the image of the homomorphism 
    \begin{equation}
    \label{eq:dlog}
    \dlog:K(k;\overbrace{\Gm,\ldots ,\Gm}^{q-1}) \to K(k;\Ga,\overbrace{\Gm,\ldots ,\Gm}^{q-1})
    \end{equation}
    defined by $\{y_1,\ldots ,y_{q-1}\}_{E/k} \mapsto \{1,y_1,\ldots ,y_{q-1}\}_{E/k}$. 
    From the isomorphism $K(k;\Gm,\ldots ,\Gm) \isomto K^M_{q-1}(k)$, 
    we have $\{1, x_1,\ldots ,x_{q-1}\}_{k/k} = 0$. 
    For the Cathelineau relation (=the second equality in (\ref{eq:relation})),  
    we may assume $x\neq 1$ 
    and $x_i\neq 1$ for all $i$. 
    Consider the function field $K = k(\mathbb{P}^1_k) = k(t)$. 
    Take 
    \[
      f = t^{-1}, g = t, g_1 = \frac{(t-x)(t-(1-x))}{t(t-1)},\ 
      \mathrm{and}\ g_k = x_{k}\ \mathrm{for}\ 2 \le k \le q-1.
    \]
    For any $P\neq P_{\infty} := $ infinite place in $K$, 
    $g \in \wh{\O}_{\mathbb{P}^1,P}$ and we have
    \begin{align*}
      \{g(P),&\dP(g_{1}, f),g_2(P),\ldots,g_{q-1}(P)\}_{k/k}\\ 
      &= \begin{cases}
        \{x,x, x_2,\ldots ,x_{q-1}\}_{k/k} ,& P = (t-x),\\
        \{1-x,1-x, x_2,\ldots ,x_{q-1}\}_{k/k} ,& P = (t-(1-x)),\\
        0,& \mathrm{otherwise}.    
        \end{cases}
    \end{align*}
    For $P = P_{\infty}$, 
    $v_P(1-f) = 0, v_P(g) = -1$, $m_P(g_1) = v_P(1-g_1) \ge 2$ and $m_P(g_i) = 0$ for all $i\ge 2$. 
    Hence they satisfy the condition (\ref{eq:modulus}) for $m = 1$. 
    We also have
    $\{\dP(g, f), g_1(P), \ldots, g_{q-1}(P)\}_{k/k} = 0$. 
    The relation of type (WR) gives the desired result.

    \sn
    \textit{Definition of $\psi$}: 
    Now we define a homomorphism 
    \[
    \psi: K(k;\Ga,\Gm,\ldots,\Gm) \to \Omega_k^{q-1}
    \] 
    by 
    \[
    \psi(\{x,x_1,\ldots,x_{q-1}\}_{E/k}) :=  
      \Tr_{E/k}(x \dlog x_1  \cdots  \dlog x_{q-1}), 
    \]
    where $\Tr_{E/k}:\Omega_{E}^{q-1} \to \Omega_k^{q-1}$ 
    is the trace map (Thm.\ \ref{thm:trace}). 
    To show that $\psi$ is well-defined, 
    first consider the relations of type (PF). 
    For a finite extension $E_2/E_1$, 
    and 
    if $x_{i_0} \in \Gm(E_2), x\in \Ga(E_1), x_i \in \Gm(E_1)$ ($i\neq i_0)$, 
    by the properties of the trace map 
    (Thm.\ \ref{thm:trace}), 
    we have
    \begin{align*}
    \psi(\{x, x_1,\ldots ,x_{q-1}\}_{E_2/k})
     &= \Tr_{E_2/k}(x\dlog x_1 \cdots \dlog x_{i_0}  \cdots  \dlog x_{q-1}) \\
    &= \Tr_{E_1/k}\circ \Tr_{E_2/E_1}(x\dlog x_1 \cdots  \dlog x_{i_0}  \cdots \dlog x_{q-1}) \\
    &\stackrel{(\ast)}{=} \Tr_{E_1/k}(x \dlog x_1 \cdots \dlog N_{E_2/E_1}x_{i_0} \cdots \dlog x_{q-1})\\
    &= \psi(\{x,x_1,\ldots , N_{E_2/E_1}x_{i_0},\ldots ,x_{q-1}\}_{E_1/k}).
    \end{align*}
    Here, for the third equality $(\ast)$ above, we used the compatibility of 
    the trace map and the norm map (\Cf \cite{Kunz}, Sect.\ 16) which is given by 
    the following commutative diagram: 
    \[
      \xymatrix@C=15mm{
      E_2^{\times} \ar[d]_{N_{E_2/E_1}}\ar[r]^-{\dlog} & \Omega^{1}_{E_2}\ar[d]^{\Tr_{E_2/E_1}}\\
      E_1^{\times} \ar[r]^-{\dlog} & \Omega_{E_1}^{1},
      }
    \]
    Similarly, 
    if $x \in \Ga(E_2)$ and $x_i \in \Gm(E_1)$, 
    we have 
    \begin{align*}
    \psi(\{x,x_1,\ldots ,x_{q-1}\}_{E_2/k}) &= \Tr_{E_2/k}(x\dlog x_1 \cdots \dlog x_{q-1}) \\
    &= \Tr_{E_1/k}\circ \Tr_{E_2/E_1}(x\dlog x_1 \cdots \dlog x_{q-1}) \\
    & = \Tr_{E_1/k}( (\Tr_{E_2/E_1}x)\dlog x_1 \cdots \dlog x_{q-1})\\
    & = \psi(\{\Tr_{E_2/E_1}x, x_1,\ldots ,x_{q-1}\}_{E_1/k}).
    \end{align*}
    Next, for the relation (WR), 
    let $K = k(C)$ be the function field of a curve $C$ over $k$. 
    Take $f\in K^{\times}, g_0 \in\Ga(K), g_i\in \Gm(K)$ ($1 \le i \le q-1)$, 
    as in Definition \ref{def:Somekawa}. 
    For each $P\in C_0$, 
    if $i(P) \neq 0$, 
    then Proposition \ref{prop:res} and Lemma \ref{lem:tameres} imply 
    \begin{align*}
    &\psi(\{g_0(P), g_1(P), \ldots, 
        \dP(g_{i(P)}, f),  \ldots ,g_{q-1}(P)\}_{k(P)/k}) \\
    &\quad = \Tr_{\kP/k}(g_0(P)\dlog 
      \{ g_1(P),  \ldots, \dP(g_{i(P)},f), \ldots, g_{q-1}(P)\})\\
    &\quad = \Tr_{\kP/k}(g_0(P)\dlog \dP(\{g_1, \ldots, g_{q-1},f\}))\\
    &\quad = \Tr_{\kP/k}(g_0(P)\dP(\dlog g_1 \cdots \dlog g_{q-1} \dlog f))\\
    &\quad = \Tr_{\kP/k}\circ \dP(g_0\dlog g_1\cdots \dlog g_{q-1} \dlog f).
    \end{align*}
    Now we consider the case $i(P) = 0$.  
    We show the following equation: 
    \begin{equation}
    \label{eq:res}
      \dP(g_0\dlog(g_1(P)) \cdots \dlog(g_{q-1}(P)) \dlog f) = \dP(g_0\dlog g_1\cdots \dlog g_{q-1}\dlog f). 
    \end{equation}
    If $v_P(g_0)\ge 0$, then for $f = ut^{v_P(f)}$ with $u\in \Ohat_{C,P}^{\times}$  we have 
    \begin{align*}
      \dP(g_0\dlog g_1 \cdots \dlog g_{q-1}\dlog f) 
     &= v_P(f)\dP(g_0\dlog g_1 \cdots \dlog g_{q-1}\dlog t)\\
     &= v_P(f)g_0(P)\dlog (g_1(P)) \cdots \dlog (g_{q-1}(P)) \\
     &= \dP(g_0\dlog (g_1(P)) \cdots \dlog (g_{q-1}(P))\dlog f). 
    \end{align*}
    Next we assume $v_P(g_0)<0$.   
    Put $m_i = m_P(g_i)$ for $1 \le i \le q-1$. 
    When $v_P(1 - f) \le 0$,  
    the condition (\ref{eq:modulus}) implies that there exists 
    $i$ such that $m_P(g_i) = m_i > 0$. 
    This assures that $\dlog (g_i(P)) = 0$ and thus the left hand side of (\ref{eq:res}) is $= 0$. 
    Without loss of generality, 
    we may assume $m_1,\ldots, m_k >0$ and $m_{k+1} = \cdots = m_{q-1} = 0$ for some $k\ge 1$. 
    Putting $g_i = 1+ u_it^{m_i}$ for $1\le i \le k$ with $u_i \in \Ohat_{C,P}^{\times}$, 
    we have 
    \begin{align*}
      &\dlog g_1 \cdots \dlog g_{q-1}\dlog f\\ 
      &\quad  = \frac{t^{m_1-1}(u_1m_1dt + tdu_1)}{g_1} \cdots 
        \frac{t^{m_k-1}(u_km_kdt + tdu_k)}{g_k}\frac{d g_{k+1}}{g_{k+1}} 
        \cdots \frac{d g_{q-1}}{g_{q-1}}\left(\frac{du}{u} + v_P(f)\frac{dt}{t}\right).
    \end{align*}
    Thus $t^{\sum_{i=1}^k m_i - 1}$ divides $\dlog g_1\cdots \dlog g_{q-1}$.\ 
    The condition (\ref{eq:modulus}) gives inequalities  
    \begin{align*}
      v_P(g_0) + \sum_{i=1}^{q-1} m_i - 1 
      & \ge v_P(g_0) + \sum_{i=1}^{q-1} m_i + v_P(1-f) -1\\
      & \ge - v_P(g_0) -1\\
      & \ge 0.
    \end{align*}
    Therefore the right hand side of (\ref{eq:res}) is also equal to $0$. 
    In the case of $m := v_P(1-f) > 0$ also, 
    may assume $m_1,\ldots, m_k >0$ and $m_{k+1} = \cdots = m_{q-1} = 0$. 
    When such $k$ does not exist (that is $m_i = 0$ for all $i$), 
    the condition (\ref{eq:modulus}), that is,   
    $2v_P(g_0) + m \ge 0$ assures 
    $\dP(g_0\dlog f) = 0$ and the equation (\ref{eq:res}) holds. 
    Putting 
    $g_i = 1+ u_it^{m_i}$ for $1\le i \le k$ with $u_i \in \Ohat_{C,P}^{\times}$ 
    and $f = 1 + ut^{m}$ with $u\in \Ohat_{C,P}^{\times}$, we have 
    \begin{align*}
      &\dlog g_1 \cdots \dlog g_{q-1}\dlog f\\ 
      &\quad  = \frac{t^{m_1-1}(u_1m_1dt + tdu_1)}{g_1} \cdots 
        \frac{t^{m_k-1}(u_km_kdt + tdu_k)}{g_k}\frac{d g_{k+1}}{g_{k+1}} \cdots 
        \frac{d g_{q-1}}{g_{q-1}} \frac{t^{m-1}(u m dt + tdu)}{f}.
    \end{align*}
    Thus $t^{\sum_{i=1}^{k}m_i + m - 1}$ divides $\dlog g_1\cdots \dlog g_{q-1}\dlog f$ 
    and it gives inequalities  
    \begin{align*}
    v_P(g_0) + \sum_{i=1}^{q} m_i + m - 1 
      &\ge - v_P(g_0) -1\\ 
      &\ge 0.
    \end{align*}
    Both sides of (\ref{eq:res}) are $0$. 
    Using the equation (\ref{eq:res}), we obtain  
    \begin{align*}
    &\psi(\{\dP(g_0,f), g_1(P), \ldots ,g_{q-1}(P)\}_{k(P)/k}) \\
    &\quad = \Tr_{\kP/k}(\dP(g_0\dlog f) \dlog g_1(P) \cdots \dlog g_{q-1}(P))\\
    &\quad = \Tr_{\kP/k}\circ \dP(g_0 \dlog g_1(P) \cdots \dlog g_{q-1}(P) \dlog f)\\
    &\quad = \Tr_{\kP/k}\circ \dP(g_0 \dlog g_1\cdots \dlog g_{q-1} \dlog f).
    \end{align*}
    From the residue theorem 
    (Thm.\ \ref{thm:res}), 
    we have 
    \[
    \psi\left(\sum_{P\in C_0} \{g_0(P), g_1(P), \ldots, 
        \dP(g_{i(P)}, f),  \ldots ,g_{q-1}(P)\}_{k(P)/k}\right) = 0.
    \]
    Thus $\psi$ is well-defined. 

    \sn
    \textit{Proof of the bijection}: 
    It is easy to see that $\psi \circ \phi = \Id$, where $\Id$ is the identity map on 
    $\Omega_k^{q-1}$. Hence we show that $\phi$ is surjective. 
    Take an element 
    $\{x,x_1,\ldots ,x_{q-1}\}_{E/k}$ in $K(k;\Ga,\Gm,\ldots,\Gm)$. 
    The trace map $\Omega_E^{q-1} \to \Omega_k^{q-1}$ 
    is defined from the trace map 
    $\Tr_{E/k}:E\to k$ by 
    identifying 
    the isomorphism $E\otimes_k \Omega_k^{q-1} \simeq \Omega_E^{q-1}$ 
    (Thm.\ \ref{thm:trace}).  
    Therefore 
    $\Tr_{E/k}(x \dlog x_1 \cdots \dlog x_{q-1})  
    = \sum_i \Tr_{E/k}(\xi_i) \dlog z_{i,1} \cdots \dlog z_{i,q-1}$ 
    for some $\xi_i \in E$ and $z_{i,j} \in k^{\times}$. 
    The property (PF) implies the equality 
    \[
      \{\Tr_{E/k}\xi_i, z_{i,1},\ldots , z_{i,q-1}\}_{k/k}  
        = \{\xi_i, z_{i,1},\ldots ,z_{i,q-1}\}_{E/k}.
    \] 
    Therefore, defining  
    $\phi_E :\Omega_E^{q-1} \to K(E;\Ga,\Gm,\ldots ,\Gm)$ 
    by 
    $\xi \dlog \eta_1 \cdots \dlog \eta_{q-1} \mapsto \{\xi, \eta_1 ,\ldots , \eta_{q-1}\}_{E/E}$ 
    we have 
    \begin{align*}
      &\phi(\Tr_{E/k}(x \dlog x_1 \cdots \dlog x_{q-1}))\\ 
      &\qquad =\sum_i \phi (\Tr_{E/k}\xi_i\dlog z_{i,1} \cdots \dlog z_{i,q-1}) \\
      &\qquad  = \sum_i N_{E/k} \circ \phi_E(\xi_i \dlog z_{i,1} \cdots \dlog z_{i,q-1}) \\
      &\qquad  = N_{E/k} \circ \phi_E(x \dlog x_1 \cdots \dlog x_{q-1}) \\
      &\qquad  = \{x,x_1,\ldots ,x_{q-1}\}_{E/k}.
    \end{align*}
    Thus $\phi$ is surjective and we obtain the assertion. 
\end{proof}

The isomorphism $E\otimes_k \Omega_k^{q-1} \simeq \Omega_E^{q-1}$ 
appeared in the above proof 
is comparable with the following fact on the Milnor $K$-groups 
(\cite{BT73}, Chap.\ I, Cor.\ 5.3; see also the proof of Thm.\ 1.4 in \cite{Som90}): 
For a prime number $p$, we assume that 
every finite extension of a field $k$ is of degree $p^r$ for some $r$. 
Then $K_q^M(E)$ is generated by elements of the form 
$\{x, y_1,\ldots ,y_{q-1}\}$, where $x \in E^{\times}$ and 
$y_i \in \kt$.  

If $\Char(k) = 0$, 
for a finite truncation set $S$, 
we have 
an isomorphism  
\begin{equation}
\label{eq:Gh}
  \Gh = (\Gh_{s})_{s\in S}:\WSOmegak^{q-1} \isomto (\Omega_k^{q-1})^{S}
\end{equation}
given by $\Gh_s(\omega) = F_s(\omega)|_{\{1\}}$ 
from Proposition \ref{prop:dW} (by taking a prime $p\not\in S$).
Reducing to the case of $\Ga$, 
we obtain the following corollary.

\begin{corollary}
    \label{cor:WS}
    Let $k$ be a perfect field with $\Char(k) \neq 2$ 
    and $S$ a finite truncation set.
    Then we have 
    \[
      K(k;\WS,\overbrace{\Gm,\ldots,\Gm}^{q-1}) \isomto \WSOmega_k^{q-1}.
    \]
\end{corollary}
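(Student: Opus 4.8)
The plan is to mirror the proof of Theorem \ref{thm:main}, replacing $\Ga=\W_{\{1\}}$ by $\WS$ and the de Rham complex by the de Rham--Witt complex, and then to deduce bijectivity by splitting everything into ghost components so that each piece is exactly the $\Ga$-case. First I would dispose of the easy reductions. The case $q=1$ is precisely Lemma \ref{lem:isom}~(i), since $\WSOmega_k^{0}=\WS(k)$; so assume $q\ge 2$. If $\Char(k)=p>0$, then the left-hand side vanishes by Lemma \ref{lem:trivial}, while the right-hand side vanishes too: by Proposition \ref{prop:dW}~(i) the group $\WSOmega_k^{q-1}$ is a finite product of $p$-typical groups $W_m\Omega_k^{q-1}$, and these are zero in positive degree because $k$ is perfect (so $\Omega_k^{1}=0$). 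Hence both sides are $0$ and we may assume $\Char(k)=0$.

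For $\Char(k)=0$ I would define the candidate isomorphism by the trace formula
\[
  \psi_S(\s{w,x_1,\ldots,x_{q-1}}_{E/k}) := \Tr_{E/k}\big(w\,\dlog[x_1]\cdots\dlog[x_{q-1}]\big)\in\WSOmega_k^{q-1},
\]
using R\"ulling's trace $\Tr_{E/k}:\WSOmega_E^{q-1}\to\WSOmega_k^{q-1}$ (Theorem \ref{thm:trace}) and $w\in\WS(E)$. Its well-definedness is the verbatim analogue of the computation in Theorem \ref{thm:main}: the relations (PF) are handled by the functoriality of the trace (Theorem \ref{thm:trace}), and the relations (WR) by the residue theorem (Theorem \ref{thm:res}) together with the local-symbol computation of Lemma \ref{lem:tameres} and Proposition \ref{prop:res}~(e),(f),(g). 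The only new point is that the modulus inequality (\ref{eq:modulus}) is now imposed for every ghost coordinate $\gh_s(g_0)$ with weight $m+1=\max S+1$; this is exactly the bound needed to force the $t$-adic order of $g_0\,\dlog g_1\cdots\dlog g_{q-1}\dlog f$ to be non-negative in each $p$-typical component, so that the relevant residues vanish as in Theorem \ref{thm:main}. By construction $\psi_S$ is $\WS(k)$-linear.

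To prove bijectivity I would reduce to the $\Ga$-case by ghost components. The isomorphism $\Gh=(\Gh_s)_{s\in S}$ of (\ref{eq:Gh}) identifies the target with $\prod_{s\in S}\Omega_k^{q-1}$, and on the source side each degree-zero ghost map $\gh_s=F_s|_{\{1\}}:\WS\to\W_{\{1\}}=\Ga$ is a morphism of Mackey functors, being a ring homomorphism natural in the ring and commuting with transfer and restriction (Theorem \ref{thm:trace}~(d), Proposition \ref{prop:res}~(c)). Thus $\gh_s$ induces a map $\Psi_s:K(k;\WS,\Gm,\ldots,\Gm)\to K(k;\Ga,\Gm,\ldots,\Gm)$ which, composed with the isomorphism $\psi$ of Theorem \ref{thm:main}, realizes the $s$-th component of $\Gh\circ\psi_S$. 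The inverse is built as in Theorem \ref{thm:main}, namely $\phi_S(w\,\dlog[x_1]\cdots\dlog[x_{q-1}]):=\s{w,x_1,\ldots,x_{q-1}}_{k/k}$, well-defined by the Cathelineau-type relations applied ghost-componentwise; one checks $\psi_S\circ\phi_S=\Id$ and that $\phi_S$ is surjective exactly as in the $\Ga$-case, using (PF) and the trace. Since $\Gh$ and each $\psi$ are isomorphisms, so is $\psi_S$.

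The step I expect to be the main obstacle is the well-definedness of $\Psi_s$, that is, checking that the reduction to ghost components is compatible with the relations (WR). The subtlety is that the Somekawa quotient depends on the modulus data and not merely on the underlying Mackey functor, so one cannot simply invoke an abstract isomorphism $\WS\cong(\Ga)^S$: one must verify that the single inequality (\ref{eq:modulus}) attached to $\WS$, which couples all ghost coordinates through $m=\max S$, implies in each coordinate the weaker inequality (with weight $2$) defining (WR) for $\Ga$. This implication does hold because $m+1\ge 2$ makes the $\WS$-condition the stronger one; verifying it, together with the commutation of the local symbol with $\gh_s$, is the crux of the reduction.
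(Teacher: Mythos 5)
Your core argument --- splitting into ghost components so that each coordinate reduces to Theorem \ref{thm:main}, the two key checks being that $\gh_s$ commutes with traces and local symbols and that the weight-$(m+1)$ inequality (\ref{eq:modulus}) implies the weight-$2$ inequality of the $\Ga$-case in each coordinate --- is exactly the paper's proof: your $\Psi_s$ is the paper's map $\gamma$, and your treatment of $q=1$ and of positive characteristic also matches the paper's. For the forward direction, then, proposal and paper agree.

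The genuine gap is in your inverse $\phi_S$, and it sits precisely where you declared the argument routine. You assert that $\phi_S(w\,\dlog[x_1]\cdots\dlog[x_{q-1}]):=\s{w,x_1,\ldots,x_{q-1}}_{k/k}$ is well-defined ``by the Cathelineau-type relations applied ghost-componentwise \ldots exactly as in the $\Ga$-case''. Ghost-componentwise, the relations to be killed in $K(k;\WS,\Gm,\ldots,\Gm)$ are
\[
\s{w_x,x,x_2,\ldots,x_{q-1}}_{k/k}+\s{w_{1-x},1-x,x_2,\ldots,x_{q-1}}_{k/k}=0,
\qquad w_a:=\gh^{-1}(0,\ldots,a,\ldots,0)
\]
(entry $a$ in the $s_0$-th ghost coordinate, $0$ elsewhere), and the only tool available to prove them is (WR) for the $\WS$-group, which carries the \emph{stronger} modulus condition of weight $m+1$. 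The datum of Theorem \ref{thm:main} ($C=\P^1_k$, $f=t^{-1}$, $g_1=(t-x)(t-(1-x))/(t(t-1))$, now with $g_0=\gh^{-1}(0,\ldots,t,\ldots,0)$) violates that condition as soon as $m=\max S\ge 2$: since $1-g_1=-x(1-x)/(t(t-1))$ one has $m_{P_\infty}(g_1)=2$, and (\ref{eq:modulus}) at $P_\infty$ reads $-(m+1)+2+0\ge 0$. Nor can this be fixed by tweaking $g_1$: for $g_1=\prod_i(t-a_i)^{n_i}$ the condition at $P_\infty$ forces the power sums $\sum_i n_ia_i^{\,j}$ to vanish for $1\le j\le m$, and for $m\ge 2$ the Cathelineau configuration (coefficient $1$ at $x$ and at $1-x$, plus points such as $a=1$ that contribute trivially) can never satisfy the $j=2$ constraint, since $x^2+(1-x)^2-1=2x(x-1)\neq 0$. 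So the relation you need is not obtained ``exactly as in the $\Ga$-case''; producing it --- or, equivalently, producing \emph{any} well-defined map from the differential side into the $\WS$-group --- requires genuinely new input, of the kind dealt with in \cite{IR}, because weight-$2$ relations need not lie in the subgroup generated by weight-$(m+1)$ relations.

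Note also that you predicted the main obstacle would be the well-definedness of $\Psi_s$; in fact that is the easy direction (strong hypotheses on the source relations, weak demands on the target), and it is the only verification the paper performs: the paper never constructs an inverse out of $\WSOmega_k^{q-1}$, but concludes by combining the well-defined maps $\gamma=(\gamma_s)_{s\in S}$ with the isomorphisms of Theorem \ref{thm:main} and (\ref{eq:Gh}). (The paper is itself terse about why $\gamma$ is bijective, but at least it does not rest on the claim that the Theorem \ref{thm:main} inverse construction transfers verbatim, which is the step that fails in your write-up.)
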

\begin{proof}
    We may assume $q>1$ (Lem.\ \ref{lem:isom}). 
    The left becomes trivial if $\Char(k)> 0$ (Lem.\ \ref{lem:trivial}). 
    It is enough to show $\WSOmega_k^{q-1} =0$ in this case. 
    By Proposition \ref{prop:dW} 
    this reduces to the well-known result 
    $W_m\Omega_k^{q-1}=0$ for some $m$ (\cite{Ill79}, Chap.\ I, Prop.\ 1.6). 
    Now we assume $\Char(k) = 0$. 
    The ghost map (\ref{eq:ghost}) gives an isomorphism $\gh = (\gh_s)_{s\in S}:\WS \isomto \Ga^{\# S}$. 
    We show that the map 
    \[
     \gamma: K(k;\WS,\Gm,\ldots , \Gm) \to K(k;\Ga, \Gm,\ldots , \Gm) 
    \]
    given by 
    $\{w,x_1,\ldots ,x_{q-1}\}_{E/k} \mapsto \{\gh_s(w),x_1,\ldots , x_{q-1}\}_{E/k}$
    is well-defined for $s\in S$. 
    For the relation (PF), 
    since the ghost map $\gh_s:\WS \to \Ga$ 
    commutes with $j_{\ast}$ and $j^{\ast}$ 
    for some finite extension $j:E_1 \inj E_2$ over $k$ (\cite{Rue07}, Appendix A) 
    the map $\gamma$ annihilates the elements of the form (PF). 
    For the relation (WR), 
    let $K = k(C)$ be the function field of 
    a curve $C$ over $k$. 
    For $f\in \Kt, g_0\in \WS(K), g_i\in \Gm(K)$ and $P\in C_0$,  
    if 
    $i(P) \neq 0$ then 
    \[
    \gamma (\{g_0(P), \ldots, \dP(g_{i(P)}, f) , \ldots , g_{q-1}(P)\}_{k(P)/k}) 
    = \{\gh_s(g_0)(P), \ldots, \dP(g_{i(P)}, f) , \ldots , g_{q-1}(P)\}_{k(P)/k}. 
    \]
    Consider the case of $i(P)  = 0$. 
    Immediately, the equality 
    \begin{align*}
      \gamma (\{\dP(g_0, f), g_1(P) , \ldots, g_{q-1}(P)\}_{k(P)/k})  
      &= \{v_P(f)\gh_s(g_0)(P), g_1(P) , \ldots, g_{q-1}(P)\}_{k(P)/k} \\
      &= \{\dP(\gh_s(g_0), f), g_1(P) ,\ldots , g_{q-1}(P)\}_{k(P)/k} 
    \end{align*}
    holds if $g_0 \in \WS(\Ohat_{C,P})$ from (\ref{eq:residue}). 
    If $g_0 \not\in \WS(\Ohat_{C,P})$ and $v_P(\gh_s(g_0)) <0$, then 
    the condition (\ref{eq:modulus}) gives 
    \begin{align*}
      &2v_P(\gh_s(g_0)) + \sum_{i=1}^{q-1}m_P(g_i) + v_P(1-f) \\
      &\quad  \ge  (m+1)v_P(\gh_s(g_0)) + \sum_{i=1}^{q-1}m_P(g_i) + v_P(1-f) \\
      &\quad \ge 0,
    \end{align*}
    where $m := \max\{s \in S\}$. 
    By Proposition \ref{prop:res} (c) and  
    $\Gh_s = F_s(-)|_{\{1\}} = \gh_{\{1\}}\circ F_s$,  
    we obtain 
    \begin{align*}
    \gh_s(\dP(g_0, f)) 
      & = \gh_s\circ \Res_{S}(g_0\dlog[f])\\
      & = \Res_{\{1\}}\circ \Gh_s(g_0\dlog[f]). 
    \end{align*}
    By the very definition of Witt complexes (\cite{Rue07}, Def.\ 1.4), 
    we have $F_s d [f] = [f]^{s-1}d[f]$ and thus 
    \begin{align*}
    \Gh_s(g_0\dlog[f]) 
      &= \Gh_s(g_0)\Gh_s(\dlog [f])\\
      &= \gh_{\{1\}}\circ F_s(g_0) \frac{\gh_{\{1\}}\circ F_s d[f]}{\gh_{s} [f]}\\
      &=  \gh_s(g_0)\frac{[f]^{s-1} d[f]}{[f]^s} \\
      &= \gh_s(g_0)\dlog [f].
    \end{align*}
    Therefore, 
    \begin{align*}
    \gamma (\{\dP(g_0, f), g_1(P) , \ldots, g_{q-1}(P)\}_{k(P)/k})  
      &= \{\gh_s(\dP(g_0, f)), g_1(P) , \ldots, g_{q-1}(P)\}_{k(P)/k} \\
      &= \{\Res_{\{1\}}(\gh_s(g_0)\dlog[f]), g_1(P) ,\ldots , g_{q-1}(P)\}_{k(P)/k} \\
      &= \{\dP(\gh_s(g_0), f), g_1(P) ,\ldots , g_{q-1}(P)\}_{k(P)/k}.
    \end{align*}
    The map $\gamma$ is well-defined and we obtain an isomorphism
    \[
      K(k;\WS,\Gm,\ldots , \Gm) \isomto K(k;\Ga,\Gm,\ldots , \Gm)^S.
    \]
    The assertion now follows from the isomorphism (\ref{eq:Gh}) and Theorem \ref{thm:main}.
\end{proof}

\section{Mixed $K$-groups}
\label{sec:mix}
For an equidimensional variety $X$ over a perfect field $k$, 
let $\CH_0(X)$ be the Chow group of $0$-cycles on $X$. 
Putting $X_E := X\otimes_k E$ for an extension $E/k$,  
the assignments 
$\cCH_0(X): E \mapsto \CH_0(X_E)$ 
give a Mackey functor 
by the pull-back $j^{\ast}:\CH_0(X_{E_2})\to \CH_0(X_{E_1})$ 
and the push-forward $j_{\ast}:\CH_0(X_{E_1}) \to \CH_0(X_{E_2})$ 
for any finite field extension $j:E_1\inj E_2$ over $k$ 
(\Cf \cite{Ful98}, Chap.\ 1). 

\begin{definition} 
    \label{def:mix}
    Let $r,q\ge 0$ be integers, 
    $X_1,\ldots ,X_r$ smooth projective varieties over $k$, 
    and $G_1,\ldots ,G_q$ split semi-abelian varieties over $k$. 
    The \textit{mixed $K$-group} 
    \[
    K(k; \WS, G_1,\ldots ,G_q, \cCH_0(X_1),\ldots ,\cCH_0(X_r))
    \] 
    is defined by the quotient of the Mackey product 
    \[
      \left(\WS \otimesM G_1 \otimesM \cdots 
        \otimesM G_q\otimesM \cCH_0(X_1)\otimesM \cdots \otimesM \cCH_0(X_r)(k)\right)/R,  
    \] 
    where $R$ is the subgroup generated by the following elements: 

    \sn
    (WR) Let $K = k(C)$ be the function field of 
    a projective smooth curve $C$ over $k$. 
    Put $G_0 := \WS$. 
    For $f\in \Kt, x_j \in \CH_0((X_j)_K)$ 
    and $g_i\in G_i(K)$, 
    assume that for each closed point $P$ in $C$ there exists 
    $i(P)$ such that $g_i \in G_i(\Ohat_{C,P})$ for all $i\neq i(P)$. 
    If $g_0 \not \in \WS(\Ohat_{C,P})$ and $q\ge 1$, we further assume, 
    for any $s\in S$ with $v_P(\gh_s(g_0)) < 0$,   
    \begin{align}
    \label{eq:modulus2}
       (m+1)v_P(\gh_s(g_0)) + \sum_{i=1}^{q}m_P(g_i) + v_P(1-f) \ge 0,
    \end{align}
    where $m := \max\{s \in S\}$. 
    Then, the element is 
    \[ 
      \sum_{P \in C_0} g_0(P)\otimes \cdots \otimes \dP(g_{i(P)}, f)\otimes 
        \cdots \otimes g_q(P)\otimes s_P(x_1)\otimes \cdots\otimes s_P(x_r) \in R.
    \]
    Here $s_P:\CH_0((X_j)_K) \to \CH_0((X_j)_{\kP})$ is the 
    specialization map at $P$ (\Cf \cite{Ful98}, Sect.\ 20.3). 
\end{definition}
Note that this group coincides with the $K$-group defined in \cite{Akhtar04a} (and \cite{RS00}) 
in case $G_0 = \WS$ is trivial. 
This mixed $K$-group also has the structure of $\WS(k)$-module. 

\begin{lemma}
    For projective smooth varieties $X$ and $Y$ over $k$, we have 
    \[
        K(k; \WS, \cCH_0(X), \cCH_0(Y)) \isomto K(k; \WS, \cCH_0(X\times Y)).
    \] 
\end{lemma}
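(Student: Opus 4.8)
The plan is to realise the stated isomorphism through the exterior product of zero-cycles, treating the $\WS$-factor and the local symbol as inert and reducing the essential content to the multiplicative situation of Raskind--Spiess \cite{RS00} and Akhtar \cite{Akhtar04a}. For each finite extension $E/k$ the exterior product of cycles gives a $\Z$-bilinear pairing
\[
    \CH_0(X_E) \otimes_{\Z} \CH_0(Y_E) \to \CH_0((X\times Y)_E),\qquad \alpha\otimes\beta \mapsto \alpha\times\beta,
\]
which is compatible with flat base change, with proper push-forward (the projection formula for Chow groups), and with the specialization maps: $s_P(\alpha)\times s_P(\beta)=s_P(\alpha\times\beta)$ (\cite{Ful98}, Sect.\ 20.3). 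First I would use these three compatibilities to define a homomorphism
\[
    \Phi: K(k;\WS,\cCH_0(X),\cCH_0(Y)) \to K(k;\WS,\cCH_0(X\times Y)),\qquad
    \{w,\alpha,\beta\}_{E/k} \mapsto \{w,\alpha\times\beta\}_{E/k}.
\]
Compatibility with base change and push-forward shows that $\Phi$ respects the projection-formula relations (PF), and compatibility of $s_P$ with the exterior product shows that it carries a Weil-reciprocity relation (WR) of the source to one of the target. The point is that, since there are no semi-abelian factors, the modulus condition (\ref{eq:modulus2}) involves only $w$ and $f$ and is \emph{literally identical} on both sides, while the local symbol $\dP(w,f)$ occurs unchanged in each summand; thus the $\WS$-part is merely carried along.

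Next I would construct a candidate inverse. The key reduction is that both $K$-groups are generated by symbols whose Chow-entries are classes of $F$-rational points. Indeed $\CH_0(Z_F)$ is generated by closed points, and a closed point $z$ with residue field $F(z)$ is the push-forward $j_{\ast}[\tilde z]$ of its tautological $F(z)$-rational point $\tilde z$, so by (PF) one has $\{w,[z]\}_{F/k}=\{w,[\tilde z]\}_{F(z)/k}$ with $\tilde z$ rational. Since an $F$-rational point of $X\times Y$ is exactly a pair of $F$-rational points of $X$ and of $Y$, and $[\pi_X(\tilde z)]\times[\pi_Y(\tilde z)]=[\tilde z]$, the map $\Phi$ is visibly surjective. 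To obtain injectivity I would set $\Psi(\{w,[P]\}_{F/k})=\{w,[\pi_X(P)],[\pi_Y(P)]\}_{F/k}$ on these rational-point generators; on them $\Psi$ and $\Phi$ are mutually inverse, so it remains only to check that $\Psi$ descends to the $K$-group.

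The hard part will be the well-definedness of $\Psi$, namely its independence of the cycle representing a class modulo rational equivalence on $X\times Y$. A rational equivalence is $\operatorname{div}(h)$ for a rational function $h$ on an integral curve $W\subset(X\times Y)_E$; passing to the normalization $C$ of $W$ with $K=k(C)=E(W)$, the generic point supplies $K$-rational points $\xi_X\in X(K)$ and $\xi_Y\in Y(K)$ via the two projections. I would then invoke the (WR) relation in $K(k;\WS,\cCH_0(X),\cCH_0(Y))$ with constant coefficient $g_0=w\in\WS(k)$ and $f=h$: since $w\in\WS(\Ohat_{C,P})$ for every $P$ the modulus condition is vacuous, and (\ref{eq:residue}) gives $\dP(w,h)=\operatorname{ord}_P(h)\,w$, so the relation reads
\[
    \sum_{P\in C_0} \operatorname{ord}_P(h)\,\{w,\,s_P(\xi_X),\,s_P(\xi_Y)\}_{\kP/k}=0,
\]
which is precisely $\Psi$ applied to $\{w,\operatorname{div}(h)\}$, because $s_P(\xi_X)=\pi_X(z_P)$ and $s_P(\xi_Y)=\pi_Y(z_P)$ for the specialization $z_P$ of the generic point. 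Thus rational equivalence on the product is \emph{encoded} by the (WR) relation with constant Witt coefficient. Verifying that this matching is term-by-term faithful, and that $\Psi$ also kills the (PF) and general (WR) relations of the target—using once more that specialization commutes with the exterior product—is the technical heart, and it is here that the argument of \cite{RS00}, Sect.\ 4, and \cite{Akhtar04a} is imported essentially verbatim. Granting this, $\Phi$ and $\Psi$ are inverse isomorphisms, proving the claim.
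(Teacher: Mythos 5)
Your overall architecture coincides with the paper's: your $\Phi$ is literally the paper's forward map $\psi$ (the paper writes $(p_X)^{\ast}x\cap(p_Y)^{\ast}y$ for your exterior product $x\times y$), your $\Psi$ is the paper's inverse $\phi$ restricted to rational-point symbols, and both proofs conclude by reducing to symbols whose Chow entries are rational points (your tautological-point reduction via (PF) is fine, and even avoids the paper's Galois-extension argument). Your idea that rational equivalence is encoded by the (WR) relation with constant Witt coefficient is also sound; note that for $q=0$ Definition \ref{def:mix} imposes no modulus condition whatsoever, so that condition is vacuous here rather than merely ``identical on both sides.''

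The genuine gap is your claim that the remaining well-definedness checks for $\Psi$ are ``imported essentially verbatim'' from \cite{RS00} and \cite{Akhtar04a}. In those papers the inverse map is defined on a plain Chow group, so rational equivalence is the only relation to kill; here the source of $\Psi$ is a quotient of the Mackey product $\WS\otimesM\cCH_0(X\times Y)$, so $\Psi$ must also kill the (PF) relations involving the Witt entry, and one of the two cases is not covered by any verbatim import: for $w\in\WS(E_1)$ and a closed point $P$ of $(X\times Y)_{E_2}$ one must prove
\[
\{j^{\ast}w,\,(p_X)_{\ast}[P],\,(p_Y)_{\ast}[P]\}_{E_2/k}
=\{w,\,(p_X)_{\ast}j_{\ast}[P],\,(p_Y)_{\ast}j_{\ast}[P]\}_{E_1/k},
\]
which is \emph{not} an instance of (PF) in the target, since on the left both Chow entries genuinely live over $E_2$ while only $w$ is a pull-back. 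The paper resolves exactly this point with a device missing from your proposal: using the $\WS(E_1)$-module structure it writes the symbols as $w\cdot\{1,-,-\}$, observes that $\{1,-,-\}$ lies in the image of the homomorphism $K(E_1;\cCH_0(X),\cCH_0(Y))\to K(E_1;\WS,\cCH_0(X),\cCH_0(Y))$, $\{x,y\}\mapsto\{1,x,y\}$, and pushes the Raskind--Spiess identity through that map. Without this (or an equivalent) mechanism, ``the $\WS$-part is merely carried along'' fails precisely at this step. Two smaller slips: your constant coefficient should be $g_0=w\in\WS(E)$, not $\WS(k)$, since the rational equivalence lives on $(X\times Y)_E$; and the compatibility $\Psi$ needs is of specialization with proper push-forward (\cite{Ful98}, Prop.\ 20.3 (b)) rather than with the exterior product (that is what $\Phi$ needs), while your term-by-term matching must also absorb the degree factors $[k(P):E(\nu(P))]$ arising when the normalization $\nu$ of the curve enlarges residue fields, via (PF) and the fact that trace after restriction on $\WS$ is multiplication by the degree.
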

\begin{proof}
    \textit{Definition of $\psi$}: 
    By \cite{RS00}, Theorem 2.2, 
    there is a canonical isomorphism 
    \[
      \psi^{RS}_k: K(k; \cCH_0(X), \cCH_0(Y)) \isomto  K(k; \cCH_0(X\times Y)) 
    \isomto \CH_0(X\times Y)
    \]
    defined by $\psi^{RS}_k(\{x,y\}_{E/k}) 
      := N_{E/k}((p_X)^{\ast}x \cap (p_Y)^{\ast}y)$, 
    where  $\cap$  is the intersection product (\Cf \cite{Ful98}, Chap.\ 8), 
    $p_X:X\times Y \to X,p_Y:X\times Y \to Y$ are the projection maps, 
    and 
    $N_{E/k} = j_{\ast}:\CH_0((X\times Y)_E) \to \CH_0(X\times Y)$ is the push-forward 
    along $j:(X\times Y)_E \to X\times Y$ (\Cf \cite{Ful98}, Sect.\ 1.4). 
    We denote by $\psi$ the composition 
    \[
      \WS \otimesM \cCH_0(X) \otimesM \cCH_0(Y)(k) \surj  
      \WS\otimesM \cCH_0(X\times Y) (k) \surj K(k;\WS, \cCH_0(X\times Y)). 
    \] 
    Precisely, the map $\psi$ is given by 
    \begin{align*}
    \psi(\{w, x,y\}_{E/k}) & = \psi(\{w, \{x,y\}_{E/E}\}_{E/k})\\
     &= \{w, \psi^{RS}_{E}(\{x,y\}_{E/E})\}_{E/k}\\
     &= \{w, (p_X)^{\ast}x \cap (p_Y)^{\ast}y\}_{E/k}.
    \end{align*}
    Now we show that the map $\psi$ annihilates the elements of the form (WR) in 
    $K(k; \WS, \cCH_0(X), \cCH_0(Y))$. 
    Let $K = k(C)$ be the function field of a projective smooth curve $C$ 
    over $k$, 
    $f\in K^{\times}$, $w \in \WS(K)$, $x\in \CH_0(X_K)$ and $y\in \CH_0(Y_K)$. 
    Since the specialization map is compatible with pull-backs 
    (\cite{Ful98}, Prop.\ 20.3 (a)), 
    for any closed point $P\in C_0$, we have
    \begin{align*}
      &\psi(\{\dP(w,f), s_P(x), s_P(y)\}_{\kP/k}) \\
      &\quad = \{\dP(w,f),(p_X)^{\ast}s_P(x) \cap (p_Y)^{\ast}s_P(y)\}_{\kP/k}\\
      &\quad = \{ \dP(w,f), s_P ((p_X)^{\ast}x \cap (p_Y)^{\ast}(y))\}_{\kP/k}.
    \end{align*}
    Thus (WR) in $K(k;\WS, \cCH_0(X \times Y))$ implies the assertion. 

    \sn
    \textit{Definition of $\phi$}: 
    Now we define 
    \[
      \phi:K(k;\WS, \cCH_0(X\times Y)) \longrightarrow K(k;\WS, \cCH_0(X), \cCH_0(Y)).
    \]
    For any finite field extension $E/k$, 
    the Chow group $\CH_0((X\times Y)_E)$ is generated by 
    closed points $P$ in $(X\times Y)_E$. 
    The map $\phi$ is defined by, for a closed point $P$ in $(X\times Y)_E$,  
    \[
      \phi(\{w, [P]\}_{E/k}) := \{w, (p_X)_{\ast}[P], (p_Y)_{\ast}[P]\}_{E/k}.
    \] 
    Note that the inverse map $\phi^{RS}_k$ of $\psi^{RS}_k$ 
    is given  by $\phi^{RS}_k([P]) = \{(p_X)_{\ast}[P], (p_Y)_{\ast}[P]\}_{E/k}$. 
    First we show that this map $\phi$ is well-defined.  
    Let  $j:E_1 \inj E_2$ be a finite extensions over $k$. 
    For $w \in \WS(E_2)$ and a closed point $P$ in $(X\times Y)_{E_1}$, 
    we have
    \begin{align*}
      \phi(\{w, j^{\ast}[P]\}_{E_2/k})  
      &= \{w, (p_X)_{\ast}j^{\ast}[P], (p_Y)_{\ast}j^{\ast}[P]\}_{E_2/k}\\
       &= \{w, j^{\ast}(p_X)_{\ast}[P], j^{\ast}(p_Y)_{\ast}[P]\}_{E_2/k}\\
      & \stackrel{(\ast)}{=} \{\Tr_{E_2/E_1}w, (p_X)_{\ast}[P], (p_Y)_{\ast}[P]\}_{E_1/k}\\
      &= \phi(\{\Tr_{E_2/E_1}w, [P]\}_{E_1/k}).
    \end{align*} 
    For the equality $(\ast)$, here we used the projection formula (PF) 
    in $K(k; \WS, \cCH_0(X), \cCH_0(Y))$. 
    For $w \in \WS(E_1)$ and a closed point $P$ in $(X\times Y)_{E_2}$, 
    we show $\phi(\{j^{\ast}(w), [P]\}_{E_2/k}) = \phi(\{w, j_{\ast}[P]\}_{E_1/k})$. 
    From the commutative diagram
    \[
      \xymatrix{
        \CH_0((X\times Y)_{E_2}) \ar[d]_{j_{\ast} 
          = N_{E_2/E_1}} \ar[r]^-{\phi^{RS}_{E_2}} & K(E_2;\cCH_0(X), \cCH_0(Y)) \ar[d]^{N_{E_2/E_1}} \\
        \CH_0((X\times Y)_{E_1}) \ar[r]^-{\phi^{RS}_{E_1}} & K(E_1;\cCH_0(X), \cCH_0(Y)) 
       }
    \]
    we have 
    \begin{align*}
    \{(p_X)_{\ast}[P], (p_Y)_{\ast}[P]\}_{E_2/E_1} 
    &= N_{E_2/E_1}(\{(p_X)_{\ast}[P], (p_Y)_{\ast}[P]\}_{E_2/E_2}) \\
    &= N_{E_2/E_1}\circ \phi_{E_2}^{RS}([P]) \\
    &= \phi_{E_1}^{RS}\circ N_{E_2/E_1} ([P]) \\
    &= \{(p_X)_{\ast}j_{\ast}[P], (p_Y)_{\ast}j_{\ast}[P]\}_{E_1/E_1}. 
    \end{align*}
    Recall that the $\WS(k)$-module structure on $K(k;\WS, \cCH_0(X), \cCH_0(Y))$ 
    is given by $v\{w,x, y\}_{E/k} = \{j^{\ast}(v)w, x,y\}_{E/k}$ 
    for $v\in \WS(k)$, and a symbol $\{w, x, y\}_{E/k}$, 
    where $j:k\inj E$. Hence 
    \begin{align*}
    \phi(\{j^{\ast}w, [P]\}_{E_2/k}) 
     &= \{j^{\ast}w, (p_X)_{\ast}[P], (p_Y)_{\ast}[P]\}_{E_2/k}\\
     &= N_{E_1/k}(\{j^{\ast}w, (p_X)_{\ast}[P], (p_Y)_{\ast}[P]\}_{E_2/E_1})\\
     &= N_{E_1/k}(w\{1, (p_X)_{\ast}[P], (p_Y)_{\ast}[P]\}_{E_2/E_1})\\
     &\stackrel{(\ast)}{=} N_{E_1/k}(w\{1, (p_X)_{\ast}j_{\ast}[P], (p_Y)_{\ast}j_{\ast}[P]\}_{E_1/E_1})\\
     &= \{w, j_{\ast}(p_X)_{\ast}[P], j_{\ast}(p_Y)_{\ast}[P]\}_{E_1/k}\\
     &= \{w, (p_X)_{\ast}j_{\ast}[P], (p_Y)_{\ast}j_{\ast}[P]\}_{E_1/k}\\
     &= \phi(\{w, j_{\ast}[P]\}_{E_1/k}).
    \end{align*}
    Here, the equality ($\ast$) holds since the both of the symbols 
    $\{1, (p_X)_{\ast}[P], (p_Y)_{\ast}[P]\}_{E_2/E_1}$ and 
    $\{1, (p_X)_{\ast}j_{\ast}[P], (p_Y)_{\ast}j_{\ast}[P]\}_{E_1/E_1}$ 
    are in the image of the following map like $\dlog$ in (\ref{eq:dlog})
    \[
      K(E_1; \cCH_0(X), \cCH_0(Y)) \to K(E_1; \WS, \cCH_0(X), \cCH_0(Y))
    \]
    which is defined by $\{x,y\}_{E_1'/E_1} \mapsto \{1,x,y\}_{E_1'/E_1}$. 
    Next, we show that the map $\phi$ takes the elements of the form (WR) 
    to zero. 
    Let $K = k(C)$ be the function field of a projective smooth curve $C$ 
    over $k$, 
    $f\in K^{\times}$, $w \in \WS(K)$ and $\xi$ a closed point 
    in $(X\times Y)_K$. 
    Since the specialization map is compatible with push-forward 
    (\cite{Ful98}, Prop.\ 20.3 (b)), 
    for any closed point $P\in C_0$, we have
    \begin{align*}
      \phi(\{\dP(w,f), s_P[\xi]\}_{\kP/k}) 
      & = \{\dP(w,f), (p_X)_{\ast}s_P[\xi], (p_Y)_{\ast}s_P[\xi]\}_{\kP/k}\\
      &= \{\dP(w,f), s_P\circ (p_X)_{\ast}[\xi], s_P\circ (p_Y)_{\ast}[\xi] \}_{\kP/k}.
    \end{align*}
    Thus the assertion follows from 
    (WR) in $K(k;\WS, \cCH_0(X), \cCH_0(Y))$. 

    \sn
    \textit{Proof of the bijection}: 
    For any symbol $\{w, [P_X], [P_Y]\}_{E/k}$ in $K(k;\WS,\cCH_0(X), \cCH_0(Y))$ 
    which is given by closed points $P_X$ in $X_E$ and $P_Y$ in $Y_E$,  
    there exists a finite field extension $L/E$  such that 
    $L$ is a Galois extension over $E(P_X)$ and $E(P_Y)$. 
    Then $(P_X\times_E P_Y)_L$ decomposes of $L$-rational points. 
    The projection formula (PF) implies 
    \begin{align*}
      \{w, [P_X], [P_Y]\}_{E/k} &= \{\Tr_{L/E}\wt{w}, [P_X], [P_Y]\}_{E/k} \\
     &= \{ \wt{w}, j^{\ast}[P_X], j^{\ast}[P_Y]\}_{L/k} 
    \end{align*}
    for some $\wt{w} \in \WS(L)$ and $j:E\inj L$. 
    Hence $K(k;\WS, \cCH_0(X), \cCH_0(Y))$ is generated by 
    symbols of the form $\{w, P_X, P_Y\}_{E/k}$ for a finite field extension $E/k$, 
    where $P_X$ and $P_Y$ are $E$-rational points. 
    The same holds in $K(k;\cCH_0(X\times Y))$. 
    Now it is easy to see that $\phi$ is surjective and 
    $\psi\circ \phi(\{w, [P]\}_{E/k}) = \{w, [P]\}_{E/k}$ 
    for an $E$-rational point $P$ in $(X\times Y)_E$. 
    The bijectivity follows from it. 
\end{proof}

\begin{lemma}
    \label{lem:mix}
    $\mathrm{(i)}$ If $X = \Spec k$, then 
    \[
    K(k; \WS, \cCH_0(\Spec k)) \isomto K(k; \WS) \simeq \WS(k). 
    \]

    \sn 
    $\mathrm{(ii)}$ If $\Char(k) = p>0$ and 
    $X$ is a projective smooth and geometrically connected curve over $k$ 
    with $X(k) \neq \emptyset$, 
    then 
    \[
      K(k; \WS, \cCH_0(X)) \isomto K(k; \WS) \simeq \WS(k).
    \]
\end{lemma}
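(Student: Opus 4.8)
The plan is to reduce both parts to the isomorphism $K(k;\WS)\isomto\WS(k)$ of Lemma~\ref{lem:isom}~(i), by unwinding the Chow Mackey functor $\cCH_0(X)$ in each case. For (i) the first observation is that $\cCH_0(\Spec k)$ is the unit Mackey functor: for each finite $E/k$ one has $\CH_0(\Spec E)=\Z$, the restriction maps being the identity and the norm maps multiplication by the degree. As $\Z$ is the unit for the Mackey product, $\WS\otimesM\cCH_0(\Spec k)\simeq\WS$, and it then remains only to match the relation (WR). Since the specialization map $s_P$ on $\cCH_0(\Spec k)=\Z$ is the identity, for the canonical generator of $\CH_0(\Spec K)$ the mixed relation $\sum_P\dP(g_0,f)\otimes s_P(1)$ corresponds under this identification exactly to $\sum_P\dP(g_0,f)$, the relation defining $K(k;\WS)$. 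This gives $K(k;\WS,\cCH_0(\Spec k))\simeq K(k;\WS)\isomto\WS(k)$.

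\noindent\textit{Part (ii).} First I would use the $k$-rational point $x_0$ to split the Chow Mackey functor. For every finite extension $E/k$ the base change $X_E$ is again smooth, projective and geometrically connected with a rational point, so the degree map fits into a split short exact sequence $0\to\operatorname{Pic}^0(X_E)\to\CH_0(X_E)\xrightarrow{\deg}\Z\to0$ with section $1\mapsto[x_0]$. Identifying $\operatorname{Pic}^0(X_E)=J_X(E)$, this produces a decomposition $\cCH_0(X)\simeq\Z\oplus J_X$ of Mackey functors, where $J_X$ denotes the Jacobian viewed as the Mackey functor $E\mapsto J_X(E)$. One checks that the degree map and the section are compatible with both restriction and norm, using $j_{\ast}[Q]=[k(Q):k(P)]\,[P]$ for the push-forward along a finite extension.

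\noindent Since the Mackey product is additive, $\WS\otimesM\cCH_0(X)\simeq(\WS\otimesM\Z)\oplus(\WS\otimesM J_X)$. Now $J_X$, being an abelian variety, is a semi-abelian variety, so Lemma~\ref{lem:trivial} yields $\WS\otimesM J_X=0$ as a Mackey functor. It is worth stressing that this vanishing rests only on the projection formula and is therefore insensitive to the relations imposed afterwards; in particular it is harmless that $J_X$ enters here through the specialization maps of $\cCH_0(X)$ rather than through a local symbol. Combined with $\WS\otimesM\Z\simeq\WS$ we obtain $\WS\otimesM\cCH_0(X)\simeq\WS$, and passing to the mixed $K$-group the relations (WR) supported on the $J_X$-summand vanish automatically, while those on the $\Z$-summand are exactly the relations of $K(k;\WS,\cCH_0(\Spec k))$. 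Hence $K(k;\WS,\cCH_0(X))\simeq K(k;\WS,\cCH_0(\Spec k))\isomto\WS(k)$ by part~(i).

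\noindent The hard part will be the Mackey-functor decomposition $\cCH_0(X)\simeq\Z\oplus J_X$: checking that the degree map and the rational-point section respect both the contravariant and the covariant structure, and that $\operatorname{Pic}^0(X_E)=J_X(E)$ holds uniformly in $E$. Everything else is the unit property of $\Z$, the vanishing of Lemma~\ref{lem:trivial}, and the formal reduction to part (i).
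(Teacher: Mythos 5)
Your proposal is correct and follows essentially the same route as the paper: the paper also reduces (i) to $\CH_0(\Spec E)\simeq\Z$, and for (ii) it uses the $k$-rational point to split off the degree-zero part $\cA_0(X)$ of $\cCH_0(X)$ (your $\Z\oplus J_X$ decomposition is exactly this splitting), identifies $\cA_0(X)\simeq J_X$ as Mackey functors, and kills $\WS\otimesM J_X$ by Lemma \ref{lem:trivial}. Your remark that the vanishing happens already at the level of the Mackey product, before any (WR) relations are imposed, is precisely the point the paper exploits as well.
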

\begin{proof}
    The assertion (i) follows from 
    $\CH_0(\Spec E) \simeq \Z$ for an 
    extension $E/k$. 
    For the assertion (ii), we consider 
    the kernel $\cA_0(X)$ of the map 
    $\deg:\cCH_0(X) \to \Z$ of Mackey functors given by the degree map. 
    Note that the specialization map induces a map on $\cA_0(X)$. 
    By replacing $\cCH_0$ with $\cA_0$ in the appropriate instances, 
    we can define the mixed $K$-group $K(k; \WS, \cA_0(X))$ 
    as a quotient of $\WS\otimesM \cA_0(X)(k)$. 
    Recall that 
    $\Z$ is the unit of the Mackey product. 
    The assumption $X(k)\neq \emptyset$ gives 
    a split short exact sequence 
    \[
      0 \to K(k;\WS,\cA_0(X)) \to K(k;\WS,\cCH_0(X)) \to K(k;\WS) \to 0.
    \]
    Because of $K(k;\WS) \simeq \WS(k)$ (Lem.\ \ref{lem:isom}), 
    it is enough to show $\WS\otimesM \cA_0(X)(k) = 0$. 
    By the assumption $X(k)\neq \emptyset$, 
    we have 
    $\cA_0(X) \simeq J_{X}$ as Mackey functors, 
    where $J_{X}$ is the Jacobian variety of $X$. 
    Therefore we obtain 
    $\WS\otimesM \cA_0(X)(k) \simeq \WS\otimesM J_{X}(k)$, 
    and the latter becomes trivial by Lemma \ref{lem:trivial}.
\end{proof}

Let $X$ be a projective smooth variety over $k$ 
and $X_i$ the subset of $X$ consisting of points $x$ 
such that the closure of $\ol{\{x\}}$ is of dimension $i$.
On Milnor $K$-groups, there is a canonical homomorphism 
\[
  \d : \bigoplus_{\xi \in X_1}K_{2}^M(k(\xi)) \to \bigoplus_{P\in X_0}k(P)^{\times}.
\]
The map  $\d$ is defined by the boundary map of Milnor $K$-theory as follows. 
For each $\xi\in X_1$ and $P\in X_0$, let 
$\pi:Y^{N} \to Y \subset X$ be the normalization of 
the reduced scheme $Y := \ol{\{\xi\}}$ and define $\d_P^{\xi}$ by the composition
\[
  \d_P^{\xi}:K_{2}^M(k(\xi)) \xrightarrow{\oplus \d_{P^N}} 
    \bigoplus_{P^N\in \pi^{-1}(P)} k(P^{N})^{\times} 
    \xrightarrow{\sum_{P^N}{N_{k(P^N)/k(P)}}}\kP^{\times},
\]
where $\d_{P^N}$ is the tame symbol and $P^N$ runs over all closed points of $Y^N$ lying over $P$. 
The map $\d$ is defined by taking the sum of these homomorphisms $\d^{\xi}_P$. 
Using this, the mixed $K$-group associated to $\cCH_0(X)$ and $\Gm$'s 
has the following description. 
(In fact, Akhtar \cite{Akhtar04a} gives a description of 
$K(k; \overbrace{\Gm,\ldots , \Gm}^q, \cCH_0(X))$ more generally. 
But here we only refer the case of $q=1$.)

\begin{theorem}[\cite{Akhtar04a}, Thm.\ 5.2, 6.1]
    \label{thm:SK1}
    Let $X$ be a projective smooth variety over $k$. 
    There is a canonical isomorphism 
    \[
      K(k; \Gm, \cCH_0(X)) \isomto 
    \Coker\left(\d:\bigoplus_{\xi\in X_1} K_{2}^M(k(\xi)) \to \bigoplus_{P\in X_0}\kP^{\times}\right).
    \]
\end{theorem}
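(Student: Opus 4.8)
The plan is to exhibit mutually inverse homomorphisms between the two sides, the real content being that the reciprocity relation (WR) of Definition \ref{def:mix} matches the Gersten-type relations defining $\Coker(\d)$. Write $\rho_E\colon X_E\to X$ for the projection. I would define
\[
  \Psi\colon \bigoplus_{P\in X_0}\kP^{\times}\longrightarrow K(k;\Gm,\cCH_0(X)),\qquad g\mapsto \{g,[P]\}_{\kP/k},
\]
where $[P]$ is the canonical $\kP$-rational point of $X_{\kP}$ determined by $P\in X_0$; and a candidate inverse
\[
  \Phi\colon K(k;\Gm,\cCH_0(X))\longrightarrow \Coker(\d)
\]
sending $\{f,z\}_{E/k}$, with $z=\sum_i n_i[P_i]$ a sum of closed points $P_i\in X_E$ lying over $\ol{P}_i:=\rho_E(P_i)\in X_0$, to the class of $\sum_i n_i N_{k(P_i)/k(\ol{P}_i)}(f)$ in $\Coker(\d)$, where $f\in E^{\times}$ is restricted along $E\inj k(P_i)$.

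The crux is that $\Psi$ factors through $\Coker(\d)$, i.e.\ $\Psi\circ\d=0$. Fix $\xi\in X_1$, let $\pi\colon Y^{N}\to\ol{\{\xi\}}\inj X$ be the normalization of $\ol{\{\xi\}}$, so $Y^{N}$ is a smooth projective curve over $k$ with $k(Y^{N})=k(\xi)=:K$, and take $a,b\in K^{\times}$. The generic point of $Y^{N}$ gives, through $\pi$, a $K$-rational point of $X_K$, hence a zero-cycle $\eta\in\CH_0(X_K)$. Applying (WR) of Definition \ref{def:mix} to $C=Y^{N}$ with $g_1=a$, $f=b$ and $x=\eta$ yields
\[
  \sum_{P^{N}\in Y^{N}_0}\{\d_{P^{N}}(a,b),\,s_{P^{N}}(\eta)\}_{k(P^{N})/k}=0,
\]
where $\d_{P^{N}}(a,b)$ is the tame symbol. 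Since $\pi$ is a morphism defined on all of $Y^{N}$, the specialization $s_{P^{N}}(\eta)$ is the canonical $k(P^{N})$-rational point lying over $\ol{P}:=\pi(P^{N})\in X_0$; applying (PF) turns each term into $\{N_{k(P^{N})/k(\ol{P})}(\d_{P^{N}}(a,b)),[\ol{P}]\}_{k(\ol{P})/k}$. Grouping the $P^{N}$ by their image $\ol{P}$ and using $\d_{\ol{P}}^{\xi}=\sum_{P^{N}\mid\ol{P}}N_{k(P^{N})/k(\ol{P})}\circ\d_{P^{N}}$, the displayed sum is exactly $\Psi(\d(\{a,b\}))$; thus $\Psi$ descends to $\ol{\Psi}\colon\Coker(\d)\to K(k;\Gm,\cCH_0(X))$.

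The remaining compatibilities are formal. Using (PF) on the $\cCH_0$-slot, an arbitrary symbol $\{f,[P]\}_{E/k}$ with $P$ over $\ol{P}\in X_0$ is rewritten as $\{N_{k(P)/k(\ol{P})}(f),[\ol{P}]\}_{k(\ol{P})/k}$, a symbol attached to a canonical rational point; hence $\ol{\Psi}$ is surjective. On such generators $\Phi(\{g,[\ol{P}]\}_{k(\ol{P})/k})=[g]$ because the canonical point is rational and the relevant norm is trivial, so $\Phi\circ\ol{\Psi}=\Id$ and $\ol{\Psi}\circ\Phi=\Id$, giving the asserted isomorphism once $\Phi$ is known to be a well-defined homomorphism. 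That $\Phi$ respects (PF) is immediate from transitivity of norms and functoriality of pushforward on $\cCH_0$.

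The main obstacle I anticipate is precisely the well-definedness of $\Phi$ on an \emph{arbitrary} relation of type (WR). Such a relation is indexed by a curve $C/k$, elements $g,f\in k(C)^{\times}$ and a zero-cycle $x$ over the function field $k(C)$, whose supporting points need not arise from the distinguished curves $\ol{\{\xi\}}$, $\xi\in X_1$; one must show that the norm-transferred tame symbols $\Phi$ produces land in $\Im(\d)$. This is exactly where a moving-lemma/Bertini reduction to rational points, in the style of Raskind–Spiess and Akhtar, is needed, and where I would invoke the cited results of Akhtar \cite{Akhtar04a} rather than re-derive them; all other verifications reduce to (PF) and the functoriality of specialization and norm recorded above.
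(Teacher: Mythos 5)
You should first note what the paper actually does with this statement: it does not prove it. Theorem \ref{thm:SK1} is imported verbatim from Akhtar (\cite{Akhtar04a}, Thm.\ 5.2, 6.1), after the remark following Definition \ref{def:mix} that the mixed $K$-group coincides with Akhtar's when the Witt factor is absent. Your $\Psi$, the computation $\Psi\circ\d=0$ (via (WR) applied on the normalization $Y^{N}$ of $\ol{\{\xi\}}$ together with (PF)), and the surjectivity of $\ol{\Psi}$ are all correct, and they are exactly the multiplicative counterparts of the corresponding steps in the paper's proof of the additive analogue, Theorem \ref{thm:SK1+}. The problem is the step you defer: the well-definedness of $\Phi$ on relations of type (WR) --- equivalently the injectivity of $\ol{\Psi}$ --- is the entire content of the theorem, and the reference you propose to invoke for it is Akhtar's Theorem 5.2/6.1, i.e.\ the very statement being proved. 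As a proof this is circular: it reduces Akhtar's theorem to Akhtar's theorem. Citing the result wholesale (as the paper does) is coherent; proving the easy half and citing the whole theorem for the hard half is not a proof.

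Moreover, your diagnosis that a ``moving-lemma/Bertini reduction'' is what is missing is inaccurate, and this matters because the genuine argument is elementary and is written out in the paper in the additive case (well-definedness of $\psi$ under (WR) in the proof of Theorem \ref{thm:SK1+}). Given a (WR) relation indexed by a curve $C$ with function field $K$, elements $g,f\in K^{\times}$ and a closed point $\xi\in (X_K)_0$: if $\xi$ is $K$-rational, the valuative criterion of properness extends it to a morphism $\pi:C\to X$; when $\pi$ is constant with image $P\in X_0$, the curve $C$ is defined over $\kP$ and the image of the relation is $\sum_{Q}N_{k(Q)/\kP}\,\d_Q\{g,f\}=0$ by Weil reciprocity for Milnor $K$-theory; when $\pi$ dominates a curve with generic point $\eta\in X_1$, the compatibility $\sum_{Q\mapsto P}N_{k(Q)/\kP}\circ \d_Q=\dP\circ N_{K/k(\eta)}$ (the $K^M$-analogue of Lemma \ref{lem:trres}) exhibits the image as $\d\bigl(N_{K/k(\eta)}\{g,f\}\bigr)\in \Im(\d)$. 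If $\xi$ is not rational, one passes to the smooth proper model $C'$ of $K'=K(\xi)$ and uses the compatibility of specialization with push-forward (with ramification indices) together with $\d_{Q'}\circ \Res_{K'/K}=e(Q'/Q)\,\d_Q$ (the $K^M$-analogue of Lemma \ref{lem:rescan}) and (PF) to reduce to the rational case. All of these multiplicative ingredients are available in \cite{BT73}; writing them out along the template of Theorem \ref{thm:SK1+} would close your gap with no moving lemma and no circular citation.
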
 

\noindent
Note that the cokernel on the right is often denoted by $SK_1(X)$. 
By the Weil reciprocity law on the Milnor $K$-groups (\Cf \cite{BT73}, Sect.\ 5),  
one can consider a complex 
\[
  \bigoplus_{\xi \in X_1} K_2^M(k(\xi)) \onlong{\d} \bigoplus_{P\in X_0} \kP^{\times} \onlong{N} k^{\times}
\]
where $N$ is defined by 
the norm maps $N := \sum_P N_{\kP/k}$.  
We denote by $V(X) := \Ker(N)/\Im(\d)$ 
the homology group of the above complex.
In particular, if we further assume that 
$X$ is a geometrically connected curve over $k$ with a $k$-rational point, 
then the decomposition $A_0(X) \oplus \Z \simeq \CH_0(X)$ gives 
an isomorphism 
\[
K(k, \Gm, J_X) \isomto V(X),
\] 
where $J_X$ is the Jacobian variety of $X$. 

For a projective smooth variety $X$ over $k$ 
and a finite truncation set $S$,  
we define a canonical homomorphism 
\begin{equation}
\label{eq:dP+}
  \d = \d_S: \bigoplus_{\xi \in X_1}\WS\Omega^{1}_{k(\xi)} \to \bigoplus_{P\in X_0}\WS({k(P)}).
\end{equation}
For each $\xi\in X_1$ and $P\in X_0$, let 
$\pi:Y^N \to Y  \subset X$ be the normalization of $Y :=\ol{\{\xi\}}$
and define $\d_P^{\xi}$ by the composition
\[
  \d_P^{\xi}:\WS\Omega^{1}_{k(\xi)} 
    \xrightarrow{\oplus\d_{P^N}} \bigoplus_{P^N\in \pi^{-1}(P)} \WS({k(P^N)}) 
    \xrightarrow{\sum_{P^N}{\Tr_{k(P^{N})/k(P)}}}\WS({\kP}),
\]
where $\d_{P^N}$ is the residue map (\ref{def:res}). 
The map $\d$ is defined by taking the sum of these homomorphisms $\d^{\xi}_P$.

\begin{theorem}
    \label{thm:SK1+}
    Let $X$ be a projective smooth variety over $k$.  
    Then, there is a canonical isomorphism
    \[
    \phi :\Coker \left(\d:\bigoplus_{\xi \in X_1}\Omega^{1}_{k(\xi)} \to \bigoplus_{P\in X_0}k(P) \right) \isomto K(k; \Ga, \cCH_0(X)). 
    \]
\end{theorem}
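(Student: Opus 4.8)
The plan is to follow the proof of the multiplicative analogue, Theorem \ref{thm:SK1}, replacing the tame symbol by the refined residue map $\dP$ and the isomorphism $K(k;\Gm,\Gm)\isomto K_2^M(k)$ by Theorem \ref{thm:main} in the case $q=2$, namely $K(k;\Ga,\Gm)\isomto\Omega_k^1$; the role played by $K(k;\Gm)\isomto\kt$ is now played by $K(k;\Ga)\isomto k$ (Lemma \ref{lem:isom}). A convenient simplification over the general mixed $K$-group is that here the semi-abelian part is empty ($q=0$ in Definition \ref{def:mix}), so the modulus condition (\ref{eq:modulus2}) is vacuous: for $K(k;\Ga,\cCH_0(X))$ the relation (WR) attached to a proper smooth curve $C$ over $k$ reads simply
\[
  \sum_{P\in C_0}\{\dP(g_0,f),\, s_P(x)\}_{\kP/k}=0
\]
for arbitrary $g_0\in\Ga(k(C))=k(C)$, $f\in k(C)^{\times}$ and $x\in\CH_0(X_{k(C)})$, where $\dP(g_0,f)=\dP(g_0\dlog[f])\in\WS(\kP)=\kP$ is the residue.

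First I would define $\phi$. For a closed point $P\in X_0$, base change along $k\inj\kP$ gives a canonical $\kP$-rational point of $X_{\kP}$, hence a class $[P]\in\CH_0(X_{\kP})$, and I set $\phi\big((a)_P\big):=\{a,[P]\}_{\kP/k}$ for $a\in\kP$. To see that $\phi$ descends to the cokernel I must check $\phi\circ\d=0$. Fixing $\xi\in X_1$, let $\pi\colon C:=Y^N\to Y=\ol{\{\xi\}}\subset X$ be the normalization, a smooth projective curve with $k(C)=k(\xi)$, and write a given $\omega\in\Omega_{k(\xi)}^1$ as a finite sum $\omega=\sum_j a_j\dlog f_j$ with $a_j\in k(\xi)$ and $f_j\in k(\xi)^{\times}$ (possible since $\Omega^1_{k(\xi)}$ is spanned by forms $a\,db=(ab)\dlog b$). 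Applying (WR) on $C$ with $g_0=a_j$, $f=f_j$ and $x=\eta_\xi$ the canonical $k(\xi)$-point of $X_{k(\xi)}$ coming from $C\to X$, I would identify $s_{P^N}(\eta_\xi)$ with the base change of $[\pi(P^N)]$ and use the projection formula (PF) together with the compatibility of $\Tr$ with the residue (Theorem \ref{thm:trace}) to match $\sum_{P^N}\{\d_{P^N}(a_j,f_j),\,s_{P^N}(\eta_\xi)\}_{k(P^N)/k}$ with $\sum_{P}\phi\big(\d_P^{\xi}(a_j\dlog f_j)\big)$. Summing over $j$ yields $\phi(\d(\omega))=0$ directly from (WR).

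Next I would construct the inverse $\psi$. As in the multiplicative case one first shows, using (PF) and the reduction of $0$-cycles to closed points in general position, that $K(k;\Ga,\cCH_0(X))$ is generated by symbols $\{a,[P]\}_{\kP/k}$ with $P\in X_0$ and $a\in\kP$. I would then define $\psi(\{a,[P]\}_{\kP/k})$ to be the class of $(a)_P$ in $\Coker(\d)$ and verify that $\psi$ kills the relations: (PF) is immediate, while an arbitrary (WR)-relation attached to a base curve $C$ and a closed point $\zeta\in X_{k(C)}$ must be shown to land in $\Im(\d)$. Here one interprets the closure of $\zeta$ in $X\times_k C$ as a $1$-cycle on $X$: if its image $\xi:=\ol{p_X(\zeta)}$ is a curve, its normalization $\wt{Z}$ dominates both $C$ and the normalization of $\ol{\{\xi\}}$, and Lemma \ref{lem:rescan} together with the residue theorem (Theorem \ref{thm:res}) identifies $\psi$ of the relation with a residue $\d^{\xi}(\omega)\in\Im(\d)$; if $p_X(\zeta)$ is a point the specialization contributions cancel. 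Granting this, $\psi\circ\phi=\Id$ and $\phi\circ\psi=\Id$ hold on generators, giving the isomorphism.

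The main obstacle is precisely this last verification: showing that every (WR)-relation coming from an arbitrary curve $C$ over $k$ maps into the subgroup $\Im(\d)$, which a priori is generated only by the curves $\xi$ lying inside $X$. This is where the geometry enters — one passes from the $0$-cycle on $X_{k(C)}$ to its associated $1$-cycle on $X$, puts it in good position so that its components dominate curves $\xi\in X_1$, and compares the local symbol on $C$ with the refined residue maps on the normalizations; the ramification bookkeeping is controlled by Lemma \ref{lem:rescan} and the global vanishing by Theorem \ref{thm:res}.
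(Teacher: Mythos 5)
Your strategy coincides with the paper's own proof: the same map $\phi$ (sending $a\in\kP$ to the symbol $\{a,[P_0]\}_{\kP/k}$ attached to the canonical $\kP$-rational point over $P$), well-definedness via the (WR)-relation on the normalization of $\ol{\{\xi\}}$, an inverse $\psi$ built from trace maps, and the same geometric analysis of an arbitrary (WR)-relation: pass to the proper model $\wt{Z}$ of $K(\zeta)$, split according to whether the image of $\zeta$ in $X$ is a closed point or a curve, control the ramification by Lemma \ref{lem:rescan}, and conclude by the residue theorem, resp.\ by exhibiting the relation as an element of $\Im(\d)$. Your observation that the modulus condition (\ref{eq:modulus2}) is vacuous when $q=0$ is also correct.

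There is, however, a genuine gap in your construction of $\psi$. A homomorphism out of $K(k;\Ga,\cCH_0(X))$ cannot be specified by its values on a generating set of the quotient followed by ``verifying that $\psi$ kills the relations'': the relations (PF) and (WR) are not expressed in terms of your generators $\{a,[P]\}_{\kP/k}$, so that verification cannot even be formulated before $\psi$ is defined on every symbol $\{x,[Q]\}_{E/k}$ with $Q\in (X_E)_0$, i.e.\ by the forced general formula $\psi(\{x,[Q]\}_{E/k})=\Tr_{E(Q)/\kP}(x)$ placed at $P=j_{E/k}(Q)$. Once this is done, a third family of relations, absent from your list, must be checked: well-definedness with respect to rational equivalence on $X_E$, namely that $x\otimes \mathrm{div}_{X_E}(f)$, for $f\in E(\eta)^{\times}$ and $\eta\in (X_E)_1$, is sent into $\Im(\d)$. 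This is a substantive step of exactly the same nature as your (WR)-check, and it is what the paper's three-row commutative diagram is for: the square involving $E\otimes\dlog$ kills rational equivalence through the complex $\d_E$ over $E$, and the trace square --- which is Lemma \ref{lem:trres}, not Theorem \ref{thm:trace} --- descends $\Coker(\d_E)$ to $\Coker(\d)$. The same Lemma \ref{lem:trres} is also the tool you actually need in the curve case of your (WR)-analysis, identifying the relation with $\d^{\xi}\bigl(\Tr_{K(\zeta)/k(\xi)}(g\dlog f)\bigr)\in\Im(\d)$; the residue theorem (Thm.\ \ref{thm:res}) is instead what makes the contributions vanish when the image of $\zeta$ is a closed point $P$, applied to the model regarded as a curve over $\kP$.
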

\begin{proof}
    \textit{Definition of $\phi$}:    
    The map 
    \[
    \phi:\Coker(\d) \to K(k;\Ga,\cCH_0(X))
    \] 
    is defined by 
    $[\sum_P x_P] \mapsto \sum_P \{x_P, [P_0]\}_{\kP/k}$.  
    Here $[\sum_P x_P]$ is the class in $\Coker(\d)$ 
    represented by $\sum_P x_P \in \oplus_P \kP$ 
    and  
    $P_0$ is the canonically defined $\kP$-rational point of $X_{\kP}$ 
    which maps isomorphically to $P$ under the canonical map $X_{\kP} \to X$, 
    $[P_0]$ is the cycle in $\CH_0(X_{\kP})$ 
    represented by the point $P_0$.
    We show that this correspondence 
    is well-defined. 
    For any $\xi \in X_1$ and $g\dlog f\in \Omega_{k(\xi)}^1$, 
    the projection formula (PF) gives  
    \begin{align*}
      \phi\circ \d(g\dlog f) 
    &= \sum_{P\in X_0}\phi\circ \d_P^{\xi}(g\dlog f)\\  
    &= \sum_{P\in X_0} \sum_{P^N \in \pi^{-1}(P)} 
      \phi( \Tr_{k(P^N)/k(P)} \circ \d_{P^N}(g \dlog f))\\
    &= \sum_{P\in X_0} \sum_{P^N \in \pi^{-1}(P)} 
      \{\Tr_{k(P^N)/k(P)} (\d_{P^N}(g \dlog f)), [P_0]\}_{\kP/k}\\
    &= \sum_{P\in X_0}\sum_{P^N \in \pi^{-1}(P)}
      \{\d_{P^N}(g \dlog f), (j_{P^N/P})^{\ast}[P_0]\}_{k(P^N)/k},
    \end{align*}
    where $j_{P^N/P}$ is the canonical map $\kP\inj k(P^N)$. 
    Here, we have $(j_{P^N/P})^{\ast}[P_0] = [P_0 \times_P P^N] = [(P^N)_0]$. 
    For each closed point $P^N$ in $Y^N$, 
    the specialization map is written by 
    $s_{P^N}([\eta_0]) = [(P^N)_0]$, 
    where $[\eta_0] \in \CH_0(X_{k(Y^{N})})$ is the cycle on $X_{k(Y^N)}$
    arising from the generic point $\eta$ of $Y^N$ 
    (\cite{Ful98}, Sect.\ 20.3, see also \cite{RS00}, (2.2.2)). 
    We obtain 
    \[  
    \{\d_{P^N}(g \dlog f), (j_{P^N/P})^{\ast}[P_0]\}_{k(P^N)/k} 
      =  \{\d_{P^N}(g,f), s_{P^N}[\eta_0]\}_{k(P^N)/k}
    \]
    and thus 
    \[
      \phi\circ \d(g\dlog f) 
      = \sum_{P^N \in (Y^N)_0} \{\d_{P^N}(g,f), s_{P^N}[\eta_0]\}_{k(P^N)/k}. 
    \]
    Therefore the condition (WR) on the mixed $K$-group 
    (Def.\ \ref{def:mix}) 
    implies the well-definedness of $\phi$.

    \sn
    \textit{Definition of $\psi$}: 
    A map  
    \[
      \psi: K(k;\Ga, \cCH_0(X)) \to \Coker(\d)
    \] 
    is defined by   
    the following diagram (\Cf \cite{Bloch81}, Sect.\ 1):   
    For each finite extension $E/k$, 
    \[
    \vcenter{
    \entrymodifiers={!! <0pt, .8ex>+}
    \xymatrix@C=5mm{
      \displaystyle{\bigoplus_{\eta \in (X_E)_1} E \otimes_{\Z} E(\eta)^{\times}} 
        \ar[r] \ar[d]_-{E\otimes \dlog}&  
        \displaystyle{\bigoplus_{Q\in (X_E)_0} E}  \ar@{^{(}->}[d] \ar[r] & 
        E\otimesZ \CH_0(X_E)\ar[d] \ar[r] & 0 \\
      \displaystyle{\bigoplus_{\eta \in (X_E)_1} \Omega_{E(\eta)}^1} \ar[d]_-{\Tr}\ar[r]^-{\d_E} &
        \displaystyle{\bigoplus_{Q\in (X_E)_0}{E(Q)}} \ar[d]^-{\Tr} \ar[r] & 
        \Coker(\d_E) \ar[r]\ar[d] & 0 \\
      \displaystyle{\bigoplus_{\xi \in X_1}\Omega_{k(\xi)}^1} \ar^-{\d}[r] &
        \displaystyle{\bigoplus_{P\in X_0}\kP}\ar[r] & \Coker(\d) \ar[r] & 0 .
    }
    }
    \]
    Here, 
    the vertical maps $\Tr$ in the lower left square are given by trace maps 
    and Lemma \ref{lem:trres}. 
    Explicitly, 
    for $x\in E$ and $Q\in (X_E)_0$, 
    we have
    \[
      \psi(\{x,[Q]\}_{E/k}) = \Tr_{E(Q)/k(P)}(x) \quad \mathrm{at}\ P = j_{E/k}(Q) \in X_0, 
    \]
    where 
    $j_{E/k}: X_E \to X$ is the canonical map. 
    We show that $\psi$ factors through $K(k;\Ga, \cCH_0(X))$. 
    For any finite extension $j = j_{E_2/E_1}: E_1\inj E_2$ over $k$, 
    take $x\in E_1$ and a closed point $Q_{2}$ in $X_{E_2}$. 
    Then, $j_{\ast}[Q_2] = [E_2(Q_2):E_1(Q_1)][Q_1]$, 
    where $Q_1$ is the image of $Q_2$ in $X_{E_1}$. 
    We obtain the following equalities 
    \begin{align*}
      \psi(\{x,[Q_2]\}_{E_2/k}) 
      &=  \Tr_{E_2(Q_2)/\kP}(x) \\
      &= [E_2(Q_2):E_1(Q_1)]\Tr_{E_1(Q_1)/\kP}(x)\\
      &= \psi(\{x, j_{\ast}[Q_2]\}_{E_1/k}).
    \end{align*}
    On the other hand, 
    we assume $x\in E_2$, $Q_{1} \in (X_{E_1})_0$. 
    Denoting by $j^{\ast}[Q_1] = [j^{-1}(Q_1)]  = \sum_Q m_Q [Q] \in \CH_0(X_{E_2})$ 
    and $P = j_{E_1/k}(Q_1)$, we have  
    \begin{align*}
      \psi(\{x, j^{\ast}[Q_1]\}_{E_2/k}) 
      &= \sum_Q m_Q \psi(\{x, [Q]\}_{E_2/k}) \\
      &= \sum_Q m_Q \Tr_{E_2(Q)/k(P)}(x) \\
      &= \Tr_{E_1(Q_1)/\kP} \circ \Tr_{E_2/E_1}(x) \\
      &= \psi(\{\Tr_{E_2/E_1}(x), [Q_1]\}_{E_1/k}). 
    \end{align*}
    As a result, the map $\psi$ kills the elements of the form (PF) 
    and this gives $\psi:\Ga \otimesM \cCH_0(X)(k) \to \Coker(\d)$. 
    Next we consider the relation (WR). 
    Let $K = k(C)$ be the function field of a projective smooth curve $C$ over $k$. 
    For $f\in \Kt, g \in K$  and  $\xi$ a closed point in $X_K$, 
    our claim is now 
    \begin{equation}
      \label{eq:claim1}
      \sum_{Q \in C_0}\psi(\{\d_Q(g, f), s_Q([\xi])\}_{k(Q)/k}) = 0.
    \end{equation}
    To show this claim, 
    first we assume 
    $K(\xi) = K$.
    Under this assumption, 
    for each closed point $Q$ in $C$, 
    the valuative criterion for properness gives the diagram 
    \[
      \xymatrix{
                        & \Spec K\ar[d] \ar[r]^-{\xi} & X_K \ar[r] & X\ar[d] \\
      \Spec k(Q) \ar[r] &  \Spec \O_{C,Q} \ar@{-->}[urr]\ar[rr]& & \Spec k. 
      }
    \]
    The composition $\Spec K \onto{\xi} X_K \to X$ 
    extends uniquely to $\pi:C \to X$. 
    Recall that $Q_0$ is the canonical point of $C_{k(Q)}$ 
    determined by $Q$. 
    We denote by $P_0 = \pi_Q(Q_0)$ 
    the closed point of $X_{k(Q)}$ where 
    $\pi_Q:C_{k(Q)} \to X_{k(Q)}$ is the base change of $\pi$. 
    The specialization map is now $s_Q([\xi]) = (\pi_{Q})_{\ast}[Q_0] = [P_0]$ 
    in $\CH_0(X_{k(Q)})$ (\Cf \cite{RS00}, (2.2.2)). 
    If the image of $\xi$ in $X$ 
    gives a closed point $P$ in $X$, 
    then $\pi$ is a constant map;  
    $P = \pi(Q)$ for any $Q\in C_0$. 
    Thus 
    the curve $C$ is defined over $k(P)$ 
    and $P = j_{k(Q)/k}(P_0)$. 
    We obtain 
    \begin{align*}
    &\sum_{Q\in C_0}\psi(\{\d_Q(g, f), s_Q([\xi])\}_{k(Q)/k})\\ 
     &\quad= \sum_{Q\in C_0} \psi(\{\d_Q(g,f), [P_0]\}_{k(Q)/k})\\
     &\quad= \left(\sum_{Q\in C_0} \Tr_{k(Q)/k(P)}\d_Q(g\dlog f)\right) 
    \ \mathrm{at}\ P\in X_0.
    \end{align*}
    The residue theorem for $C$ 
    over $k(P)$ (Thm.\ \ref{thm:res}) 
    gives the claim. 
    On the other hand, 
    when the image $\eta = \pi(\xi)$ of $\xi$ is a point of dimension 1 in $X$, 
    we have 
    \begin{align*}
    &\sum_{Q\in C_0}\psi(\{\d_Q(g, f), s_Q([\xi])\}_{k(Q)/k})\\ 
     &\quad = \sum_{Q\in C_0} \psi(\{\d_Q(g,f), [P_0]\}_{k(Q)/k})\\
     &\quad =  \sum_{Q\in C_0} \left( \Tr_{k(Q)/k(P)}\circ \d_Q(g\dlog f)\ 
       \mathrm{at}\ P=\pi(Q) \in X_0 \right)\\
     &\quad =  \sum_{P\in X_0} \sum_{Q \in \pi^{-1}(P)} 
       \Tr_{k(Q)/k(P)}\circ \d_Q(g \dlog f)\\
     &\quad \stackrel{(\ast)}{=}  \sum_{P\in X_0}\, \dP\circ \Tr_{k(\xi)/k(\eta)}(g\dlog f)\\
     &\quad = \d\circ \Tr_{K/k(\eta)}(g\dlog f)\\ 
     &\quad =0\quad \mathrm{in}\ \Coker(\d). 
    \end{align*}
    Here, the equality ($\ast$) follows from Lemma \ref{lem:trres} below. 

    Next, we show the claim for arbitrary $\xi\in (X_{K})_0$. 
    Then $K' :=K(\xi)$ is a finite extension over $K$. 
    Let $C'$ be the regular proper model of $K'$. 
    For each $Q\in C_0$, the following diagram is commutative: 
    \[
    \vcenter{
    \entrymodifiers={!! <0pt, .8ex>+}
      \xymatrix@C=15mm{
      \CH_0(X_{K'}) \ar[d]_{(j_{K'/K})_{\ast}}\ar[r]^-{e(Q'/Q) s_{Q'}} &
        \displaystyle{\bigoplus_{Q'\in {C'}_0,\ Q'\mid Q}\CH_0(X_{k(Q')})}
        \ar[d]^-{\sum (j_{k(Q')/k(Q)})_{\ast}}\\ 
      \CH_0(X_K) \ar[r]^{s_Q} & \CH_0(X_{k(Q)}),
    }
    }
    \]
    where $e(Q'/Q)$ is the ramification index of 
    $Q'$ over $Q$ (\cite{Ful98}, Sect.\ 20.3). 
    Denoting by $\xi'$ the closed point in $X_{K'}$ 
    which is determined by $\xi$, 
    we have  
    \[
      s_Q([\xi]) = s_Q \circ (j_{K'/K})_{\ast}([\xi']) 
      = \sum_{Q'\mid Q}e(Q'/Q) j_{k(Q')/k(Q)}\circ s_{Q'}([\xi']).
    \]
    The projection formula (PF) and Lemma \ref{lem:rescan} imply  
    \begin{align*}
    &  \sum_{Q \in C_0}\{\d_Q(g, f), s_Q([\xi])\}_{k(Q)/k}\\ 
    &\quad = \sum_{Q \in C_0}\sum_{Q'\mid Q}e(Q'/Q) \{\d_Q(g,f), (j_{k(Q')/k(Q)})_{\ast}s_{Q'}([\xi'])\}_{k(Q)/k}\\
    &\quad = \sum_{Q' \in C'_0}  \{e(Q'/Q)\d_Q(g,f), s_{Q'}([\xi'])\}_{k(Q')/k}\\
    &\quad = \sum_{Q' \in C'_0}  \{\d_{Q'}(g,f), s_{Q'}([\xi'])\}_{k(Q')/k}.
    \end{align*}
    Therefore, the claim is now reduced to the case 
    treated before.

    \sn
    \textit{Proof of the bijection}: 
    It is easy to see that $\psi\circ \phi = \Id$ and thus 
    we show that the map $\phi$ is surjective. 
    For any symbol $\{x,[Q]\}_{E/k}$ with  
    a closed point $Q$ of $X_E$, 
    take $y\in E(Q)$ such that $\Tr_{E(Q)/E}(y) = x$. 
    By the projection formula (PF), we have 
    $\{x,[Q]\}_{E/k} = \{y, (j_{E(Q)/E})^{\ast}[Q]\}_{E(Q)/E}$. 
    Hence we reduce to the case $E(Q) = E$. 
    In this case $E(Q) = E$; $Q_0 = Q$,  
    denoting $P := j_{E/k}(Q)$ the closed point in $X$ 
    determined by $j_{E/k}:X_E \to X$, 
    we have $\kP = k(P_0) = E$. 
    Thus the surjectivity follows from 
    $\phi(x) = \{ x, [P_0]\}_{\kP/k} = \{x, [Q]\}_{E/k}$.
\end{proof}

\begin{lemma}
    \label{lem:trres}
    Let $C' \to C$ be a dominant morphism 
    of projective curves over $k$. 
    If we fix a closed point $P$ of $C$,  
    we have the following commutative diagram: 
    \[
    \vcenter{
    \entrymodifiers={!! <0pt, .8ex>+}
      \xymatrix@C=15mm{
        \Omega^1_{k(C')} \ar[d]_{\Tr_{k(C')/k(C)}}\ar[r]^-{\oplus \d_{P'}} 
          & \displaystyle{\bigoplus_{P' \mid P} k(P')}\ar[d]^-{\sum \Tr_{k(P')/k(P)}} \\
        \Omega^1_{k(C)} \ar[r]^{\dP} & \kP,
      }
    }
    \]
    where 
    $P'$ runs through all points in $C'$ lies above $P$, 
    and 
    $\d_P$ and $\d_{P'}$ 
    are the maps defined in 
    {\rm (\ref{eq:dP+})}.  
\end{lemma}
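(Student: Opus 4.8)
The plan is to reduce the assertion to a local computation at the completions and then to treat the unramified and totally ramified parts of the residue-field extension separately.

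First I would reduce to the case where $C$ and $C'$ are smooth. The dominant morphism $C'\to C$ lifts canonically to the normalizations, and by the definition (\ref{eq:dP+}) the maps $\dP$ and $\d_{P'}$ are computed there; using the transitivity of the trace (Theorem \ref{thm:trace}(c)) to reorganize the sums over points lying above $P$, we may assume $C,C'$ smooth, so that $\dP$ and $\d_{P'}$ are the refined residue maps $\Res_t$ and $\Res_{t'}$ attached to the completions $k(C)_P\simeq \kP(\!(t)\!)$ and $k(C')_{P'}\simeq k(P')(\!(t')\!)$. Writing $K=k(C)$ and $K'=k(C')$, the refined residue $\dP$ depends only on the image of $\omega$ in $\Omega^1_{\KP}$, and the decomposition $K'\otimes_K\KP=\prod_{P'\mid P}K'_{P'}$ shows that it suffices to know that the image of $\Tr_{K'/K}(\omega)$ in $\Omega^1_{\KP}$ equals $\sum_{P'\mid P}\Tr_{K'_{P'}/\KP}(\omega_{P'})$, where $\omega_{P'}$ is the image of $\omega$ in $\Omega^1_{K'_{P'}}$. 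Granting this compatibility of the trace with completion, the lemma reduces to the local identity
\[
  \Res_t\circ \Tr_{K'_{P'}/\KP}=\Tr_{k(P')/\kP}\circ \Res_{t'}
\]
of maps $\Omega^1_{K'_{P'}}\to\kP$, one for each $P'\mid P$.

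To prove this identity I would factor $\KP\subset L\subset K'_{P'}$, with $L/\KP$ the maximal unramified subextension and $K'_{P'}/L$ totally ramified. Since $k$ is perfect, the residue fields $\kP$ and $k(P')$ are perfect and the extension $k(P')/\kP$ is separable; it is realized entirely by $L/\KP$, the residue extension of $K'_{P'}/L$ being trivial. By the transitivity of the trace (Theorem \ref{thm:trace}(c)) and of the trace on residue fields, it suffices to establish the identity for $L/\KP$ and for $K'_{P'}/L$ separately. For the unramified step $L/\KP$ is separable, so $\Omega^1_L\simeq L\otimes_{\KP}\Omega^1_{\KP}$ and $\Tr^1_{L/\KP}=\Tr^0_{L/\KP}\otimes\Id$ by Theorem \ref{thm:trace}(b); writing $\omega\in\Omega^1_L$ through the expansion (\ref{eq:exp}) and recalling that $\Res_t$ extracts the coefficient of $dt/t$, on which the trace acts via $\Tr^0$, one checks directly that $\Res_t\circ\Tr_{L/\KP}=\Tr_{k(P')/\kP}\circ\Res_t$. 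For the totally ramified step the residue fields coincide and the claim is $\Res_t\circ\Tr_{K'_{P'}/L}=\Res_{t'}$; when $\Char(k)=0$, or more generally when $K'_{P'}/L$ is tamely ramified and hence separable, I would normalize the uniformizers so that $t=(t')^e$ as in the proof of Lemma \ref{lem:rescan}, apply $\Tr^1=\Tr^0\otimes\Id$ (Theorem \ref{thm:trace}(b)), and compute with (\ref{eq:exp}): the coefficient of $dt/t$ in $\Tr_{K'_{P'}/L}(\omega)$ equals the coefficient of $dt'/t'$ in $\omega$, the factors of $e$ cancelling against $\Tr^0_{K'_{P'}/L}(1)=e$ and $\Tr^0_{K'_{P'}/L}((t')^j)=0$ for $0<j<e$. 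I note that, for ordinary K\"ahler differentials, this whole local identity is the classical compatibility of residues with the trace (\cite{Kunz}), available because for $S=\{1\}$ R\"ulling's trace and residue coincide with the classical ones (Theorem \ref{thm:trace}(a)); the argument above reproves it in the form needed here.

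The main obstacle is the remaining wildly ramified (inseparable) case in characteristic $p$, where $\Tr^1=\Tr^0\otimes\Id$ is unavailable; this same inseparability is what makes the compatibility of the trace with completion asserted above nontrivial. Here I would mimic the descent used in Lemma \ref{lem:rescan}: lift the inclusion $L=k(P')(\!(t)\!)\inj K'_{P'}=k(P')(\!(t')\!)$ to $W(k(P'))(\!(t)\!)\inj W(k(P'))(\!(t')\!)$ sending $t\mapsto(t')^e\wt{u}$, pass to the fraction field $E$ of $W(k(P'))$ to reach characteristic $0$, and transport the characteristic-$0$ identity back down using the naturality of the residue map (Proposition \ref{prop:res}(b)) and of R\"ulling's trace together with the surjection $W_m\Omega^1_{W(k(P'))(\!(t)\!)}\surj W_m\Omega^1_{k(P')(\!(t)\!)}$. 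Verifying that the trace descends along this lifting, exactly the analogue for $\Tr$ of the residue naturality exploited in Lemma \ref{lem:rescan}, is the delicate point; once it is in place the totally ramified case, and with it the local identity and the lemma, follow.
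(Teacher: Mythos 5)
Your reduction steps (normalization plus transitivity of the trace, passage to completions, and splitting the local extension into its maximal unramified part followed by a totally ramified part) coincide with the paper's, and your treatment of the unramified step via the expansion (\ref{eq:exp}) and Theorem \ref{thm:trace}(b) is essentially the paper's computation $\d_K(\Tr_{K^{\ur}/K}(\omega)) = \Tr_{k'/k}(b_{0})$. Your tame totally ramified computation is also correct, with one caveat: the normalization $t=(t')^e$ requires an $e$-th root of a unit in the residue field, so you must first base-change to $\kbar$ (as is done in the proof of Lemma \ref{lem:rescan}); both the trace and the refined residue commute with that base change.

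The genuine gap is your wildly ramified case. Your plan requires ``the analogue for $\Tr$ of the residue naturality'' along the lift $W(k(P'))(\!(t)\!) \inj W(k(P'))(\!(t')\!)$ and along its reduction to $k(P')(\!(t)\!) \inj k(P')(\!(t')\!)$, but no such statement is available: R\"ulling's trace (Theorem \ref{thm:trace}) is defined only for finite extensions of \emph{fields}, so there is not even a trace map attached to these ring extensions to descend, whereas the residue map exploited in Lemma \ref{lem:rescan} is defined for arbitrary $\Z_{(p)}$-algebras and is natural in the ring (Proposition \ref{prop:res}(b)). Constructing a trace for the finite free ring extension and proving its compatibility both with inverting $p$ and with reduction modulo $p$ would be a substantial new argument, not a routine verification. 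Moreover the gap is avoidable, and this is precisely where the paper goes another way: once you are in the totally ramified case the residue fields coincide, so the refined residue $\d_K$ agrees with the classical residue $\Res_K$ (the paper makes the harmless further reduction to $k=\kbar$ to see this), and the desired identity \emph{is} the classical residue--trace compatibility (\cite{Kunz}, Thm.\ 17.6), which holds for arbitrary finite extensions, wildly ramified and inseparable ones included. You in fact remark that the local identity ``is the classical compatibility of residues with the trace,'' but you then treat it as something to be reproved rather than invoked; invoking it closes the wild case immediately and makes the lifting argument unnecessary.
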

\begin{proof}
    Since the trace maps are transitive (Thm.\ \ref{thm:trace}, (c)), 
    by taking the normalization, we may assume that 
    $C$ and $C'$ are smooth. 
    Considering the completion $k(C')_{P'}$ and $k(C)_P$ 
    we have 
    \[
      \xymatrix@C=15mm{
        \Omega_{k(C')}^1 \ar[r]\ar[d]_{\Tr_{k(C')/k(C)}} & 
          \Omega_{k(C')_{P'}}^1\ar[d]^{\Tr_{k(C')_{P'}/k(C)_P}}\\
        \Omega_{k(C)}^1 \ar[r] & \Omega_{k(C)_P}^1.
      }
    \]
    This carries over to the local situation. 
    Let $k'/k$ be a finite extension and 
    consider the local fields $K'= k'(\!(t')\!)$ and $K = \ktt$. 
    Now $K^{\ur} = k'(\!(t)\!)$ is the maximal unramified extension in $K'$ over $K$. 
    Since any element $\omega \in \Omega_{K^{\ur}}^1$ is 
    written as 
    \[
      \omega = \sum_{j\in \Z}a_{j}t^j + b_{j}t^{j-1}dt.
    \]
    for some 
    $a_{j}\in \Omega_{k'}^1$, 
    $b_{j} \in k'$, 
    we have 
    \[
    \d_K(\Tr_{K^{\ur}/K}(\omega)) = \Tr_{k'/k}(b_{0}) = \Tr_{k'/k}\circ \d_{K^{\ur}}(\omega).
    \]
    Thus we may assume that $k = k'$ and hence $K^{\ur} = K$. 
    We denote by $\ol{k}$ an algebraic closure of $k$. 
    By the very definition, 
    for any $\omega\in \Omega_{K'}^1$, 
    we have $\Res_{K'}(\omega) = \Res_{\ol{k}(\!(t')\!)}(\omega)$ 
    and the trace map is compatible with base change 
    $\Tr_{\kbar(\!(t')\!)/\kbar(\!(t)\!)}(\omega) = \Tr_{k(\!(t')\!)/k(\!(t)\!)}(\omega)$. 
    We further reduce to the case $k = \kbar$. 
    In this case the residue map $\d_K$ and the residue map $\Res_K$ 
    coincide and the required assertion is well-known (\cite{Kunz}, Thm.\ 17.6).
\end{proof}

From Corollary \ref{cor:WS}, 
the ghost map gives the following. 
\begin{corollary}
    Let $X$ be a projective smooth variety over a perfect field $k$ 
    and  $S$ a finite truncation set. 
    If $\Char(k) = 0$, 
    we have 
    \[
      \Coker \left(\d:\bigoplus_{\xi \in X_1}\WS\Omega^{1}_{k(\xi)} \to 
      \bigoplus_{P\in X_0}\WS(k(P)) \right) 
      \isomto K(k;\WS, \cCH_0(X))
    \]
\end{corollary}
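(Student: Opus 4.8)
The plan is to reproduce the proof of Corollary \ref{cor:WS} verbatim, with the single factor $\cCH_0(X)$ and Theorem \ref{thm:SK1+} playing the roles that the multiplicative factors $\Gm$ and Theorem \ref{thm:main} played there. Since $\Char(k)=0$, the ghost map is an isomorphism of Mackey functors $\gh=(\gh_s)_{s\in S}:\WS\isomto\Ga^{\#S}$: on each finite extension $E/k$ it is the ring isomorphism $\WS(E)\isomto E^{\#S}$, and by \cite{Rue07}, Appendix A it commutes with the restriction maps $j^{\ast}$ and the norm maps $j_{\ast}$.

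First I would define, for each $s\in S$, the homomorphism
\[
  \gamma_s:K(k;\WS,\cCH_0(X))\to K(k;\Ga,\cCH_0(X)),\qquad
  \{w,x\}_{E/k}\mapsto\{\gh_s(w),x\}_{E/k},
\]
and check that it is well defined. Compatibility with (PF) is immediate from the fact that $\gh_s$ commutes with $j_{\ast}$ and $j^{\ast}$. For the relation (WR), which in the present case $q=0$ reads $\sum_{P\in C_0}\dP(g_0,f)\otimes s_P(x)\in R$ with $g_0\in\WS(K)$, $f\in K^{\times}$ and $x\in\CH_0(X_K)$ (and which carries no modulus condition, that being imposed only for $q\ge1$), the one point to verify is the identity
\[
  \gh_s(\dP(g_0,f))=\dP(\gh_s(g_0),f),
\]
which is exactly the computation performed in the proof of Corollary \ref{cor:WS}, relying on Proposition \ref{prop:res} (c) and $\Gh_s(g_0\dlog[f])=\gh_s(g_0)\dlog[f]$. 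Thus $\gamma_s$ sends the relation (WR) for $\WS$ into the relation (WR) for $\Ga$, hence descends.

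Assembling the $\gamma_s$ and using that the Mackey product commutes with finite direct sums (as in the proof of Lemma \ref{lem:isom} (ii)), the isomorphism $\gh:\WS\isomto\Ga^{\#S}$ yields
\[
  \gamma=(\gamma_s)_{s\in S}:K(k;\WS,\cCH_0(X))\isomto K(k;\Ga,\cCH_0(X))^S.
\]
Theorem \ref{thm:SK1+} identifies each factor on the right with $\Coker(\d_{\{1\}})$, where $\d_{\{1\}}:\bigoplus_{\xi\in X_1}\Omega^1_{k(\xi)}\to\bigoplus_{P\in X_0}\kP$ is the de Rham boundary map, i.e.\ the case $S=\{1\}$ of (\ref{eq:dP+}). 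It then remains to match $\Coker(\d_{\{1\}})^S$ with $\Coker(\d)$ for the de Rham--Witt boundary map $\d=\d_S$ appearing in the statement. For this I would apply the ghost isomorphism (\ref{eq:Gh}) degreewise, $\WSOmega_{k(\xi)}^1\isomto(\Omega^1_{k(\xi)})^S$ and $\WS(\kP)\isomto\kP^{\#S}$, and observe that $\d_S$ respects this decomposition: since the residue map commutes with $F_s$ (Proposition \ref{prop:res} (c)) and the trace map commutes with $F_s$ (Theorem \ref{thm:trace} (d)), each ghost component satisfies $\gh_s\circ\d_S=\d_{\{1\}}\circ\Gh_s$. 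Hence under the ghost isomorphisms $\d_S$ becomes the $S$-fold product of $\d_{\{1\}}$, so $\Coker(\d_S)\isomto\Coker(\d_{\{1\}})^S$; composing the three isomorphisms gives the assertion.

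The main, and essentially the only, point requiring care is this last compatibility: that the de Rham--Witt boundary map $\d_S$ splits, under the ghost isomorphism, into $\#S$ copies of the de Rham boundary map $\d_{\{1\}}$. This is no new phenomenon, being forced by the naturality of the residue and trace maps with respect to $F_s$ and restriction that underlies all of Section \ref{sec:Witt}; the rescaling of the differential implicit in (\ref{eq:Gh}) is harmless, as it does not change the cokernel. All remaining steps are a transcription of Corollary \ref{cor:WS}.
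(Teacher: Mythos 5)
Your proposal is correct and takes essentially the same route as the paper, whose entire proof is the one-line remark ``From Corollary \ref{cor:WS}, the ghost map gives the following'': you have simply made that reduction explicit, namely the ghost-component maps $\gamma_s$ (well-defined by the same (PF)/(WR) computation as in the proof of Corollary \ref{cor:WS}, and indeed with no modulus condition to check since here $q=0$), the factorwise application of Theorem \ref{thm:SK1+}, and the compatibility $\gh_s\circ\d_S=\d_{\{1\}}\circ\Gh_s$ coming from Proposition \ref{prop:res} (c) and Theorem \ref{thm:trace} (d). The level of detail you supply matches, and in places exceeds, what the paper itself records.
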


We also have the following complex  
\begin{equation}
\label{eq:cpx}
  \bigoplus_{\xi \in X_1}\WSOmega_{k(\xi)}^1 \onlong{\d} 
  \bigoplus_{P\in X_0} \WS(\kP) \onlong{\Tr} \WS(k)
\end{equation}
where $\Tr$ is defined by the trace maps 
$\Tr := \sum_P \Tr_{\kP/k}$. 
The fact $\Tr \circ \d = 0$ is the residue theorem (Thm.\ \ref{thm:res}). 

\begin{corollary}
    \label{cor:SK1+}
    Let $X$ be a projective smooth and geometrically connected curve over a perfect field $k$. 
    If we assume $X(k)\neq \emptyset$, 
    putting $V(X)^+ := \Ker(\Tr)/\Im(\d)$ 
    we obtain 
    \[
    K(k; \Ga, J_X) \simeq V(X)^+.
    \]
\end{corollary}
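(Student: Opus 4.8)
The plan is to deduce the corollary from Theorem~\ref{thm:SK1+} by splitting off the contribution of a $k$-rational point, in exact parallel with the Milnor-theoretic isomorphism $K(k;\Gm,J_X)\isomto V(X)$ recalled after Theorem~\ref{thm:SK1}. Since $X$ is a curve, the set $X_1$ consists of the single generic point $\eta$ with $k(\eta)=k(X)$, so Theorem~\ref{thm:SK1+} specializes to a canonical isomorphism
\[
  \phi:\Coker\Bigl(\d:\Omega^1_{k(X)}\to\bigoplus_{P\in X_0}\kP\Bigr)\isomto K(k;\Ga,\cCH_0(X)).
\]
First I would record that, by the residue theorem (Thm.~\ref{thm:res}), the trace map $\Tr=\sum_P\Tr_{\kP/k}$ satisfies $\Tr\circ\d=0$ and so descends to $\ol{\Tr}:\Coker(\d)\to k$ with $\Ker(\ol{\Tr})=V(X)^+$. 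Fixing a point $O\in X(k)$, the component $\kP=k$ at $O$ shows $\ol{\Tr}$ is surjective, giving a short exact sequence
\[
  0\to V(X)^+\to\Coker(\d)\onlong{\ol{\Tr}}k\to 0.
\]

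On the $K$-group side, the point $O$ splits the degree map $\deg:\cCH_0(X)\to\Z$ of Mackey functors, yielding $\cCH_0(X)\simeq\cA_0(X)\oplus\Z$ with $\cA_0(X)\simeq J_X$. Using the split short exact sequence constructed in the proof of Lemma~\ref{lem:mix}, together with the identification $K(k;\Ga,\cA_0(X))\simeq K(k;\Ga,J_X)$, this produces
\[
  0\to K(k;\Ga,J_X)\to K(k;\Ga,\cCH_0(X))\onlong{p}K(k;\Ga)\to 0,
\]
where $K(k;\Ga)\simeq\Ga(k)=k$ by Lemma~\ref{lem:isom}~(i), under which $\{w\}_{E/k}\mapsto\Tr_{E/k}(w)$.

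The decisive step is to check that $\phi$ matches these two sequences, namely that $p\circ\phi=\ol{\Tr}$; granting this, the isomorphism $\phi$ carries $\Ker(\ol{\Tr})=V(X)^+$ onto $\Ker(p)=K(k;\Ga,J_X)$, which is the assertion. To verify $p\circ\phi=\ol{\Tr}$ I would compute on representatives: $\phi([\sum_P x_P])=\sum_P\{x_P,[P_0]\}_{\kP/k}$, where $[P_0]\in\CH_0(X_{\kP})$ is the canonical lift of $P$, a cycle of degree one. Writing $[P_0]=([P_0]-[O_{\kP}])+[O_{\kP}]$ with $O_{\kP}:=O\times_k\kP$ splits each symbol into a degree-zero part $\{x_P,[P_0]-[O_{\kP}]\}_{\kP/k}\in K(k;\Ga,J_X)$ and a part $\{x_P,[O_{\kP}]\}_{\kP/k}$ coming from the unit factor $\Z$; the latter maps under $p$ to $\Tr_{\kP/k}(x_P)$. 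Summing over $P$ then gives $p(\phi([\sum_P x_P]))=\Tr(\sum_P x_P)=\ol{\Tr}([\sum_P x_P])$, as required.

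The hard part will be this last compatibility: one must track carefully how the canonical lift $[P_0]$ decomposes under $\cCH_0(X)\simeq\cA_0(X)\oplus\Z$ and confirm that projection to the unit factor recovers the trace. Entangled with this is the identification $K(k;\Ga,\cA_0(X))\simeq K(k;\Ga,J_X)$, which requires that the isomorphism $\cA_0(X)\simeq J_X$ intertwine the specialization maps of Definition~\ref{def:mix} with Somekawa's local symbol of Definition~\ref{def:Somekawa} --- the additive counterpart of the identification underlying $K(k;\Gm,J_X)\isomto V(X)$.
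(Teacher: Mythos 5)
Your proposal is correct and follows essentially the same route as the paper: the paper's proof consists precisely of the commutative diagram matching the short exact sequence $0\to K(k;\Ga,J_X)\to K(k;\Ga,\cCH_0(X))\to K(k;\Ga)\to 0$ (from the splitting $\cCH_0(X)\simeq\cA_0(X)\oplus\Z$ and $\cA_0(X)\simeq J_X$ in the proof of Lemma~\ref{lem:mix}~(ii)) against $0\to V(X)^+\to\Coker(\d)\onlong{\Tr}\Ga(k)\to 0$ via the isomorphism of Theorem~\ref{thm:SK1+}. Your explicit verification that $p\circ\phi=\ol{\Tr}$, and your flagging of the compatibility between specialization maps and local symbols under $\cA_0(X)\simeq J_X$, are exactly the points the paper leaves implicit in asserting that its diagram commutes.
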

\begin{proof}
    Under the assumption $X(k)\neq \emptyset$, 
    the assertion follows from the diagram below 
    (see the proof of Lem.\ \ref{lem:mix} (ii)): 
    \[
     \xymatrix{
      0\ar[r]&K(k; \Ga,J_X) \ar[d]\ar[r]& 
        K(k;\Ga,\cCH_0(X))\ar[d]_{\simeq}^{\psi} \ar[r] &
        K(k;\Ga)\ar[d]^{\simeq} \ar[r] & 0\\
      0\ar[r]& V(X)^+ \ar[r] & \Coker(\d) \ar[r]^{\Tr} & \Ga(k) \ar[r] & 0. 
      }
    \]
\end{proof}

From Lemma \ref{lem:mix}, 
we obtain $K(k;\Ga, J_X) = 0$ if $\Char(k)>0$,  
and this gives the following corollary:
\begin{corollary}
    If $\Char(k)>0$, 
    then we have $V(X)^+  = 0$. In other words, 
    the sequence $(\ref{eq:cpx})$ is exact when $S=\{1\}$.
\end{corollary}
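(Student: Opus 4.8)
The plan is to deduce this directly from Corollary \ref{cor:SK1+} together with the vanishing already recorded in positive characteristic, so that no genuinely new computation is needed. First I would unwind what the two formulations mean. Specializing the complex (\ref{eq:cpx}) to $S = \{1\}$ gives $\W_{\{1\}} = \Ga$, hence $\WS(\kP) = \kP$, $\WS(k) = k$, and $\WSOmega^1_{k(\xi)} = \Omega^1_{k(\xi)}$ (the absolute de Rham complex, as noted after Proposition \ref{prop:dW}); since $X$ is a curve, the index set $X_1$ consists of the single generic point $\xi$ with $k(\xi) = k(X)$. Thus for $S = \{1\}$ the sequence (\ref{eq:cpx}) is exactly $\Omega^1_{k(X)} \onlong{\d} \bigoplus_{P} \kP \onlong{\Tr} k$, whose middle homology is by definition $V(X)^+ = \Ker(\Tr)/\Im(\d)$. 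Proving exactness at the middle is therefore the same as proving $V(X)^+ = 0$.

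Next I would invoke Corollary \ref{cor:SK1+}, which under the standing hypotheses inherited here (namely $X$ projective smooth geometrically connected with $X(k) \neq \emptyset$) provides a canonical isomorphism $K(k; \Ga, J_X) \isomto V(X)^+$. It then suffices to show that the Somekawa group $K(k; \Ga, J_X)$ vanishes when $\Char(k) > 0$. This is precisely what Lemma \ref{lem:mix}\,(ii) gives; equivalently it follows at once from Lemma \ref{lem:trivial}, since $\Ga = \W_{\{1\}}$ and the Jacobian $J_X$ is an abelian variety, hence a split semi-abelian variety (with trivial torus) over the perfect field $k$ of positive characteristic. Combining the isomorphism with this vanishing yields $V(X)^+ = 0$.

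Finally I would translate the vanishing back into exactness. The residue theorem (Thm.\ \ref{thm:res}) already furnishes $\Tr \circ \d = 0$, that is $\Im(\d) \subseteq \Ker(\Tr)$, so (\ref{eq:cpx}) is indeed a complex for $S = \{1\}$; the equality $V(X)^+ = 0$ upgrades this inclusion to $\Im(\d) = \Ker(\Tr)$, which is exactness at the middle term. I do not expect a substantive obstacle here: the content lives entirely in the two cited results, and the only care required is bookkeeping, namely checking that the maps $\d$ and $\Tr$ used to define $V(X)^+$ in Corollary \ref{cor:SK1+} are literally the $S = \{1\}$ instances of the maps in (\ref{eq:cpx}), and that the hypothesis $X(k) \neq \emptyset$ needed for Corollary \ref{cor:SK1+} is in force. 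Granting these identifications, the proof is a one-line composition of Corollary \ref{cor:SK1+} and Lemma \ref{lem:mix}\,(ii).
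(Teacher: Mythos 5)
Your proof is correct and follows essentially the same route as the paper: the paper likewise obtains the result by combining Corollary \ref{cor:SK1+} with the vanishing $K(k;\Ga, J_X)=0$ in positive characteristic, which comes from Lemma \ref{lem:mix} (itself resting on Lemma \ref{lem:trivial}). The bookkeeping you flag --- that the $S=\{1\}$ instance of the complex $(\ref{eq:cpx})$ for a curve is exactly the complex defining $V(X)^+$, and that the hypothesis $X(k)\neq\emptyset$ is in force --- is precisely what the paper leaves implicit.
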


\def\cprime{$'$}
\providecommand{\bysame}{\leavevmode\hbox to3em{\hrulefill}\thinspace}
\providecommand{\href}[2]{#2}

\noindent
Toshiro Hiranouchi \\
Department of Mathematics, Graduate School of Science, Hiroshima University\\
1-3-1 Kagamiyama, Higashi-Hiroshima, 739-8526 Japan\\
Email address: {\tt hira@hiroshima-u.ac.jp}

\end{document}